\let\euscr\mathscr \let\mathscr\relax
\def\@seccntformat#1{%
  \protect\textup{%
    \protect\@secnumfont
    \expandafter\protect\csname format#1\endcsname
    \csname the#1\endcsname
    \protect\@secnumpunct
  }
}
\theoremstyle{plain}
\newtheorem{Theorem}{Theorem}[section]
\newtheorem{Lemma}[Theorem]{Lemma}
\newtheorem{Proposition}[Theorem]{Proposition}
\newtheorem{Corollary}[Theorem]{Corollary}
\newtheorem{Theoremsub}{Theorem}[subsection]
\theoremstyle{definition} 
\newtheorem{Definition}[Theorem]{Definition}
\newtheorem{Remark}[Theorem]{Remark}
\newtheorem{Example}[Theorem]{Example}
\newtheorem{Examplesub}[Theoremsub]{Example}
\newcommand{\N}{{\mathbb{N}}}
\newcommand{\Q}{{\mathbb{Q}}}
\newcommand{\R}{{\mathbb{R}}}
\newcommand{\eR}{{\overline{\R}}}
\newcommand{\inff}{\textsf{\scriptsize ---}}
\newcommand{\SL}{\mathsf{S}}
\newcommand{\Cong}{\euscr{C}}
\newcommand{\powerset}{\euscr{P}(X)}
\newcommand{\Cd}{\mathscr{B}}
\newcommand{\tbigcup}{\mathop{\textstyle \bigcup }}
\newcommand{\tbigvee}{\mathop{\textstyle \bigvee }}
\newcommand{\tbigwedge}{\mathop{\textstyle \bigwedge }}
\newcommand{\tsum}{\mathop{\textstyle \sum }}
\newcommand{\subscript}[2]{$#1 _ #2$}
\newcommand{\sframes}{\mathsf{\sigma Frm}}
\newcommand{\reals}{\mathfrak{L}(\R)}
\newcommand{\ereals}{\mathfrak{L}(\overline{\R})}
\newcommand{\M}{\mathsf{M}}
\newcommand{\SM}{\mathsf{SM}}
\newcommand{\F}{\mathsf{F}}
\newcommand{\eM}{\overline{\mathsf{M}}}
\newcommand{\eF}{\overline{\mathsf{F}}}
\newcommand{\irchi}[2]{\raisebox{\depth}{$#1\chi$}}
\DeclareRobustCommand{\rchi}{{\mathpalette\irchi\relax}}
\begin{document}

\title[Lebesgue integration on {\large$\sigma$}-locales]{Lebesgue integration on {\Large$\sigma$}-locales:\\[2mm] simple functions}

\author[Raquel Bernardes]{Raquel Bernardes}

\address{\hspace*{-\parindent}CMUC, Department of Mathematics, University of Coimbra,  3000-143 Coimbra, Portugal \newline {\it Email address}: {\tt rbernardes@mat.uc.pt}}

\subjclass[2020]{06D22, 18F70, 28E99}

\keywords{$\sigma$-frame, $\sigma$-locale, $\sigma$-frame congruence, $\sigma$-sublocale, measure, measurable function, measurable simple function, integral of simple functions.}

\begin{abstract}
This paper presents a point-free version of the Lebesgue integral for simple functions on $\sigma$-locales. It describes the integral with respect to a measure defined on the coframe of all $\sigma$-sublocales, moving beyond the constraints of Boolean algebras. It also extends the notion of integrable function, usually reserved for measurable functions, to localic general functions.
\end{abstract}

\maketitle

\section{Introduction}

The concept of a ``point-free'' integral is not new in the literature. In 1965, Segal \cite{Segal1965} proposed an ``algebraic integration theory'' (i.e., concerned with features independent of isomorphisms), whose starting point is an algebraic structure with no underlying space of points.
More recently, Coquand-Palmgren \cite{CoquandPalmgren2002}, Coquand-Spitters \cite{CoquandSpitters2009} and Vickers \cite{Vickers2008} gave a constructive account of measure theory. They approached the real numbers constructively, working with lower reals, upper reals and Dedekind reals.
With a more categorical perspective, Kriz-Pultr \cite{KrizPultr2014}, Ball-Pultr \cite{BallPultr2018} and Jakl \cite{Jakl2018} proposed generalisations of measure theory to abstract $\sigma$-algebras (that is, Boolean algebras with countable joins). They took morphisms of abstract $\sigma$-algebras as measurable functions. Then, in that framework, they focused on the Daniell's version of Lebesgue integral (for an account of Daniell's integral see \cite{Pultr2010}).

The novelty of our approach in this paper
lies in our effort to move beyond the constraints of Boolean algebras and of a specified notion of ``measurability''. We aim to describe the integral with respect to measures defined on coframes and to extend the concept of integrable function (usually reserved for measurable functions) to arbitrary functions.

Just as locales generalise a substantial part of topological spaces (\cite{PicadoPultr2012}), $\sigma$-locales generalise a substantial part of  measurable spaces (\cite{BaboolalGhosh2008}). Motivated by this, Simpson  proposed in \cite{Simpson2012} a new approach to the problem of measuring subsets. Replacing ``subsets'' by ``$\sigma$-sublocales'' (which actually generalise the former when a space is viewed as a $\sigma$-locale), Simpson extended a measure $\mu$ on a $\sigma$-locale $L$ to a measure $\mu^{\diamond}$ on the coframe $\SL(L)$ of all $\sigma$-sublocales of $L$. Remarkably, this approach not only overcomes some well-known classic paradoxes (as the ones of Vitali \cite{Vitali1905} and Banach-Tarski \cite{BanachTarski1924}), but it also shows that there is no need to introduce any formal notion of measurability. Since $\mu^{\diamond}$ assigns a value to every $\sigma$-sublocale of $L$, it makes sense to assume that every $\sigma$-sublocale of $L$ is “measurable”.

In \cite{Bernardes2023}, we studied measurable functions on a $\sigma$-locale (previously mentioned by Banaschewski-Gilmour \cite{BanaschewskiGilmour2011} as $\sigma$-continuous maps). A {\em measurable function} on a $\sigma$-locale $L$ is a $\sigma$-frame homomorphism $f\colon \reals\to L$ from the frame of reals into $L$. General real-valued functions on $L$ are measurable functions on $\Cong(L)=\SL(L)^{op}$, the {\em congruence frame} of $L$.

Our aim now is  to develop a theory of integration for general real-valued functions in a $\sigma$-locale $L$  by defining the integral with respect to a measure defined on all of $\SL(L)$.
This paper is the first step on that direction: we present a point-free version of the Lebesgue integral for localic simple functions (regardless of being measurable or not).
In order to work with simple functions, we will have to introduce the limit superior and the limit inferior of a sequence of measurable functions.
This will require some investigation of the behaviour of countable joins and meets in the ring $\M(L)$ of measurable functions on a $\sigma$-locale $L$.

In a subsequent paper (\cite{Bernardes_inpreparation}) we intend to use Corollary \ref{DecompositionNonnegativeMeasFunction} (that states that under certain conditions a nonnegative function on $L$ can be written as a limit of simple functions) to extend the integral to more general functions.

\medskip
The paper is structured as follows. In Section 2, we review some essential definitions and results about $\sigma$-locales, $\sigma$-frames and the ring of measurable functions. Then, we study the existence of countable joins and countable meets of measurable functions (Section 3), in order to establish the notions of limit superior, limit inferior and limit of a sequence of measurable functions in Section 4. In Section 5, we present the ring of measurable simple functions,  and we show in Section 6 that any nonnegative measurable function on $L$ can be written as a limit of an increasing sequence of nonnegative simple functions. Then, in Section 7, we introduce the integral of simple functions and study its elementary properties. Firstly, we focus on  nonnegative simple functions; and later (Section 8), we extend it to general simple functions. In Section 9, we prove that the indefinite integral of a nonnegative simple function is a measure on $\SL(L)$. Finally, in Section 10, we show that our point-free integral generalises the classic Lebesgue integral of simple functions. We close the paper with a final comment on the integration of more general functions on $L$.

\section{Background}

Our general reference for point-free topology and lattice theory is Picado-Pultr \cite{PicadoPultr2012}. For $\sigma$-frames ($\sigma$-locales) and congruences on $\sigma$-frames, we use  Madden \cite{Madden1991} and Frith-Schauerte \cite{FrithSchauerte2018}. We follow our previous paper \cite{Bernardes2023} for measurable functions on $\sigma$-locales. And finally, for general classic measure theory and, in particular, simple functions and integrable simple functions, our main references are Halmos \cite{Halmos1950},  Evans-Gariepy \cite{EvansGariepy2015} and Bartle \cite{Bartle1995}.

\subsection{Frames and locales}

A {\em frame} is a complete lattice $L$ (with bottom 0 and top 1) satisfying the distributive law
\[
\big(\tbigvee_{a\in A}a\big)\wedge b=\tbigvee_{a\in A}(a\wedge b)
\]
for every $A\subseteq L$ and $b\in L$. Equivalently, it is a complete Heyting algebra, with the Heyting implication given by $a\to b=\bigvee\{x\in L\mid a\wedge x\le b\}$ and pseudocomplements given by $a^{*}=a\to 0=\bigvee\{x\in L\mid a\wedge x=0\}$.
Whenever the pseudocomplement of an element $a\in L$ is the complement of $a$, we shall denote it by $a^{c}$.
A {\em frame homomorphism} is a map between frames that
preserves finite meets and arbitrary joins. 

Frames and frame
homomorphisms form a category that will be denoted by \textsf{Frm}. The category of {\em locales} and {\em localic maps} is the opposite category  $\textsf{Loc}=\textsf{Frm}^{op}$.

\subsection{\texorpdfstring{$\sigma$}{[sigma]}-Frames and \texorpdfstring{$\sigma$}{[sigma]}-locales}

A lattice $L$ is {\it join-$\sigma$-complete} if it has joins of all countable $A\subseteq L$.
A join-$\sigma$-complete lattice is a {\it $\sigma$-frame} \cite{Banaschewski1980,Madden1991}  if it satisfies the distributive law
\[
\big(\tbigvee_{a\in A}a\big)\wedge b=\tbigvee_{a\in A}(a\wedge b)
\]
for every countable $A\subseteq L$ and $b\in L$. A {\em $\sigma$-frame homomorphism} is a map between $\sigma$-frames that preserves finite meets and countable joins.
The category of $\sigma$-frames and $\sigma$-frame homomorphisms will be denoted by $\sigma$\textsf{Frm}.

The category of $\sigma$-{\em locales} and $\sigma$-{\em localic maps} is the opposite category $\sigma\textsf{Loc}=\sigma\textsf{Frm}^{op}$.
A remarkable difference between \textsf{Loc} and $\sigma$\textsf{Loc} lies in the fact that while the subobjects of a locale $L$ in $\textsf{Loc}$, refered to as \emph{sublocales}, have a useful concrete description as subsets of $L$ (see \cite{PicadoPultr2012}), a similar description is not possible in $\sigma$\textsf{Loc}.
Indeed, a subobject $S$ of an object $L$ in $\sigma$\textsf{Loc}, that we refer to as a {\em $\sigma$-sublocale}, can only be described by a $\sigma$-frame quotient $L/\theta_S$ given by a {\em $\sigma$-frame congruence} $\theta_S$ on $L$, that is, an equivalence relation on $L$ satisfying the congruence properties
\begin{enumerate}
\item[(C1)] $(x,y), (x',y')\in \theta_S \ \Rightarrow \ (x\wedge x', y\wedge y')\in \theta_S$,
\item[(C2)] $\displaystyle(x_a,y_a)\in \theta_S \ (a\in A, A= \mbox{countable}) \ \Rightarrow \ \big(\tbigvee_{a\in A} x_a,\tbigvee_{a\in A} y_a\big)\in \theta_S$.
\end{enumerate}

The set $\Cong(L)$ of all congruences on a $\sigma$-frame $L$, ordered by inclusion, is a frame \cite{Madden1991}.
Hence  the dual lattice
$\SL(L)$ of all $\sigma$-sublocales of $L$, equipped with the partial order
\[
S\le T\textrm{ if and only if }\theta_T\subseteq\theta_S,
\]
is a coframe. Given a complemented $\sigma$-sublocale $S$, its complement $S^{c}$ is precisely the $\sigma$-sublocale defined by $\theta_S^{c}$.

The {\em open} and {\em closed} $\sigma$-sublocales associated with an element $a\in L$ are the $\sigma$-sublocales $\mathfrak{o}(a)$ and $\mathfrak{c}(a)$ represented, respectively, by the open and closed congruences
\begin{align*}
\Delta_a&\coloneqq\{(x,y)\in L\times L\mid x\wedge a=y\wedge a\}\\[2mm]
\nabla_a&\coloneqq\{(x,y)\in L\times L\mid x\vee a=y\vee a\}.
\end{align*}
They are complemented to each other in $\Cong(L)$.

Setting $\Delta[L]\coloneqq\{\Delta_a\mid a\in L\}$ and $\nabla[L]\coloneqq\{\nabla_a\mid  a\in L\}$, the map $\nabla\colon L\to\nabla[L]$ is an
isomorphism of $\sigma$-frames (an embedding of $L$ in $\Cong(L)$) while $\Delta\colon L^{op}\to\Delta[L]$ is an isomorphism of $\sigma$-coframes (an embedding of $L$ in $\SL(L)$, with $L$ isomorphic to $\mathfrak{o}[L]\coloneqq\{\mathfrak{o}(a)\mid a\in L\}$).

In particular,
\begin{align*}
0_{\Cong(L)}&=\nabla_{0_L}=\Delta_{1_L}=\{(x,y)\in L\times L\mid x=y\},\\
1_{\Cong(L)}&=\nabla_{1_L}=\Delta_{0_L}= L\times L,
\end{align*}
while $1_{\SL(L)}=L/0_{\Cong(L)}$ and $0_{\SL(L)}=L/1_{\Cong(L)}$ are isomorphic to $L$ and $\{0=1\}$, respectively.

\subsection{The frames of reals and of extended reals}

The {\em frame of reals} \cite{Banaschewski1997} is the frame $\reals$ generated by elements $(p,\inff)$ and $(\inff,q)$, with $p,q\in\Q$, subject to the  relations
\begin{enumerate}[label=(\subscript{R'}{{\arabic*}})]
\item $(p,\inff)\wedge(\inff,q)=0$ whenever $p\geq q$;
\item $(p,\inff)\vee(\inff,q)=1$ whenever $p<q$;
\item $(p,\inff)=\tbigvee\{(r,\inff)\mid p<r\}$;
\item $(\inff,q)=\tbigvee\{(\inff,s)\mid s<q\}$;
\item $1=\tbigvee\{(p,\inff)\mid p\in\Q\}$;
\item $1=\tbigvee\{(\inff,q)\mid q\in\Q\}$.
\end{enumerate}

The frame $\ereals$ of {\em extended reals} \cite{BanaschewskiGarciaPicado2012}  is the frame generated by all $(p,\inff)$ and $(\inff,q)$, with $p,q\in\Q$, subject to the relations
\begin{enumerate}[label=(\subscript{R'}{{\arabic*}})]
\item $(p,\inff)\wedge(\inff,q)=0$ whenever $p\geq q$;
\item $(p,\inff)\vee(\inff,q)=1$ whenever $p<q$;
\item $(p,\inff)=\tbigvee\{(r,\inff)\mid p<r\}$;
\item $(\inff,q)=\tbigvee\{(\inff,s)\mid s<q\}$.
\end{enumerate}

Since the relations involved only deal with countable joins and any countably generated $\sigma$-frame $L$ is automatically a frame, the frame $\reals$ (resp. $\ereals$) corresponds to the $\sigma$-frame defined
by the same generators and relations (see \cite{BanaschewskiGilmour2011,Simpson2012} for more details).

A map from the generating set of $\reals$ (resp. $\ereals$) into a $\sigma$-frame $L$ defines a $\sigma$-frame homomorphism $f\colon \reals\to L$ (resp. $f\colon\ereals\to L$) if and only if it sends the relations of $\reals$ (resp. $\ereals$) into identities in $L$.

\subsection{Rings of measurable functions}\label{background_MeasFunctions}

A {\em measurable real function} (\cite{Bernardes2023}) on a $\sigma$-frame $L$ is a $\sigma$-frame homomorphism $f\colon \reals\to L$ . These functions were originally introduced by Banaschewski-Gilmour \cite{BanaschewskiGilmour2011} as {\em continuous real-valued functions} for $\sigma$-frames.
Similarly, a {\em measurable extended real function} on $L$ is a $\sigma$-frame homomorphism $f\colon \ereals\to L$. 

We denote by $\M(L)$ and $\eM(L)$ the sets of all measurable real functions and all measurable extended real functions on $L$, respectively. $\M(L)$ and $\eM(L)$ are both partially ordered by
\begin{align*}
f\leq g\ &\equiv \ \forall p\in\Q,  \  f(p,\inff)\leq g(p,\inff)\\
&\Leftrightarrow \ \forall q\in\Q,  \ g(\inff,q)\leq f(\inff,q).
\end{align*}

We say that an $f\in\eM(L)$ is {\em finite} if $$\tbigvee\{f(p,\inff)\mid p\in\Q\}=1=\tbigvee\{f(\inff,q)\mid q\in\Q\}.$$  Finite measurable extended real functions are clearly in a one-to-one correspondence with  $\sigma$-frame homomorphisms $f\colon\reals\to L$. Thus, we can regard
$\M(L)$ as the subset $\{f\in\eM(L)\mid f\textrm{ is finite}\}$ of $\eM(L)$.

We will also deal with the sets
\begin{align*}
\F(L)\coloneqq\M(\Cong(L))=\sframes(\reals,\Cong(L))\\
\textrm{and }\ \ \eF(L)\coloneqq\eM(\Cong(L))=\sframes(\ereals,\Cong(L))
\end{align*}
of all arbitrary real functions and arbitrary extended real functions on a $\sigma$-frame $L$.
Identifying each $f\in\eM(L)$ with $\nabla\circ f\in\eF(L)$, we have $\eM(L)\subseteq\eF(L)$. Moreover, an $f\colon \ereals\to\Cong(L)$ is measurable on $L$ if
\[f(p,\inff),\ f(\inff,q)\in\nabla[L]\ \ \mbox{ for all }p,q\in\Q.\]

\smallskip

In \cite{Bernardes2023}, we described some of the  algebraic operations in $\eM(L)$. Here we outline the ones that we will use throughout the paper.

\begin{enumerate}
\item
Let $0<\lambda\in\Q$ and $f,g\in\eM(L)$. Then, for each $p,q\in\Q$, we have:

\begin{enumerate}
\item $(\lambda\cdot f)(p,\inff)=f(\tfrac{p}{\lambda},\inff)$ and $(\lambda\cdot f)(\inff,q)=f(\inff,\tfrac{q}{\lambda})$;
\item $-f(p,\inff)=f(\inff,-p)$ and $-f(\inff,q)=f(-q,\inff)$;
\item $(f\vee g)(p,\inff)=f(p,\inff)\vee g(p,\inff)$ and $(f\vee g)(\inff,q)=f(\inff,q)\wedge g(\inff,q)$;
\item $(f\wedge g)(p,\inff)=f(p,\inff)\wedge g(p,\inff)$ and $(f\wedge g)(\inff,q)=f(\inff,q)\vee g(\inff,q)$.
\end{enumerate}
\item
Moreover, if $f,g\in\M(L)$, then
$\displaystyle(f+g)(\inff,q)=\tbigvee_{t\in\Q}(f(\inff,t)\wedge g(\inff,q-t))$, and
$\displaystyle(f+g)(p,\inff)=\tbigvee_{t\in\Q}(f(t,\inff)\wedge g(p-t,\inff)).$

\item
If $f,g\in\M(L)$ are such that  $\pmb{0}\leq f\wedge g$, then
\begin{gather*}
(f\cdot g)(p,\inff) =
  \begin{cases}
      1 &\textrm{if }p<0\\
      \tbigvee_{s>0}f(s,\inff)\wedge g(\tfrac{p}{s},\inff) &\textrm{if }p\geq 0, \\
  \end{cases}\\
(f\cdot g)(\inff,q) =
  \begin{cases}
      0 &\textrm{if }q\leq 0 \\
      \tbigvee_{s>0}f(\inff,s)\wedge g(\inff,\tfrac{q}{s}) &\textrm{if }q>0.
  \end{cases}
\end{gather*}

\end{enumerate}

For each $r\in\Q$, we also have a nullary operation $\textbf{r}$ defined by
\[
\textbf{r}(p,\inff) =
  \begin{cases}
      1 & \textrm{if }p< r \\
      0 & \textrm{if }p\geq r
  \end{cases}
\quad\mbox{and}\quad
\textbf{r}(\inff,q) =
  \begin{cases}
      0 & \textrm{if }q\leq r \\
      1 & \textrm{if }q>r.
  \end{cases}
\]
\smallskip

An $f\in\M(L)$ is nonnegative if $f\ge \pmb{0}$. Given an $f\in\M(L)$, we define the positive part of $f$, the negative part of $f$ and the modulus of $f$, respectively, by
\[f^{+}\coloneqq f\vee\pmb{0},\qquad f^{-}\coloneqq(-f)\vee\pmb{0}\,\,\,\textrm{ and }\,\,\,\,|f|\coloneqq f^{+}+f^{-}.\]
Furthermore, for any $f\in\M(L)$, we have 
\[f=f^{+}-f^{-}.\]

\subsection{\texorpdfstring{$\sigma$}{[sigma]}-scales in \texorpdfstring{$\sigma$}{[sigma]}-frames}\label{sigma_scales}

In \cite{Bernardes2023}, we have shown that measurable functions can be produced via $\sigma$-scales as follows.

Let $\varphi\colon \Q\to L$ be a {\em $\sigma$-scale}, that is, a map for which there is a family $(c_r)_{r\in\Q}$ of elements of $L$ such that $\varphi(s)\wedge c_r=0$ whenever $s\leq r$, and $c_r\vee\varphi(s)=1$ whenever $r<s$. Then there is a  measurable extended real function $f\colon \ereals\to L$ determined by
\[
f(p,\inff)=\tbigvee_{r>p}c_r\textrm{ and }f(\inff,q)=\tbigvee_{r<q}\varphi(r)\ \ \ \textrm{for all }p,q\in\Q.
\]
If the $\sigma$-scale $\varphi$ is {\em finite}, that is, $\tbigvee\{\varphi(r)\mid r\in\Q\}=1=\tbigvee\{c_r\mid r\in\Q\}$, then the corresponding function $f$ is  finite.

\begin{Examplesub}\label{Constant_function}
\textit{Extended constant functions.}
For each $r\in\Q\cup\{\pm\infty\}$, let $\varphi_r(s)=0$ if $s\leq r$ and $\varphi_r(s)=1$ if $s>r$. The map $\varphi_r\colon \Q\to L$ is a $\sigma$-scale in $L$ that generates the measurable function $\textbf{r}\colon \ereals\to L$ given by the formulas
\[
\textbf{r}(p,\inff) =
  \begin{cases}
      1 & \textrm{if }p< r \\
      0 & \textrm{if }p\geq r
  \end{cases}
\quad\mbox{and}\quad
\textbf{r}(\inff,q) =
  \begin{cases}
      0 & \textrm{if }q\leq r \\
      1 & \textrm{if }q>r.
  \end{cases}
\]
We call this the \emph{extended constant function associated with $r$}.
If $r\in\Q$, the extended constant function $\textbf{r}$ is finite, and we simply refer to it as a {\em constant function}.
\end{Examplesub}

\begin{Examplesub}\label{Characteristic_function}
\textit{Characteristic functions.}
For each complemented $a\in L$, consider the finite $\sigma$-scale defined by $\varphi_a(r)=0$ if $r\leq 0$, $\varphi_a(r)=a^{c}$ if $0<r\leq 1$ and $\varphi_a(r)=1$ if $r>1$. The finite measurable function $$\rchi_a\colon \reals\to L$$ generated by $\varphi_a$ is given by
\[
\rchi_a(p,\inff) =
  \begin{cases}
      1 & \textrm{if }p<0\\
      a & \textrm{if }0\leq p<1 \\
      0 & \textrm{if }p\geq 1
  \end{cases}
\quad\mbox{and}\quad
  \rchi_a(\inff,q) =
  \begin{cases}
      0 & \textrm{if }q\leq 0 \\
      a^{c} & \textrm{if }0<q\leq 1 \\
      1 & \textrm{if }q>1.
  \end{cases}
\]
This is  the \emph{characteristic function associated with $a\in L$}.
\end{Examplesub}

\section{Countable joins and meets}

In the previous section we mentioned that $\eM(L)$ is closed under binary meets and binary joins. We shall now study the behaviour of countable joins and countable meets. (A similar study in $\F(L)$, for a frame $L$, was previously done in \cite{GarciaPicado2011}.)  Throughout this paper, the join and the meet of a sequence $(f_n)_{n\in\N}$ in $\eM(L)$ will be denoted by $\sup_{n\in\N}f_n$ and $\inf_{n\in\N}f_n$, respectively.
\smallskip

Let $L$ be a $\sigma$-frame and consider a sequence $(f_n\colon \ereals\to L)_{n\in\N}$ in $\eM(L)$.

\begin{Proposition}\label{Countable_meets}
If $\tbigvee_{n\in\N}f_n(\inff,q)$ is complemented in $L$ for every $q\in\Q$, then $\inf_{n\in\N}f_n$ exists in $\eM(L)$, with
\begin{align*}
(\inf_{n\in\N}f_n)(\inff,q)&=\tbigvee_{n\in\N}f_n(\inff,q),\\
(\inf_{n\in\N}f_n)(p,\inff)&=\tbigvee_{r>p}\big(\tbigvee_{n\in\N}f_n(\inff,r)\big)^{c}
\end{align*}
for all $p,q\in\Q$. Moreover, if each $f_n\in\M(L)$ and
\[\tbigvee_{q\in\Q}\big(\tbigvee_{n\in\N}f_n(\inff,q)\big)^{c}=1,\]
then $\inf_{n\in\N}f_n\in\M(L)$.
\end{Proposition}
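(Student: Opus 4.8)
The plan is to realise the candidate infimum as the measurable function generated by an explicit $\sigma$-scale, and then to read off the universal property of the infimum directly from the $(\inff,q)$-side of the order relation. The complementation hypothesis enters exactly once, and decisively: it is what allows us to build the companion family $(c_r)$ needed to produce a legitimate element of $\eM(L)$.

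First I would set $\varphi(r)\coloneqq\tbigvee_{n\in\N}f_n(\inff,r)$ for each $r\in\Q$ and, using that each such join is complemented, put $c_r\coloneqq\varphi(r)^{c}$. Since $q\mapsto f_n(\inff,q)$ is monotone for every $n$, so is $\varphi$; hence for $s\le r$ one gets $\varphi(s)\wedge c_r\le\varphi(r)\wedge\varphi(r)^{c}=0$, and for $r<s$ one gets $c_r\vee\varphi(s)\ge\varphi(r)^{c}\vee\varphi(r)=1$. Thus $\varphi$ is a $\sigma$-scale with associated family $(c_r)_{r\in\Q}$, so by the construction in Subsection~\ref{sigma_scales} it generates an $f\in\eM(L)$ with
\[
f(\inff,q)=\tbigvee_{r<q}\varphi(r)=\tbigvee_{n\in\N}f_n(\inff,q)
\quad\text{and}\quad
f(p,\inff)=\tbigvee_{r>p}c_r=\tbigvee_{r>p}\big(\tbigvee_{n\in\N}f_n(\inff,r)\big)^{c},
\]
where the first equality uses that each $f_n$ preserves the join in relation (R$'_4$). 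These are precisely the asserted formulas.

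It then remains to check that $f=\inf_{n\in\N}f_n$ in $\eM(L)$, and here the $(\inff,q)$-characterization of the order does all the work. Since $f(\inff,q)=\tbigvee_{m}f_m(\inff,q)\ge f_n(\inff,q)$ for every $n$ and $q$, we get $f\le f_n$, so $f$ is a lower bound. Conversely, if $g\in\eM(L)$ satisfies $g\le f_n$ for all $n$, then $f_n(\inff,q)\le g(\inff,q)$ for all $n,q$, and taking the join over $n$ gives $f(\inff,q)=\tbigvee_n f_n(\inff,q)\le g(\inff,q)$, i.e.\ $g\le f$. Hence $f$ is the greatest lower bound, as required.

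For the final assertion I would verify the two finiteness conditions directly. Commuting countable joins and using the finiteness of each $f_n$ gives $\tbigvee_{q}f(\inff,q)=\tbigvee_{n}\tbigvee_{q}f_n(\inff,q)=\tbigvee_{n}1=1$, while $\tbigvee_{p}f(p,\inff)=\tbigvee_{p}\tbigvee_{r>p}c_r=\tbigvee_{r}c_r=\tbigvee_{r}\big(\tbigvee_{n}f_n(\inff,r)\big)^{c}$, which equals $1$ by the added hypothesis; thus $f$ is finite and $\inf_{n\in\N}f_n\in\M(L)$. I expect the only genuinely delicate point to be the verification of the $\sigma$-scale axioms: recognising that complementedness of $\tbigvee_{n}f_n(\inff,r)$ is exactly what is needed to produce $(c_r)$ and to force the two scale identities. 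Everything afterwards is bookkeeping with countable joins and the given order characterization.
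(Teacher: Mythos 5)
Your proof is correct and follows essentially the same route as the paper's: the same $\sigma$-scale $\varphi(r)=\tbigvee_{n\in\N}f_n(\inff,r)$ with $c_r=\varphi(r)^{c}$, the same derivation of the two formulas via the construction of Subsection on $\sigma$-scales, and the same finiteness computation for the ``moreover'' clause. The only difference is that you explicitly verify the greatest-lower-bound property through the $(\inff,q)$-characterization of the order, a step the paper's proof asserts without detail --- a useful addition, not a divergence.
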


\begin{proof}
Define $\varphi(r)\coloneqq\tbigvee_{n\in\N}f_n(\inff,r)$ for each $r\in\Q$. Set $c_r\coloneqq\varphi(r)^{c}$. Note that $\varphi\colon \Q\to L$ is an increasing map. Thus, $\varphi(r)\wedge c_s=\varphi(r)\wedge\varphi(s)^{c}=0$ for any $r\leq s$, and  $c_s\vee\varphi(r)=\varphi(s)^{c}\vee\varphi(r)=1$ otherwise. As a result, $\varphi$ is a $\sigma$-scale in $L$, and the corresponding extended real-valued function is the meet of $\{f_n\mid n\in\N\}$. From Section \ref{sigma_scales}, it is given by
\begin{align*}
(\inf_{n\in\N}f_n)(\inff,q)&=\tbigvee_{r<q}\varphi(r)=\tbigvee_{r<q}\tbigvee_{n\in\N}f_n(\inff,r)=\tbigvee_{n\in\N}\tbigvee_{r<q}f_n(\inff,r)=\tbigvee_{n\in\N}f_n(\inff,q),\\
(\inf_{n\in\N}f_n)(p,\inff)&=\tbigvee_{r>p}c_r=\tbigvee_{r>p}\big(\tbigvee_{n\in\N}f_n(\inff,r)\big)^{c}.
\end{align*}

If each $f_n$ is finite and $\tbigvee_{q\in\Q}\big(\tbigvee_{n\in\N}f_n(\inff,q)\big)^{c}=1$, we have
\begin{align*}
\tbigvee_{p\in\Q}(\inf_{n\in\N}f_n)(p,\inff)&=\tbigvee_{p\in\Q}\tbigvee_{r>p}\big(\tbigvee_{n\in\N}f_n(\inff,r)\big)^{c}=\tbigvee_{r\in\Q}\big(\tbigvee_{n\in\N}f_n(\inff,r)\big)^{c}=1\,\,\textrm{ and}\\
\tbigvee_{q\in\Q}(\inf_{n\in\N}f_n)(\inff,q)&=\tbigvee_{q\in\Q}\tbigvee_{n\in\N}f_n(\inff,q)=\tbigvee_{n\in\N}\tbigvee_{q\in\Q}f_n(\inff,q)=1.\qedhere
\end{align*}
\end{proof}

\begin{Corollary}
If $(f_n)_{n\in\N}$ is a sequence in $\M(L)$ such that $\tbigvee_{n\in\N}f_n(\inff,q)$ is complemented for every $q\in\Q$ and there is a $g\in\M(L)$ such that $g\leq f_n$ for each $n\in\N$, then $\inf_{n\in\N}f_n$ exists in $\M(L)$.
\end{Corollary}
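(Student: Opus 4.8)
The plan is to read this off Proposition \ref{Countable_meets}. Since $\tbigvee_{n\in\N}f_n(\inff,q)$ is assumed complemented for every $q\in\Q$, that proposition already guarantees that $\inf_{n\in\N}f_n$ exists in $\eM(L)$, with $(\inf_{n\in\N}f_n)(\inff,q)=\tbigvee_{n\in\N}f_n(\inff,q)$ and $(\inf_{n\in\N}f_n)(p,\inff)=\tbigvee_{r>p}\big(\tbigvee_{n\in\N}f_n(\inff,r)\big)^{c}$. It therefore remains only to verify the single additional hypothesis of the ``moreover'' part, namely the finiteness condition $\tbigvee_{q\in\Q}\big(\tbigvee_{n\in\N}f_n(\inff,q)\big)^{c}=1$; granting this, the proposition immediately delivers $\inf_{n\in\N}f_n\in\M(L)$. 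So the entire content of the corollary is to show that the lower bound $g$ forces this finiteness condition, and that is the step I expect to be the only real obstacle.

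Writing $a_q\coloneqq\tbigvee_{n\in\N}f_n(\inff,q)$, I would first unwind the hypothesis $g\le f_n$ using the description of the order on $\eM(L)$: it says exactly that $f_n(\inff,q)\le g(\inff,q)$ for all $n\in\N$ and all $q\in\Q$, and taking the join over $n$ gives $a_q\le g(\inff,q)$. The key idea is then to combine this with the defining relation of $\reals$ to bound each $a_p^{c}$ from below by $g(p,\inff)$. Applying the $\sigma$-frame homomorphism $g$ to the relation $(p,\inff)\wedge(\inff,p)=0$ (which holds in $\reals$ since $p\ge p$) yields $g(p,\inff)\wedge g(\inff,p)=0$; combined with $a_p\le g(\inff,p)$ this gives $g(p,\inff)\wedge a_p=0$. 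Because $a_p$ is complemented, its pseudocomplement coincides with $a_p^{c}$, so $g(p,\inff)\wedge a_p=0$ upgrades to $g(p,\inff)\le a_p^{c}$.

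Joining over $p\in\Q$ and invoking the finiteness of $g$ then closes the argument:
\[1=\tbigvee_{p\in\Q}g(p,\inff)\le\tbigvee_{p\in\Q}a_p^{c}\le\tbigvee_{q\in\Q}\big(\tbigvee_{n\in\N}f_n(\inff,q)\big)^{c}\le 1,\]
so the required condition holds and $\inf_{n\in\N}f_n\in\M(L)$. The one point deserving care is the passage from $g(p,\inff)\wedge a_p=0$ to $g(p,\inff)\le a_p^{c}$: this is precisely where the complementedness hypothesis on $\tbigvee_{n\in\N}f_n(\inff,q)$ is used, since in a general frame the meet-zero condition only yields domination by the pseudocomplement. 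Everything else is a routine application of Proposition \ref{Countable_meets}, so I anticipate no further difficulties.
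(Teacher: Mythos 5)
Your proposal is correct and follows essentially the same route as the paper: both verify the finiteness hypothesis $\tbigvee_{q\in\Q}\big(\tbigvee_{n\in\N}f_n(\inff,q)\big)^{c}=1$ of the ``moreover'' part of Proposition \ref{Countable_meets} by using $g\le f_n$ together with $g(r,\inff)\wedge g(\inff,r)=0$ to get $g(r,\inff)\wedge\tbigvee_{n}f_n(\inff,r)=0$, upgrading this to $g(r,\inff)\le\big(\tbigvee_{n}f_n(\inff,r)\big)^{c}$ via complementedness, and then joining over $r$ and invoking the finiteness of $g$. Your extra care about pseudocomplement versus complement is a sound (and slightly more explicit) justification of the same step the paper performs tacitly.
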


\begin{proof}
From $f_n(\inff,r)\leq g(\inff,r)$ for all $r\in\Q$, it follows that
\[g(r,\inff)\wedge\tbigvee_{n\in\N}f_n(\inff,r)\leq\tbigvee_{n\in\N}\big(g(r,\inff)\wedge g(\inff, r)\big)=0.\]
Hence $g(r,\inff)\leq\big(\tbigvee_{n\in\N}f_n(\inff,r)\big)^{c}$ and
\[1=\tbigvee_{r\in\Q}g(r,\inff)\leq\tbigvee_{r\in\Q}\big(\tbigvee_{n\in\N}f_n(\inff,r)\big)^{c}.\qedhere\]
\end{proof}

\begin{Corollary}\label{Countable_meets_cor2}
$\eF(L)$ is closed under countable meets  in $\eM(L)$.
\end{Corollary}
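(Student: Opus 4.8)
The plan is to deduce this from Proposition~\ref{Countable_meets} applied to the congruence frame $\Cong(L)$, which is itself a frame and hence a $\sigma$-frame. Recall that under the identification $\eM(L)\subseteq\eF(L)=\eM(\Cong(L))$ each $f\in\eM(L)$ is regarded as the $\sigma$-frame homomorphism $\nabla\circ f\colon\ereals\to\Cong(L)$, so that $(\nabla\circ f)(\inff,q)=\nabla_{f(\inff,q)}$ for every $q\in\Q$. Given a sequence $(f_n)_{n\in\N}$ in $\eM(L)$, I would therefore pass to the corresponding sequence $(\nabla\circ f_n)_{n\in\N}$ in $\eM(\Cong(L))$ and verify that it satisfies the hypothesis of Proposition~\ref{Countable_meets} inside the frame $\Cong(L)$.

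The key step is the computation of the relevant join. Since $\nabla\colon L\to\Cong(L)$ is a $\sigma$-frame homomorphism, it preserves countable joins, so for each $q\in\Q$ we have
\[
\tbigvee_{n\in\N}(\nabla\circ f_n)(\inff,q)=\tbigvee_{n\in\N}\nabla_{f_n(\inff,q)}=\nabla_{\,\tbigvee_{n\in\N}f_n(\inff,q)}.
\]
Because every closed congruence $\nabla_a$ is complemented in $\Cong(L)$, with complement the open congruence $\Delta_a$, this join is complemented. Hence the hypothesis of Proposition~\ref{Countable_meets} is met for the sequence $(\nabla\circ f_n)_{n\in\N}$ in $\Cong(L)$, and the Proposition yields $\inf_{n\in\N}(\nabla\circ f_n)\in\eM(\Cong(L))=\eF(L)$. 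This shows that the countable meet of $(f_n)_{n\in\N}$ exists in $\eF(L)$, that is, $\eF(L)$ is closed under countable meets of sequences from $\eM(L)$.

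The point I would stress is exactly where the complementation comes from. For a sequence in $\eM(L)$, Proposition~\ref{Countable_meets} need not apply inside $L$ itself, since the join $\tbigvee_{n\in\N}f_n(\inff,q)$ may well fail to be complemented in the $\sigma$-frame $L$, so that $\inf_{n\in\N}f_n$ need not exist in $\eM(L)$. The gain in passing to $\Cong(L)$ is that the image of an \emph{arbitrary} countable join under $\nabla$ is complemented there, its complement being the corresponding open congruence $\Delta$, independently of whether the join itself is complemented in $L$. Verifying this single fact is essentially the whole content of the argument; everything else is a direct invocation of Proposition~\ref{Countable_meets}. I would also note that this is genuinely a statement about sequences drawn from $\eM(L)$: for an arbitrary sequence in $\eF(L)$ the values $f_n(\inff,q)$ are general elements of $\Cong(L)$, whose countable joins need not be complemented, so the same route would not go through.
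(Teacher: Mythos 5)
Your proof is correct and follows exactly the route the paper intends: the corollary is stated as an immediate consequence of Proposition~\ref{Countable_meets} applied to the $\sigma$-frame $\Cong(L)$, with the complementation hypothesis supplied by the facts that $\nabla$ preserves countable joins and that each closed congruence $\nabla_a$ is complemented by the open congruence $\Delta_a$ in $\Cong(L)$. The paper leaves these verifications implicit, and your write-up supplies precisely those details, including the correct observation that the argument applies to sequences drawn from $\eM(L)$ rather than arbitrary sequences in $\eF(L)$.
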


Dual results regarding the existence of countable joins of measurable functions follow similarly:

\begin{Proposition}\label{Countable_joins}
If $\tbigvee_{n\in\N}f_n(p,\inff)$ is complemented in $L$ for every $p\in\Q$, then $\sup_{n\in\N}f_n$ exists in $\eM(L)$ and
\begin{align*}
(\sup_{n\in\N}f_n)(p,\inff)&=\tbigvee_{n\in\N}f_n(p,\inff),\\
(\sup_{n\in\N}f_n)(\inff,q)&=\tbigvee_{r<q}\big(\tbigvee_{n\in\N}f_n(r,\inff)\big)^{c}
\end{align*}
for all $p,q\in\Q$. Additionally, if each $f_n\in\M(L)$ and
\[\tbigvee_{p\in\Q}\big(\tbigvee_{n\in\N}f_n(p,\inff)\big)^{c}=1,\]
then $\sup_{n\in\N}f_n\in\M(L)$.
\end{Proposition}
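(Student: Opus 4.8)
The plan is to dualise the proof of Proposition~\ref{Countable_meets} by exhibiting a $\sigma$-scale whose associated measurable extended real function is the join of the $f_n$. Where the meet argument built its scale from the ``lower'' family $r\mapsto\tbigvee_{n\in\N}f_n(\inff,r)$, here I would start from the ``upper'' family $r\mapsto\tbigvee_{n\in\N}f_n(r,\inff)$, which is exactly the one the hypothesis assumes to be complemented.

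Concretely, first I would set
\[
\psi(r)\coloneqq\tbigvee_{n\in\N}f_n(r,\inff),\qquad \varphi(r)\coloneqq\psi(r)^{c},\qquad c_r\coloneqq\psi(r)
\]
for each $r\in\Q$, noting that $\varphi(r)$ is well defined precisely because of the complementation hypothesis. Since $(r,\inff)\ge(r',\inff)$ in $\ereals$ whenever $r<r'$, each $f_n(r,\inff)$ decreases as $r$ grows, so $\psi$ is decreasing and $\varphi$ is increasing. I would then verify the two $\sigma$-scale conditions from Section~\ref{sigma_scales}: for $s\le r$ one has $\psi(s)\ge\psi(r)$, hence $\varphi(s)\wedge c_r=\psi(s)^{c}\wedge\psi(r)\le\psi(r)^{c}\wedge\psi(r)=0$; and for $r<s$ one has $\psi(r)\ge\psi(s)$, hence $c_r\vee\varphi(s)=\psi(r)\vee\psi(s)^{c}\ge\psi(s)\vee\psi(s)^{c}=1$. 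Thus $(\varphi,(c_r)_{r\in\Q})$ is a genuine $\sigma$-scale.

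The associated function $f$ from Section~\ref{sigma_scales} then satisfies
\[
f(p,\inff)=\tbigvee_{r>p}c_r=\tbigvee_{r>p}\tbigvee_{n\in\N}f_n(r,\inff)=\tbigvee_{n\in\N}\tbigvee_{r>p}f_n(r,\inff)=\tbigvee_{n\in\N}f_n(p,\inff),
\]
where the last equality is the image under each $f_n$ of the defining relation $(p,\inff)=\tbigvee\{(r,\inff)\mid p<r\}$, and $f(\inff,q)=\tbigvee_{r<q}\varphi(r)=\tbigvee_{r<q}\big(\tbigvee_{n\in\N}f_n(r,\inff)\big)^{c}$, matching the claimed formulas. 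That $f$ is the join then follows at once from the order on $\eM(L)$: the identity $f(p,\inff)=\tbigvee_n f_n(p,\inff)$ shows $f\ge f_n$ for every $n$, while any upper bound $g$ satisfies $g(p,\inff)\ge f_n(p,\inff)$ for all $n$, whence $g(p,\inff)\ge\tbigvee_n f_n(p,\inff)=f(p,\inff)$, i.e.\ $g\ge f$. For the finiteness claim, assuming each $f_n\in\M(L)$, I would compute $\tbigvee_p f(p,\inff)=\tbigvee_n\tbigvee_p f_n(p,\inff)=1$ from finiteness of the $f_n$, and $\tbigvee_q f(\inff,q)=\tbigvee_r\big(\tbigvee_n f_n(r,\inff)\big)^{c}=1$ directly from the extra hypothesis.

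Since the construction is a straightforward mirror image of Proposition~\ref{Countable_meets}, I expect no serious obstacle; the one point deserving care is recognising that the complementation hypothesis is precisely what makes the dual scale well defined (so that $\varphi(r)=\psi(r)^{c}$ exists and the two scale inequalities hold), together with the routine interchange of countable joins, which is legitimate in a $\sigma$-frame.
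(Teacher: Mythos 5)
Your proposal is correct and matches the paper's intended argument: the paper proves Proposition~\ref{Countable_meets} via a $\sigma$-scale and merely remarks that the join version ``follows similarly,'' and your construction $\varphi(r)=\big(\tbigvee_{n\in\N}f_n(r,\inff)\big)^{c}$, $c_r=\tbigvee_{n\in\N}f_n(r,\inff)$ is exactly that dual $\sigma$-scale, with the scale axioms, the formulas, the least-upper-bound verification, and the finiteness claim all checked correctly.
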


\begin{Corollary}
If $(f_n)_{n\in\N}$ is a sequence in $\M(L)$ such that $\tbigvee_{n\in\N}f_n(p,\inff)$ is complemented for every $p\in\Q$ and there is a $g\in\M(L)$ such that $f_n\leq g$ for each $n\in\N$, then $\sup_{n\in\N}f_n$ exists in $\M(L)$.
\end{Corollary}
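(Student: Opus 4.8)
The plan is to reduce the statement to Proposition \ref{Countable_joins} and then mirror, in its order-dual form, the argument already given for the preceding infimum corollary. Since $\tbigvee_{n\in\N}f_n(p,\inff)$ is assumed complemented for every $p\in\Q$, Proposition \ref{Countable_joins} immediately guarantees that $\sup_{n\in\N}f_n$ exists in $\eM(L)$. The only remaining task is to upgrade membership from $\eM(L)$ to $\M(L)$, which by that same proposition amounts to verifying the finiteness condition
\[
\tbigvee_{p\in\Q}\big(\tbigvee_{n\in\N}f_n(p,\inff)\big)^{c}=1.
\]

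To establish this condition I would exploit the domination $f_n\le g$. Read on the upper generators, $f_n\le g$ means $f_n(p,\inff)\le g(p,\inff)$ for every $p\in\Q$ and every $n\in\N$. Using the relation $(p,\inff)\wedge(\inff,p)=0$ holding in $\reals$ together with the fact that $g$ is a $\sigma$-frame homomorphism, I obtain $g(\inff,p)\wedge g(p,\inff)=0$, whence $g(\inff,p)\wedge f_n(p,\inff)=0$ for each $n$. Distributing over the countable join then gives
\[
g(\inff,p)\wedge\tbigvee_{n\in\N}f_n(p,\inff)=\tbigvee_{n\in\N}\big(g(\inff,p)\wedge f_n(p,\inff)\big)=0.
\]

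Because $\tbigvee_{n\in\N}f_n(p,\inff)$ is complemented, the vanishing of this meet forces $g(\inff,p)\le\big(\tbigvee_{n\in\N}f_n(p,\inff)\big)^{c}$. Joining over all $p\in\Q$ and invoking the finiteness of $g$ (so that $\tbigvee_{p\in\Q}g(\inff,p)=1$) yields
\[
1=\tbigvee_{p\in\Q}g(\inff,p)\le\tbigvee_{p\in\Q}\big(\tbigvee_{n\in\N}f_n(p,\inff)\big)^{c},
\]
which is exactly the required condition, so Proposition \ref{Countable_joins} delivers $\sup_{n\in\N}f_n\in\M(L)$.

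Since the whole argument is the order-dual of the infimum corollary, I expect no genuinely new difficulty to arise. The only point that demands care is keeping the lower and upper parts, and the resulting inequality directions, consistent under the order reversal: one must remember that $f_n\le g$ translates as $f_n(p,\inff)\le g(p,\inff)$ on the $(p,\inff)$ generators (and, dually, $g(\inff,q)\le f_n(\inff,q)$ on the $(\inff,q)$ generators), so that it is $g(\inff,p)$ — not $g(p,\inff)$ — that annihilates $\tbigvee_{n\in\N}f_n(p,\inff)$ and hence sits below its complement.
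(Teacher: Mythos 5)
Your proof is correct and is essentially the paper's own argument: the paper states that the supremum results "follow similarly" by duality from the infimum case, and your verification of the finiteness condition $\tbigvee_{p\in\Q}\big(\tbigvee_{n\in\N}f_n(p,\inff)\big)^{c}=1$ — annihilating $\tbigvee_{n\in\N}f_n(p,\inff)$ with $g(\inff,p)$, passing to the complement, and using finiteness of $g$ — is precisely the order-dual of the proof given for the infimum corollary. You also correctly handled the one point where care is needed, namely that it is $g(\inff,p)$, not $g(p,\inff)$, that meets $\tbigvee_{n\in\N}f_n(p,\inff)$ to $0$.
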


\begin{Corollary}\label{Countable_joins_cor2}
$\eF(L)$ is closed under countable joins  in $\eM(L)$.
\end{Corollary}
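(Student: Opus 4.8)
The plan is to obtain this as the join-analogue of Corollary~\ref{Countable_meets_cor2}, by applying the dual Proposition~\ref{Countable_joins} to the $\sigma$-frame $\Cong(L)$ itself. Concretely, given a sequence $(f_n)_{n\in\N}$ in $\eF(L)=\eM(\Cong(L))$, I would verify the single hypothesis of Proposition~\ref{Countable_joins}, namely that $\tbigvee_{n\in\N}f_n(p,\inff)$ is complemented in $\Cong(L)$ for every $p\in\Q$. Granting this, Proposition~\ref{Countable_joins} produces $\sup_{n\in\N}f_n$ as a measurable extended real function on $\Cong(L)$, that is, as an element of $\eF(L)$, given by the two displayed formulas; this is exactly the assertion that $\eF(L)$ absorbs the countable join.

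The main obstacle is therefore the complementedness claim, and it is here that the special nature of the \emph{congruence} frame must be used: unlike an arbitrary $\sigma$-frame, where the hypothesis of Proposition~\ref{Countable_joins} has to be imposed by hand, $\Cong(L)$ is generated by the complemented congruences $\nabla_a$ and $\Delta_a$ (with $\nabla_a^{c}=\Delta_a$), and the step I expect to be delicate is leveraging this to force the particular joins $\tbigvee_{n\in\N}f_n(p,\inff)$ coming from $\sigma$-frame homomorphisms out of $\ereals$ to be complemented. This is precisely the order-dual of the fact underlying Corollary~\ref{Countable_meets_cor2}, where the corresponding joins $\tbigvee_{n\in\N}f_n(\inff,q)$ are complemented; so I would transcribe that verification, interchanging the roles of the generators $(p,\inff)$ and $(\inff,q)$.

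Rather than repeat that computation, however, the cleanest route is pure duality from the already-established Corollary~\ref{Countable_meets_cor2}. The negation $f\mapsto -f$ recalled in Section~\ref{background_MeasFunctions}, being precomposition with the negation automorphism of $\ereals$, is an order-reversing involution of $\eM(M)$ for every $M$, and in particular of $\eF(L)=\eM(\Cong(L))$. An order-reversing involution interchanges countable meets and joins, so for $(f_n)_{n\in\N}$ in $\eF(L)$ the functions $-f_n$ again lie in $\eF(L)$, their infimum $\inf_{n\in\N}(-f_n)$ lies in $\eF(L)$ by Corollary~\ref{Countable_meets_cor2}, and hence $\sup_{n\in\N}f_n=-\inf_{n\in\N}(-f_n)\in\eF(L)$. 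I expect the genuine content, the complementedness in $\Cong(L)$, to have been discharged once and for all in the proof of Corollary~\ref{Countable_meets_cor2}, leaving the present statement as a one-line consequence.
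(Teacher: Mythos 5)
Your closing argument---the negation involution---is correct and complete, but it is a genuinely different route from the paper's. The paper obtains this corollary from Proposition~\ref{Countable_joins} (itself established by dualizing the $\sigma$-scale construction of Proposition~\ref{Countable_meets}), applied to the $\sigma$-frame $\Cong(L)$: for a sequence $(f_n)_{n\in\N}$ lying in $\eM(L)$---note the qualifier ``in $\eM(L)$'' in the statement---each value $f_n(p,\inff)$ is a closed congruence $\nabla_{a_n}$, and since $\nabla$ preserves countable joins, $\tbigvee_{n\in\N}f_n(p,\inff)=\nabla_{\tbigvee_{n}a_n}$ is complemented (with complement $\Delta_{\tbigvee_{n}a_n}$); so the hypothesis of Proposition~\ref{Countable_joins} holds automatically. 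You instead bypass Proposition~\ref{Countable_joins} entirely: $f\mapsto -f$ is precomposition with an order-reversing automorphism of $\ereals$, hence an order-reversing involution of $\eF(L)$ that moreover preserves the subset $\eM(L)$ (since $-(\nabla\circ f)=\nabla\circ(-f)$), so $\sup_{n}f_n=-\inf_{n}(-f_n)$ exists in $\eF(L)$ by Corollary~\ref{Countable_meets_cor2}. This is shorter and makes the meet/join symmetry structural rather than recomputed; what it does not deliver is Proposition~\ref{Countable_joins} itself, which the paper needs later for sequences \emph{not} lying in $\eM(L)$ (in Section~\ref{DecompositionNonnegativeFunctions_Section} the approximating simple functions live only in $\SM(\Cong(L))$ and the complementedness hypothesis is verified by hand), nor the explicit formulas for $\sup_n f_n$---though these can be recovered by conjugating the formulas of Proposition~\ref{Countable_meets} with negation.

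One caution about your first two paragraphs, which you ultimately abandon: the complementedness of $\tbigvee_{n\in\N}f_n(p,\inff)$ cannot be ``forced'' for arbitrary sequences in $\eF(L)$, and zero-dimensionality of $\Cong(L)$ is not the relevant property. A countable join of complemented congruences need not be complemented: for open congruences, complementedness of $\tbigvee_{n}\Delta_{a_n}$ amounts to $\tbigvee_{n}\Delta_{a_n}\vee\bigwedge_{n}\nabla_{a_n}=1$, an instance of the coframe law that $\Cong(L)$ need not satisfy; thus already for $f_n=\rchi_{\Delta_{a_n}}\in\eF(L)$ the hypothesis of Proposition~\ref{Countable_joins} can fail. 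What rescues the corollary is precisely its restriction to sequences of functions measurable on $L$, whose generator values are closed congruences---the key fact your sketch never pins down. Your duality argument is immune to this, because it inherits exactly the scope of Corollary~\ref{Countable_meets_cor2}: that corollary concerns sequences in $\eM(L)$, and negation maps that class onto itself.
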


When $L$ is a frame, the countable meet given by Proposition \ref{Countable_meets} and the countable join given by Proposition \ref{Countable_joins} are defined by
\[(\inf_{n\in\N}f_n)(\inff,q)=\tbigvee_{n\in\N}f_n(\inff,q),\qquad(\inf_{n\in\N}f_n)(p,\inff)=\tbigvee_{r>p}\tbigwedge_{n\in\N}f_n(r,\inff),\]
and
\[(\sup_{n\in\N}f_n)(p,\inff)=\tbigvee_{n\in\N}f_n(p,\inff),\qquad(\sup_{n\in\N}f_n)(\inff,q)=\tbigvee_{r<q}\tbigwedge_{n\in\N}f_n(\inff,r),\]
coinciding with the formulas  in \cite{GarciaPicado2011}.

\section{Limit inferior and limit superior. Limits of functions}

We may now introduce the limit of a sequence of measurable functions.  Our motivation is the classic definition in \cite{EvansGariepy2015}.
\smallskip

Given a $\sigma$-frame $L$, let $$(f_k\colon \ereals\to L)_{k\in\N}$$ be a sequence in $\eM(L)$. The {\em limit inferior} and the {\em limit superior} of $(f_k)_{k\in\N}$ are defined as
\[
\begin{split}
\lim_{k\rightarrow+\infty}\inf f_k\coloneqq\sup_{n\geq 1}\inf_{k\geq n}f_k,\\[1mm]
\lim_{k\rightarrow+\infty}\sup f_k\coloneqq\inf_{n\geq 1}\sup_{k\geq n}f_k.
\end{split}
\]
Note that they may not exist. When they both exist and are equal, we say that the {\em limit} of $(f_k)_{k\in\N}$ exists and write
\[\lim_{k\rightarrow+\infty}f_k=\lim_{k\rightarrow+\infty}\inf f_k=\lim_{k\rightarrow+\infty}\sup f_k.\]
\smallskip

\begin{Proposition}
If the limit superior and the limit inferior of $(f_k)_{k\in\N}$ exist, then
\[
\lim_{k\rightarrow+\infty}\inf f_k\leq\lim_{k\rightarrow+\infty}\sup f_k.
\]
\end{Proposition}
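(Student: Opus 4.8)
The plan is to reduce the whole statement to the universal properties of $\sup$ and $\inf$ as, respectively, least upper bounds and greatest lower bounds in the poset $\eM(L)$, avoiding the explicit pointwise formulas altogether. Write $g_n\coloneqq\inf_{k\geq n}f_k$ and $h_n\coloneqq\sup_{k\geq n}f_k$. First I would record that the hypothesis is exactly what is needed to make these meaningful: since $\liminf_{k}f_k=\sup_{n\geq 1}g_n$ and $\limsup_{k}f_k=\inf_{n\geq 1}h_n$ are assumed to exist, each inner meet $g_n$ and each inner join $h_n$ must already exist in $\eM(L)$, and each is a genuine greatest lower bound, resp. least upper bound, of the corresponding family.

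The key step is the cross inequality $g_m\leq h_n$ for all $m,n\in\N$. To prove it, fix $m,n$ and put $N\coloneqq\max(m,n)$. Since $N\geq m$, the function $f_N$ lies in $\{f_k\mid k\geq m\}$, and as $g_m$ is a lower bound of that family we get $g_m\leq f_N$; dually, $N\geq n$ puts $f_N$ in $\{f_k\mid k\geq n\}$, and as $h_n$ is an upper bound of that family we get $f_N\leq h_n$. Chaining the two yields $g_m\leq h_n$. The point worth stressing is that this argument invokes nothing beyond the defining bound properties of $\inf$ and $\sup$, so it is completely insensitive to the fact that these operations are only partially defined in $\eM(L)$; in particular I need neither monotonicity of $(g_n)$ and $(h_n)$ nor the complementation hypotheses of Propositions \ref{Countable_meets} and \ref{Countable_joins}.

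With $g_m\leq h_n$ in hand for all $m,n$, the conclusion follows in two moves. Fixing $n$ and letting $m$ vary, $h_n$ is an upper bound of $\{g_m\mid m\geq 1\}$; since $\sup_{m}g_m=\liminf_{k}f_k$ is the \emph{least} such bound, $\liminf_{k}f_k\leq h_n$. As this holds for every $n$, the element $\liminf_{k}f_k$ is a lower bound of $\{h_n\mid n\geq 1\}$, and since $\inf_{n}h_n=\limsup_{k}f_k$ is the \emph{greatest} such bound, we obtain $\liminf_{k}f_k\leq\limsup_{k}f_k$, as required.

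I do not anticipate a real obstacle: the only care required is to argue entirely at the level of the order relation rather than through the $(p,\inff)$ and $(\inff,q)$ presentations, since the latter route would needlessly force one to track the complementation side-conditions under which the joins and meets were constructed. Should a formula-level verification ever be wanted, one could alternatively confirm $g_m(\inff,q)\leq h_n(\inff,q)$ for all $q\in\Q$ via those propositions, but the bound-based argument above is both shorter and more transparent.
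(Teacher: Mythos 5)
Your proposal is correct and follows essentially the same route as the paper: both proofs reduce the statement to the cross inequality $\inf_{k\geq m}f_k\leq\sup_{k\geq n}f_k$ for all $m,n$, and then conclude by applying the universal properties of least upper bounds and greatest lower bounds twice. The only difference is in how that intermediate inequality is obtained --- the paper derives it from the monotonicity of the sequences $(g_m)$ and $(h_n)$ together with the diagonal inequality $g_k\leq h_k$ via a case split on $k\leq n$ versus $k>n$, whereas you get it directly by inserting $f_{N}$ with $N=\max(m,n)$ between $g_m$ and $h_n$ --- a slightly more economical step, but the same argument in substance.
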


\begin{proof}
Set $h_n\coloneqq\sup_{k\geq n}f_k$ and $g_m\coloneqq\inf_{k\geq m}f_k$. Then $(h_n)_{n\in\N}$ is a decreasing sequence while $(g_m)_{m\in\N}$ is increasing. Moreover, $g_k\leq h_k$ for every $k\in\N$.
Fix $n\in\N$. If $k\leq n$, then $g_k\leq g_n\leq h_n$. If $k>n$, then $g_k\leq h_k\leq h_n$. Hence
\[\sup_{k\in\N}g_k\leq h_n\]
for every $n\in\N$, which means that $\sup_{k\in\N}g_k\leq\inf_{n\in\N}h_n$, as claimed.
\end{proof}

\begin{Proposition}
Let $(f_k)_{k\in\N}$ and $(g_k)_{k\in\N}$ be sequences in $\eM(L)$.
\begin{enumerate}
\item[\em (1)] If $\lim\sup f_k$, $\lim\sup g_k$ and $\lim\sup (f_k+g_k)$ exist, then
\[
\lim_{k\rightarrow+\infty}\sup (f_k+g_k)\leq\lim_{k\rightarrow+\infty}\sup f_k+\lim_{k\rightarrow+\infty}\sup g_k.
\]
\item[\em(2)] If $\lim\inf f_k$, $\lim\inf g_k$ and $\lim\inf(f_k+g_k)$ exist, then
\[
\lim_{k\rightarrow+\infty}\inf f_k+\lim_{k\rightarrow+\infty}\inf g_k\leq\lim_{k\rightarrow+\infty}\inf(f_k+g_k).
\]
\end{enumerate}
\end{Proposition}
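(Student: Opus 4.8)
The plan is to prove both inequalities by the same two-step scheme, the two parts being order-dual: first a monotonicity estimate on the tails, and then a ``continuity of addition'' computation carried out on the generator values. For (1) the relevant tails decrease and one works with the $(\inff,q)$-components; for (2) they increase and one works with the $(p,\inff)$-components. Throughout, the existence of the limits in question is what lets Propositions \ref{Countable_meets} and \ref{Countable_joins} supply the generator formulas for the meets and joins involved, so I would record at the outset that the relevant joins are complemented.

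For (1), set $h_m\coloneqq\sup_{k\geq m}f_k$ and $h'_m\coloneqq\sup_{k\geq m}g_k$, so that $(h_m)$ and $(h'_m)$ are decreasing with $\lim\sup f_k=\inf_m h_m\eqqcolon h$ and $\lim\sup g_k=\inf_m h'_m\eqqcolon h'$. The first step is purely order-theoretic: for every $k\geq m$ we have $f_k\leq h_m$ and $g_k\leq h'_m$, whence $f_k+g_k\leq h_m+h'_m$ by monotonicity of addition; taking the supremum over $k\geq m$ gives $\sup_{k\geq m}(f_k+g_k)\leq h_m+h'_m$, so that, writing $S\coloneqq\lim\sup(f_k+g_k)$, we obtain $S\leq h_m+h'_m$ for every $m$. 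Rewriting this through the order on $\eM(L)$ yields $\tbigvee_m(h_m+h'_m)(\inff,q)\leq S(\inff,q)$ for all $q$.

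The main step is to prove $(h+h')(\inff,q)\leq\tbigvee_m(h_m+h'_m)(\inff,q)$, since together with the previous inequality this gives $(h+h')(\inff,q)\leq S(\inff,q)$, i.e. $S\leq h+h'$, which is the assertion. Here I would expand both sides using the formulas of Section \ref{background_MeasFunctions} and Proposition \ref{Countable_meets}: the addition formula gives $(h+h')(\inff,q)=\tbigvee_{t\in\Q}\big(h(\inff,t)\wedge h'(\inff,q-t)\big)$, while $h(\inff,t)=\tbigvee_n h_n(\inff,t)$ and $h'(\inff,q-t)=\tbigvee_n h'_n(\inff,q-t)$. Countable frame distributivity then rewrites $(h+h')(\inff,q)$ as $\tbigvee_{t}\tbigvee_{n,m}\big(h_n(\inff,t)\wedge h'_m(\inff,q-t)\big)$. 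The decisive observation is that, since $(h_n)$ and $(h'_n)$ decrease, the maps $n\mapsto h_n(\inff,t)$ and $m\mapsto h'_m(\inff,s)$ are increasing; hence for $N\coloneqq\max(n,m)$ each summand is dominated by $h_N(\inff,t)\wedge h'_N(\inff,q-t)\leq(h_N+h'_N)(\inff,q)$, so the doubly-indexed join collapses onto the diagonal and is bounded above by $\tbigvee_N(h_N+h'_N)(\inff,q)$, exactly as needed.

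Part (2) is the order-dual. Setting $a_m\coloneqq\inf_{k\geq m}f_k$ and $b_m\coloneqq\inf_{k\geq m}g_k$, now increasing with suprema $a=\lim\inf f_k$ and $b=\lim\inf g_k$, the tail estimate reads $a_m+b_m\leq\inf_{k\geq m}(f_k+g_k)\leq\lim\inf(f_k+g_k)\eqqcolon C$; running the same computation on the $(p,\inff)$-components, via Proposition \ref{Countable_joins} and the formula $(a+b)(p,\inff)=\tbigvee_t\big(a(t,\inff)\wedge b(p-t,\inff)\big)$, reduces the claim to $(a+b)(p,\inff)\leq\tbigvee_N(a_N+b_N)(p,\inff)$, which follows by the same $\max$-diagonal argument. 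The only point that really needs care, and which I expect to be the sole obstacle, is this interchange of addition with the sequential meet (resp. join): it fails for arbitrary sequences, and what makes it work is precisely the monotonicity of the tail sequences, which forces the two independent indices back onto the diagonal. Everything else is a routine application of the formulas already recorded.
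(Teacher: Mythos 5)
Your first step (the tail estimate $\sup_{j\geq m}(f_j+g_j)\leq h_m+h'_m$, and dually for infima) is exactly the paper's first step, and it is fine. The gap is in your main step, and it is located precisely where you announce that ``the existence of the limits in question is what lets Propositions \ref{Countable_meets} and \ref{Countable_joins} supply the generator formulas ... so I would record at the outset that the relevant joins are complemented.'' This inverts the logic of those propositions. They assert that complementedness of the joins $\tbigvee_n h_n(\inff,t)$ is a \emph{sufficient} condition for the infimum to exist and to be given by the displayed formulas; they do not assert that an infimum which happens to exist in the poset $\eM(L)$ satisfies them. The proposition you are proving assumes only that the limits superior and inferior \emph{exist} as greatest lower / least upper bounds in $\eM(L)$. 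From $h\leq h_n$ for all $n$ one only gets $\tbigvee_n h_n(\inff,t)\leq h(\inff,t)$; the reverse inequality $h(\inff,t)\leq\tbigvee_n h_n(\inff,t)$, which is the identity your whole computation starts from, is not available, and nothing in the hypotheses makes the join $\tbigvee_n h_n(\inff,t)$ complemented. So your argument proves the inequality only in the special case where the tail infima and suprema are the ones produced by the canonical $\sigma$-scale constructions of Section 3, not under the stated hypotheses.

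The paper's proof never descends to generators at all: after the tail estimate it uses the purely order-algebraic fact that addition commutes with suprema of increasing sequences and with infima of decreasing sequences. That fact (which the paper asserts from monotonicity) can be proved without any generator formulas, using only that translation by a fixed element of $\M(L)$ is monotone with monotone inverse: if $c\geq a_n+b_n$ for all $n$, with $(a_n)$, $(b_n)$ increasing, $a=\sup_n a_n$ and $b=\sup_n b_n$, then for fixed $m$ and every $n$ one has $a_n+b_m\leq c$ (treat $n\geq m$ and $n<m$ separately), hence $a_n\leq c-b_m$ for all $n$, hence $a\leq c-b_m$, hence $b_m\leq c-a$ for all $m$, hence $b\leq c-a$, i.e.\ $a+b\leq c$. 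This shows $a+b$ is the least upper bound of $\{a_n+b_n\}$. Replacing your generator computation by this argument (and its order dual) repairs the proof; your diagnosis that the addition/limit interchange is the crux, and that tail monotonicity is what makes it work, was correct — it is only the tool you chose for it that is not licensed by the hypotheses.
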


\begin{proof}
Note that for any $j\geq n$,
\[
\inf_{k\geq n} f_k+\inf_{k\geq n} g_k\leq f_j+g_j\leq\sup_{k\geq n} f_k+\sup_{k\geq n} g_k,
\]
which implies that
\[
\inf_{k\geq n} f_k+\inf_{k\geq n} g_k\leq \inf_{j\geq n}(f_j+g_j)\,\,\textrm{ and }\,\,\sup_{j\geq n}(f_j+g_j)\leq\sup_{k\geq n} f_k+\sup_{k\geq n} g_k.
\]

As a result, since $(\inf_{k\geq n}f_k)_{n\in\N}$ and $(\inf_{k\geq n}g_k)_{n\in\N}$ are increasing sequences, we get
\[
\sup_{n\in\N}\inf_{k\geq n} f_k+\sup_{n\in\N}\inf_{k\geq n} g_k=\sup_{n\in\N}\big(\inf_{k\geq n} f_k+\inf_{k\geq n} g_k\big)\leq\sup_{n\in\N}\inf_{j\geq n}(f_j+g_j).
\]
Similarly, since $(\sup_{k\geq n}f_k)_{n\in\N}$ and $(\sup_{k\geq n}g_k)_{n\in\N}$ are decreasing, we have
\[
\inf_{n\in\N}\sup_{k\geq n} f_k+\inf_{n\in\N}\sup_{k\geq n}g _k=\inf_{n\in\N}\big(\sup_{k\geq n} f_k+\sup_{k\geq n} g_k\big)\geq\inf_{n\in\N}\sup_{j\geq n}(f_j+g_j).\qedhere
\]
\end{proof}

\begin{Remark} Although it can be hard to check whether the limit of a sequence in $\eM(L)$ does exist or not, the answer is straightforward whenever the sequence is monotone:

If $(f_k)_{k\in\N}$ is an increasing (resp. decreasing) sequence, its limit exists if and only if $\sup_{k\in\N}f_k$ (resp. $\inf_{k\in\N}f_k$) exists, and
\[\lim_{k\rightarrow+\infty}f_k=\sup_{k\in\N}f_k\qquad\textrm{(resp. }\lim_{k\rightarrow+\infty}f_k=\inf_{k\in\N}f_k).
\]
\end{Remark}

Combining the previous remark with Corollaries \ref{Countable_meets_cor2} and \ref{Countable_joins_cor2}, one ensures the existence of the limit of a monotone sequence of measurable functions in $\eF(L)$. All that is left to check is whether the limit is measurable or not.

\begin{Proposition}
Every monotone sequence in $\eM(L)$ has a limit in $\eF(L)$. Every monotone and bounded sequence in $\M(L)$ has a limit in $\F(L)$.
\end{Proposition}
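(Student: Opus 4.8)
The plan is to reduce both assertions to the existence and explicit form of countable joins and meets established in Section~3. For the opening statement, let $(f_k)_{k\in\N}$ be a monotone sequence in $\eM(L)$; through the embedding $\eM(L)\subseteq\eF(L)$ it is also a monotone sequence in $\eF(L)=\eM(\Cong(L))$. Applying the preceding Remark inside $\eF(L)$, its limit exists (and equals $\sup_{k\in\N}f_k$ in the increasing case, $\inf_{k\in\N}f_k$ in the decreasing case) precisely when the corresponding countable join or meet exists in $\eF(L)$. But that is exactly the content of Corollaries~\ref{Countable_joins_cor2} and~\ref{Countable_meets_cor2}, which assert that $\eF(L)$ is closed under countable joins and countable meets. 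Hence the limit exists in $\eF(L)$, settling the first claim.

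For the bounded case, let $(f_k)_{k\in\N}$ be monotone and bounded in $\M(L)$, say $g\le f_k\le h$ for all $k$ with $g,h\in\M(L)$. Since the embedding $\nabla\circ(-)$ preserves finiteness, $\M(L)\subseteq\F(L)\subseteq\eF(L)$, so the first part already gives a limit in $\eF(L)$; what remains is to show that this limit is \emph{finite}, i.e.\ that it lies in $\F(L)$. Viewing the $f_k$ as elements of $\F(L)=\M(\Cong(L))$, each value $f_k(p,\inff)$ now equals $\nabla_{f_k(p,\inff)}\in\nabla[L]$ and is therefore complemented in $\Cong(L)$; consequently Propositions~\ref{Countable_joins} and~\ref{Countable_meets} apply and supply the explicit formulas for the limit.

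I would then verify the two finiteness conditions. In the increasing case the limit is $\sup_{k\in\N}f_k$, and one condition is automatic from the finiteness of the individual $f_k$:
\[
\bigvee_{p\in\Q}(\sup_{k\in\N}f_k)(p,\inff)=\bigvee_{k\in\N}\Big(\bigvee_{p\in\Q}f_k(p,\inff)\Big)=\bigvee_{k\in\N}1=1 .
\]
The second condition, $\bigvee_{q\in\Q}(\sup_{k\in\N}f_k)(\inff,q)=1$, reduces through the formula of Proposition~\ref{Countable_joins} to proving $\bigvee_{r\in\Q}\big(\bigvee_{k\in\N}f_k(r,\inff)\big)^{c}=1$. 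This is where the upper bound $h$ is used: from $f_k\le h$ one gets $\bigvee_k f_k(r,\inff)\le h(r,\inff)$, hence $\big(\bigvee_k f_k(r,\inff)\big)^{c}\ge h(r,\inff)^{c}$; and the defining relation $(r,\inff)\wedge(\inff,r)=0$ of $\reals$ forces $h(\inff,r)\le h(r,\inff)^{c}$, so that finiteness of $h$ yields $\bigvee_{r}h(r,\inff)^{c}\ge\bigvee_{r}h(\inff,r)=1$. The decreasing case is symmetric, using the lower bound $g$, the meet formula of Proposition~\ref{Countable_meets}, and the same relation read in the other coordinate.

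The step I expect to be the main obstacle is this last finiteness verification: the first of the two conditions is free, but the second genuinely needs the bound, and the delicate point is transferring information from one coordinate of the bound to a statement about complements in the other coordinate. This hinges on both the complementedness of the values $\bigvee_k f_k(r,\inff)$ in $\Cong(L)$ (guaranteed because measurable functions take values in $\nabla[L]$) and the defining relations of $\reals$ linking $(r,\inff)$ and $(\inff,r)$.
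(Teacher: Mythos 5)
Your proposal is correct and takes essentially the same route as the paper, which states this proposition as an immediate consequence of the preceding Remark combined with Corollaries \ref{Countable_meets_cor2} and \ref{Countable_joins_cor2} — exactly your first paragraph. Your finiteness verification in the bounded case (transferring the bound from one coordinate to complements in the other via the relation $(r,\inff)\wedge(\inff,r)=0$) is precisely the argument the paper uses to prove the two unnumbered corollaries on bounded sequences following Propositions \ref{Countable_meets} and \ref{Countable_joins}, transplanted to $\Cong(L)$, so you have re-derived rather than cited that step.
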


In the following example, we describe a simplified way of determining the limit of a monotone sequence.

\begin{Example}
Let us compute the limit of the decreasing sequence $(f_n)_{n\in\N}$ given by $f_n=\pmb{\tfrac{1}{n}}$ (recall Example \ref{Constant_function}). Since it is a bounded and monotone sequence of finite measurable functions, from the previous proposition we know that its limit exists and
\[
\lim_n f_n=\inf_{n\in\N}f_n\in\F(L).
\]
Moreover, for each $q\in\Q$,
\[(\inf_{n\in\N}f_n)(\inff,q)=\tbigvee_{n\in\N}f_n(\inff,q)=
\begin{cases}
0 &\textrm{if }q\leq0\\
1 &\textrm{if }q>1.
\end{cases}
\]
Indeed, if $q\leq 0$, $f_n(\inff,q)=0$ for all $n\in\N$.
Otherwise, there is $n_q\in\N$ such that $0<\tfrac{1}{n_q}<q$ and
\[\tbigvee_{n\in\N}f_n(\inff,q)\geq f_{n_q}(\inff,q)=1.\]

Because $\varphi(r)\coloneqq(\inf_{n\in\N}f_n)(\inff,r)$ is a $\sigma$-scale in $\Cong(L)$ generating $\inf_{n\in\N}f_n$ (see \cite[Proposition 3.8]{Bernardes2023} for more details) and $\Cong(L)$ is a frame, for each $p\in\Q$,
\[(\inf_{n\in\N}f_n)(p,\inff)=\tbigvee_{r>p}(\inf_{n\in\N}f_n)(\inff,r)^{c}=
\begin{cases}
1 &\textrm{if }p<0\\
0 &\textrm{if }p\geq1.
\end{cases}
\]
Hence, both $(\inf_{n\in\N}f_n)(p,\inff)$ and $(\inf_{n\in\N}f_n)(\inff,q)$ are in $\nabla[L]$ for all $p,q\in\Q$, which means that  $\inf_{n\in\N}f_n$ is measurable on $L$. More precisely, we have
\[\lim_n\pmb{\tfrac{1}{n}}=\pmb{0}.\]
\end{Example}

\section{Simple functions}

In this section, we study a special type of $\sigma$-localic functions, the measurable simple functions, which are the point-free counterparts of finite linear combinations of characteristic functions associated with measurable sets. This class of functions forms a subring of $\M(L)$ and will be crucial to introduce integration in $\sigma$-frames.
\smallskip

Let $L$ be a $\sigma$-frame. We will denote by $\Cd L\coloneqq\{a\in L\mid a\textrm{ is complemented}\}$ the sublattice of complemented elements in $L$.

\begin{Definition}\label{Def_Simple_function}
We say that a measurable function $f\colon\reals\to L$ is {\em simple} when it is a finite linear combination of characteristic functions with rational scalars, that is, if there exist $n\in\N$, $r_1,\ldots ,r_n\in\Q$ and $a_1,\ldots ,a_n\in\Cd L$ such that
\[f=\tsum_{i=1}^{n}r_i\cdot\rchi_{a_i}.\]
Whenever $r_1<r_2<\cdots<r_n$ and $a_1,\ldots ,a_n\in\Cd L\smallsetminus\{0\}$ are pairwise disjoint with $\tbigvee_{i=1}^{n}a_i=1$, we say that $\tsum_{i=1}^{n}r_i\cdot\rchi_{a_i}$ is the {\em canonical form} of $f$.
\end{Definition}

In particular, characteristic functions are simple. Recalling Example \ref{Characteristic_function} and the algebraic operations in $\M(L)$ presented in Section \ref{background_MeasFunctions}, it is a straightforward exercise to prove the next lemma. It summarises some basic properties that will be helpful throughout this section.

\begin{Lemma}\label{Characteristic_prop}
The following statements hold for a $\sigma$-frame $L$:
\begin{enumerate}[label={\em(\arabic*)}]
\item $\rchi_{0_L}=\pmb{0}$. Moreover, for any $a,b\in\Cd L$, $a\leq b$ if and only if $\rchi_a\leq\rchi_b$.
\item For any $a,b\in \Cd L$, $\rchi_a\cdot\rchi_b=\rchi_{a\wedge b}$.
\item If $a_1,\ldots ,a_k\in \Cd L$ are pairwise disjoint, then $\rchi_{a_1}+\cdots+\rchi_{a_k}=\rchi_{\vee_{i=1}^{k}a_i}$.
\end{enumerate}
\end{Lemma}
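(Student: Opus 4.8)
The plan is to prove each of the three statements directly from the explicit formulas for $\rchi_a$ given in Example \ref{Characteristic_function}, together with the algebraic operations (multiplication and addition of nonnegative functions) recorded in Section \ref{background_MeasFunctions}. Since a measurable function $f\in\M(L)$ is fully determined by its values $f(p,\inff)$ and $f(\inff,q)$ for all $p,q\in\Q$, each claim reduces to verifying an equality of these generator-values for every rational argument, split into the relevant cases ($p<0$, $0\le p<1$, $p\ge 1$, and dually for $q$).

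For statement (1), I would first observe that $\rchi_{0_L}(p,\inff)$ equals $1$ for $p<0$ and $0$ for $p\ge 0$ (since $0_L^c=1_L$ forces the middle case $0\le p<1$ to take value $0_L$), which is exactly $\pmb{0}(p,\inff)$; the dual check on $(\inff,q)$ is identical, giving $\rchi_{0_L}=\pmb 0$. For the order equivalence, recall that $f\le g$ means $f(p,\inff)\le g(p,\inff)$ for all $p$. Comparing the formulas, $\rchi_a\le\rchi_b$ holds iff $a\le b$ in the single nontrivial range $0\le p<1$ (the cases $p<0$ and $p\ge 1$ give $1\le 1$ and $0\le 0$ automatically), and the dual condition on $(\inff,q)$ gives $a^c\le b^c$, which is equivalent to $a\le b$. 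Thus $a\le b \Leftrightarrow \rchi_a\le\rchi_b$.

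For statement (2), I would apply the product formula for nonnegative functions from item (3) of Section \ref{background_MeasFunctions}, noting first that $\pmb 0\le\rchi_a\wedge\rchi_b$ since characteristic functions are nonnegative. For $p\ge 0$ one computes $(\rchi_a\cdot\rchi_b)(p,\inff)=\tbigvee_{s>0}\rchi_a(s,\inff)\wedge\rchi_b(\tfrac{p}{s},\inff)$; since each factor is complemented and takes only values in $\{0,a,1\}$ or $\{0,b,1\}$, evaluating the join over the ranges of $s$ yields $a\wedge b$ when $0\le p<1$ and $0$ when $p\ge1$, matching $\rchi_{a\wedge b}$. The dual computation on $(\inff,q)$ produces $(a\wedge b)^c=a^c\vee b^c$ in the appropriate range, again agreeing with $\rchi_{a\wedge b}$.

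For statement (3), I would proceed by induction on $k$, the base case $k=1$ being trivial, so it suffices to treat $k=2$: if $a,b\in\Cd L$ are disjoint, show $\rchi_a+\rchi_b=\rchi_{a\vee b}$ (noting $a\vee b$ is complemented with $(a\vee b)^c=a^c\wedge b^c$, and the inductive step reuses disjointness of $\bigvee_{i<k}a_i$ with $a_k$). Here I would apply the addition formula $(\rchi_a+\rchi_b)(\inff,q)=\tbigvee_{t\in\Q}(\rchi_a(\inff,t)\wedge\rchi_b(\inff,q-t))$ and the dual for $(p,\inff)$. The main obstacle, and the step deserving the most care, is this last computation: the disjointness hypothesis $a\wedge b=0$ is exactly what is needed to rule out the ``overlap'' contributions in the convolution-style join, so that the value jumps cleanly at the thresholds $q=0$ and $q=1$ rather than accumulating an extra step near $q=2$. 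Concretely, one must check that for $0<q\le 1$ the join evaluates to $a^c\wedge b^c=(a\vee b)^c$ (using $a\wedge b=0$), and that no contribution survives to make the function nonzero for $q>1$; the distributivity of $L$ together with $a\wedge b=0$ handles these case splits, and the $(p,\inff)$ side follows by the same reasoning.
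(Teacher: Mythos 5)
Your proposal is correct and is essentially the paper's own route: the paper gives no proof at all, declaring the lemma ``a straightforward exercise'' from the formulas of Example \ref{Characteristic_function} and the operations of Section \ref{background_MeasFunctions}, and your generator-by-generator case analysis is exactly that exercise carried out. Two small slips to fix in the write-up: in (1) the dual condition coming from $\rchi_a\leq\rchi_b$ is $\rchi_b(\inff,q)\leq\rchi_a(\inff,q)$, i.e.\ $b^{c}\leq a^{c}$ (not $a^{c}\leq b^{c}$), though checking the $(p,\inff)$ side alone already suffices by the definition of the order; and in (3) disjointness is not needed in the range $0<q\leq 1$ (there the convolution join is automatically $a^{c}\wedge b^{c}=(a\vee b)^{c}$), but precisely in the range $1<q\leq 2$, where it yields $a^{c}\vee b^{c}=(a\wedge b)^{c}=1$ so that $(\rchi_a+\rchi_b)(\inff,q)=1$ for all $q>1$, and dually in the range $1\leq p<2$, where it kills the overlap term $a\wedge b$ so that $(\rchi_a+\rchi_b)(p,\inff)=0$ for all $p\geq 1$.
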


From any representation of a simple function, we can always obtain its canonical form.

\begin{Proposition}\label{FSimple_equiv}
The following statements are equivalent for a $\sigma$-frame $L$ and an $f\in\M(L)$:
\begin{enumerate}[label={\em(\arabic*)}]
\item There are $r_1,\ldots,r_n\in\Q$ and $a_1,\ldots,a_n\in\Cd L$ such that $f=\tsum_{i=1}^{n}r_i\cdot\rchi_{a_i}$.
\item There are $r_1,\ldots,r_n\in\Q$ and pairwise disjoint $a_1,\ldots,a_n\in\Cd L\smallsetminus\{0\}$  such that $f=\tsum_{i=1}^{n}r_i\cdot\rchi_{a_i}$.
\item There are $r_1,\ldots,r_n\in\Q$ and pairwise disjoint $a_1,\ldots,a_n\in\Cd L\smallsetminus\{0\}$  with $\tbigvee_{i=1}^{n}a_i=1$ such that $f=\tsum_{i=1}^{n}r_i\cdot\rchi_{a_i}$.
\item There are $r_1<\cdots<r_n\in\Q$ and pairwise disjoint $a_1,\ldots,a_n\in\Cd L\smallsetminus\{0\}$  with $\tbigvee_{i=1}^{n}a_i=1$ such that $f=\tsum_{i=1}^{n}r_i\cdot\rchi_{a_i}$.
\end{enumerate}
\end{Proposition}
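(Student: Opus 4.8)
The plan is to establish all four equivalences by proving the forward chain $(1)\Rightarrow(2)\Rightarrow(3)\Rightarrow(4)$, since the reverse implications $(4)\Rightarrow(3)\Rightarrow(2)\Rightarrow(1)$ are immediate: each statement is obtained from the previous one by imposing additional constraints, so any representation witnessing a stronger statement automatically witnesses the weaker ones. Throughout I would exploit that $\Cd L$ is a Boolean algebra (a bounded distributive lattice in which every element is complemented), together with the additivity of characteristic functions on disjoint elements recorded in Lemma \ref{Characteristic_prop}(3).

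For $(1)\Rightarrow(2)$, the idea is to pass to the atoms of the finite Boolean subalgebra generated by $a_1,\ldots,a_n$. For each $S\subseteq\{1,\ldots,n\}$ set $b_S\coloneqq\big(\tbigwedge_{i\in S}a_i\big)\wedge\big(\tbigwedge_{j\notin S}a_j^{c}\big)\in\Cd L$. These elements are pairwise disjoint (if $S\ne T$ then some index lies in exactly one of them, forcing $b_S\wedge b_T\le a_i\wedge a_i^{c}=0$), and standard Boolean manipulation gives $a_i=\tbigvee_{S\ni i}b_S$ for each $i$. By Lemma \ref{Characteristic_prop}(3) it then follows that $\rchi_{a_i}=\tsum_{S\ni i}\rchi_{b_S}$, and substituting into $f=\tsum_{i=1}^{n}r_i\cdot\rchi_{a_i}$ and exchanging the order of summation yields $f=\tsum_{S}\big(\tsum_{i\in S}r_i\big)\cdot\rchi_{b_S}$. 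Discarding the finitely many indices $S$ with $b_S=0$ leaves a representation of $f$ over pairwise disjoint nonzero elements of $\Cd L$, which is exactly $(2)$.

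The remaining two steps are short. For $(2)\Rightarrow(3)$, given pairwise disjoint nonzero $a_1,\ldots,a_n$, put $a_0\coloneqq\big(\tbigvee_{i=1}^{n}a_i\big)^{c}$. If $a_0=0$ the join is already $1$; otherwise adjoining the vacuous term $0\cdot\rchi_{a_0}=\pmb{0}$ produces pairwise disjoint nonzero elements $a_0,\ldots,a_n$ with $\tbigvee_{i=0}^{n}a_i=1$, giving $(3)$. For $(3)\Rightarrow(4)$, I would merge the elements carrying equal coefficients: for each value $v$ occurring among $r_1,\ldots,r_n$ set $b_v\coloneqq\tbigvee_{\,r_i=v}a_i$, a nonzero complemented element, and observe that $v\cdot\rchi_{b_v}=\tsum_{r_i=v}r_i\cdot\rchi_{a_i}$ again by Lemma \ref{Characteristic_prop}(3). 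The $b_v$ are pairwise disjoint with join $1$; listing the distinct values in increasing order $v_1<\cdots<v_m$ and relabelling delivers the canonical form $(4)$.

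The main obstacle is the bookkeeping in $(1)\Rightarrow(2)$: one must verify carefully that the atoms $b_S$ recover each $a_i$ as $\tbigvee_{S\ni i}b_S$ and that the resulting double sum rearranges correctly, since this is precisely where the disjointness hypothesis of Lemma \ref{Characteristic_prop}(3) is used. Everything else is routine once $\Cd L$ is handled as a Boolean algebra.
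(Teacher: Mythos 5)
Your proposal is correct, and its skeleton --- proving the chain $(1)\Rightarrow(2)\Rightarrow(3)\Rightarrow(4)$ and dismissing the reverse implications as trivial weakenings --- is exactly the paper's; moreover your steps $(2)\Rightarrow(3)$ and $(3)\Rightarrow(4)$ essentially coincide with the paper's (adjoin $(\tbigvee_{i=1}^{n}a_i)^{c}$ with coefficient $0$; merge equal coefficients via Lemma \ref{Characteristic_prop}(3), you doing it per value where the paper does it pair by pair). The genuine difference is in $(1)\Rightarrow(2)$. The paper refines one overlapping pair at a time, via $r_i\cdot\rchi_{a_i}+r_j\cdot\rchi_{a_j}=(r_i+r_j)\cdot\rchi_{a_i\wedge a_j}+r_i\cdot\rchi_{a_i\wedge a_j^{c}}+r_j\cdot\rchi_{a_j\wedge a_i^{c}}$; this is a local step that must be iterated to make all $n$ terms disjoint, and the required induction (including the check that later refinements do not spoil earlier disjointness) is left implicit. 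You instead atomize globally, passing to the $2^{n}$ atoms $b_S=(\tbigwedge_{i\in S}a_i)\wedge(\tbigwedge_{j\notin S}a_j^{c})$ of the finite Boolean subalgebra of $\Cd L$ generated by $a_1,\ldots,a_n$, recovering $a_i=\tbigvee_{S\ni i}b_S$, and regrouping the double sum. This one-shot decomposition eliminates the iteration and makes the bookkeeping fully explicit, at the harmless cost of exponentially many terms; the paper's pairwise refinement is more economical term-wise but, as written, is really only the inductive step of the complete argument. Both routes rest on the same two ingredients: $\Cd L$ is a Boolean sublattice of $L$, and characteristic functions are additive on pairwise disjoint complemented elements.
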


\begin{proof}
(1)$\Rightarrow$(2): Since $a_i=0_L$ implies that $\rchi_{a_i}=\pmb{0}$, we may assume without loss of generality that $a_1,\ldots ,a_n\in\Cd L\smallsetminus\{0\}$. Moreover, as $a_i,a_j\in\Cd L$, we have $a_i=(a_i\wedge a_j)\vee(a_i\wedge a_j^{c})$ and $a_j=(a_j\wedge a_i)\vee(a_j\wedge a_i^{c})$. Therefore,
\[
\begin{split}
r_i\cdot\rchi_{a_i}+r_j\cdot\rchi_{a_j}&=r_i\cdot\rchi_{a_i\wedge a_j}+r_i\cdot\rchi_{a_i\wedge a_j^{c}}+r_j\cdot\rchi_{a_j\wedge a_i}+r_j\cdot\rchi_{a_j\wedge a_i^{c}}\\
&=(r_i+r_j)\cdot\rchi_{a_i\wedge a_j}+r_i\cdot\rchi_{a_i\wedge a_j^{c}}+r_j\cdot\rchi_{a_j\wedge a_i^{c}},
\end{split}
\]
where $a_i\wedge a_j$, $a_i\wedge a_j^{c}$ and $a_j\wedge a_i^{c}$ are pairwise disjoint.

\smallskip
\noindent
(2)$\Rightarrow$(3): If $\tbigvee_{i=1}^{n}a_i\neq 1$, take $a\coloneqq(\tbigvee_{i=1}^{n}a_i)^{c}$. Then $a\neq 0$ and
\[f=\tsum_{i=1}^{n}r_i\cdot\rchi_{a_i}=\tsum_{i=1}^{n}r_i\cdot\rchi_{a_i}+0\cdot\rchi_{a},\]
where $a_1,\ldots ,a_n,a\in\Cd L\smallsetminus\{0\}$ are pairwise disjoint because $a_i\wedge a\leq a^{c}\wedge a=0$ for every $i$.

\smallskip
\noindent
(3)$\Rightarrow$(4): If $r_i=r_j$, it follows from $a_i\wedge a_j=0$ that
\[
r_i\cdot\rchi_{a_i}+r_j\cdot\rchi_{a_j}=r_i\cdot(\rchi_{a_i}+\rchi_{a_j})=r_i\cdot\rchi_{a_i\vee a_j}.
\]
In addition, for $k\in\{1,\ldots ,n\}\smallsetminus\{i,j\}$, $(a_i\vee a_j)\wedge a_k=(a_i\wedge a_k)\vee(a_j\wedge a_k)=0$.

\smallskip
\noindent
$(4)\Rightarrow(1)$ is immediate.
\end{proof}

\begin{Proposition}\label{Other_formula}
Let  $f=\tsum_{i=1}^{n}r_{i}\cdot\rchi_{a_i}$ be a measurable simple function on $L$ with $r_1<\cdots<r_k<0\leq r_{k+1}<\cdots<r_n\in\Q$ and pairwise disjoint $a_1,\ldots,a_n\in\Cd L$. For each $p,q\in\Q$, $f(p,\inff)$ and $f(\inff,q)$ are given by the following formulas:

\medskip
\begin{center}
\begin{tabular}{cc|cc}
\toprule[1.25pt] \addlinespace[0.5em]
\multicolumn{2}{c}{$f(p,\inff)$} &  \multicolumn{2}{c}{$f(\inff,q)$}\\[2mm]
\toprule[1.25pt] \addlinespace[0.5em]
      $1$ & $\textrm{ if }p<r_1$ & $0$ & $\textrm{ if }q\leq r_1$\\[2mm]
      $a_1^{c}$ & $\textrm{ if }r_1\leq p<r_2$ & $a_1$ & $\textrm{ if }r_1<q\leq r_2$\\[2mm]
            $(a_1\vee a_2)^{c}$ & $\textrm{ if }r_2\leq p<r_3$ & $a_1\vee a_2$ & $\textrm{ if }r_2<q\leq r_3$\\[2mm]
      $\vdots$ & $\vdots$ & $\vdots$ & $\vdots$\\[2mm]
      $\displaystyle\big(\tbigvee_{i=1}^{k-1}a_i\big)^{c}$ &$\textrm{ if }r_{k-1}\leq p<r_{k}$ & $\displaystyle\tbigvee_{i=1}^{k-1}a_i$ &$\textrm{ if }r_{k-1}<q\leq r_{k}$\\[5mm]
      $\displaystyle\tbigvee_{i=k+1}^{n}a_i$ &$\textrm{ if }r_k\leq p<r_{k+1}$ & $\displaystyle\big(\tbigvee_{i=k+1}^{n}a_i\big)^{c}$ &$\textrm{ if }r_k<q\leq r_{k+1}$\\[2mm]
$\vdots$ & $\vdots$ & $\vdots$ & $\vdots$\\[2mm]
$a_{n-1}\vee a_n$ & $\textrm{ if }r_{n-2}\leq p<r_{n-1}$ & $(a_{n-1}\vee a_n)^{c}$ & $\textrm{ if }r_{n-2}< p\leq r_{n-1}$\\[2mm]
$a_n$ & $\textrm{ if }r_{n-1}\leq p<r_{n}$ & $a_n^{c}$ & $\textrm{ if }r_{n-1}< p\leq r_{n}$\\[2mm]
      $0$ & $\textrm{ if }r_n\leq p$ &
      $1$ &$\textrm{ if }r_n<q$\\[2mm]
      \bottomrule[1.25pt]
\end{tabular}
\end{center}
\end{Proposition}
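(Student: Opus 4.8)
The plan is to reduce to the canonical form of Proposition~\ref{FSimple_equiv} and then compute the two generators of $f$ by hand. I may assume the $a_i$ are nonzero, pairwise disjoint and $\bigvee_{i=1}^{n}a_i=1$ (the last being part of the canonical form; if the given blocks do not join to $1$, adjoin the term $0\cdot\rchi_{(\bigvee_i a_i)^{c}}=\pmb 0$, which leaves $f$ unchanged). Under $\bigvee_i a_i=1$ the relations $\bigvee_{i=j+1}^{n}a_i=\big(\bigvee_{i=1}^{j}a_i\big)^{c}$ convert every complement entry of the table into a join, so the whole table is equivalent to the single pair of formulas $f(\inff,q)=\bigvee_{i:\,r_i<q}a_i$ and $f(p,\inff)=\bigvee_{i:\,r_i>p}a_i$. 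I would prove these and then recover the table by splitting on the position of $p$ (resp.\ $q$) among the $r_i$ and rewriting the join over the first $j$ blocks as a complement. As a guide to the answer one checks that $\varphi(s)=\bigvee_{i:\,r_i<s}a_i$, with $c_s=\bigvee_{i:\,r_i\ge s}a_i$, is a $\sigma$-scale in the sense of Section~\ref{sigma_scales}: disjointness of the $a_i$ gives $\varphi(s)\wedge c_r=0$ for $s\le r$, while $c_r\vee\varphi(s)=\bigvee_i a_i=1$ for $r<s$ uses exactly $\bigvee_i a_i=1$; the function it generates has precisely the values above.

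The substance of the proof is then to identify that function with $f=\tsum_{i=1}^{n}r_i\cdot\rchi_{a_i}$, i.e.\ actually to evaluate $f(\inff,q)$. I would do this by induction on $n$ from the addition rule $(g+h)(\inff,q)=\bigvee_{t}g(\inff,t)\wedge h(\inff,q-t)$ and the single-term value, read off from Example~\ref{Characteristic_function} and the scalar and negation rules (1)(a),(b): $(r\cdot\rchi_{a})(\inff,t)$ is the join of $a$, taken exactly when $r<t$, and $a^{c}$, taken exactly when $0<t$. To make the induction close I would strengthen the hypothesis to carry the ``background''. Writing $s_m=\tsum_{i=1}^{m}r_i\cdot\rchi_{a_i}$ and $b_m=\bigvee_{i=1}^{m}a_i$, the claim is that $s_m(\inff,q)$ equals $\bigvee_{i\le m,\,r_i<q}a_i$ joined with $b_m^{c}$ when $0<q$ (recording that the partial sum is $\pmb0$ off $b_m$), with the base case $m=1$ being exactly the single-term value.

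In the step from $s_m$ to $s_{m+1}=s_m+r_{m+1}\cdot\rchi_{a_{m+1}}$ the decisive facts are $a_{m+1}\wedge a_i=0$ for $i\le m$ and $a_{m+1}\le b_m^{c}$: in the convolution every ``mixed'' meet then collapses to $0$, to a single block $a_i$, or to a term already present, and the background is updated from $b_m^{c}$ to $b_{m+1}^{c}$. At $m=n$ we have $b_n=1$, the background disappears, and $f(\inff,q)=\bigvee_{i:\,r_i<q}a_i$. The companion formula is obtained by applying the same induction to $-f=\tsum_i(-r_i)\cdot\rchi_{a_i}$ and using the negation rule $f(p,\inff)=(-f)(\inff,-p)$.

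The step I expect to be the real obstacle is this convolution bookkeeping: one must verify, according to the sign of the split $t+(q-t)=q$, that the disjointness of supports forces all cross terms to be absorbed into the expected join, and that the background block transforms correctly each time a new disjoint piece is adjoined. Everything else — the reduction to $\bigvee_i a_i=1$, the $\sigma$-scale check, and the passage between the compact formulas and the displayed table — is routine.
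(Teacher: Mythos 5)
Your induction (with the background term $b_m^{c}$ entering only when $0<q$) is exactly the right bookkeeping, and I see no obstacle to closing it; the gap is at the interface with the statement, and it is not cosmetic. At $m=n$ your strengthened hypothesis yields
\[
f(\inff,q)=\tbigvee_{\{i\,:\,r_i<q\}}a_i\ \ (q\leq0),
\qquad
f(\inff,q)=\tbigvee_{\{i\,:\,r_i<q\}}a_i\vee\big(\tbigvee_{i=1}^{n}a_i\big)^{c}\ \ (q>0),
\]
so on the strip $r_k<q\leq 0$ you obtain $\tbigvee_{i=1}^{k}a_i$, whereas the table asserts $\big(\tbigvee_{i=k+1}^{n}a_i\big)^{c}=\tbigvee_{i=1}^{k}a_i\vee\big(\tbigvee_{i=1}^{n}a_i\big)^{c}$. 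These agree only if $\tbigvee_{i=1}^{n}a_i=1$, a hypothesis Proposition \ref{Other_formula} does not make (it is precisely the extra hypothesis of Corollary \ref{SFunction_canonical}); dually for $f(p,\inff)$ on $r_k\leq p<0$. So the last step of your plan, ``recover the table by splitting on the position of $p$ among the $r_i$'', cannot be carried out: your (correct) formulas and the printed table genuinely differ there. Concretely, for $f=(-1)\cdot\rchi_{a_1}+1\cdot\rchi_{a_2}$ with $a_1,a_2\in\Cd L$ disjoint and $a_1\vee a_2\neq1$, the sum formula gives $f(-\tfrac{1}{2},\inff)=a_1^{c}=a_2\vee(a_1\vee a_2)^{c}$, while the table (the row $r_k\leq p<r_{k+1}$, here with $k=1$, $n=2$) claims $a_2$.

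The source of the problem is your opening reduction. Adjoining $0\cdot\rchi_{(\vee_i a_i)^{c}}$ leaves the \emph{function} unchanged, but not the \emph{statement}: it inserts a new coefficient $0$ and a new block into the data in terms of which the table is written. The table for the enlarged (covering) family is true, and is what your induction proves; but translated back to the original blocks it has one more row than the printed table, namely $\big(\tbigvee_{i=1}^{k}a_i\big)^{c}$ on $r_k\leq p<0$ and $\tbigvee_{i=k+1}^{n}a_i$ on $0\leq p<r_{k+1}$ (the background, where $f$ is $\pmb{0}$, belongs to $f(p,\inff)$ exactly when $p<0$). The same extra row is produced by the paper's own outline ($f=f_k+f_{n-k}$ followed by one application of the sum formula), so what your computation has actually uncovered is that the proposition as printed holds only under the covering hypothesis; without it, the transition row must be split at $0$. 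A sound blind proof should end by flagging this discrepancy (or by adding the hypothesis $\tbigvee_{i=1}^{n}a_i=1$, which turns the statement into Corollary \ref{SFunction_canonical}), not by asserting that the stated table follows --- as written, that final assertion is a step that fails.
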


\medskip
\begin{proof}
Since the proof of this result is lengthy but straightforward, we will only outline each of its three parts.
Firstly, we suppose that $0<r_1<\cdots<r_n$ and prove the claim by induction over $n\in\N$ (applying the formulas for sum in $\M(L)$). Next, we apply the previous step to deduce the cases when (I) $0\leq r_1<\cdots<r_n$ and (II) $r_1<\cdots<r_n<0$. Finally, if $r_1<\cdots<r_k<0\leq r_{k+1}<\cdots<r_n$, note that $f=f_k+f_{n-k}$, where
\[
f_k\coloneqq\tsum_{i=1}^{k}r_{i}\cdot\rchi_{a_i}\qquad\textrm{ and }\qquad f_{n-k}\coloneqq\tsum_{i=k+1}^{n}r_{i}\cdot\rchi_{a_i}.
\]
Thus, we can use (I) and (II) to obtain the formulas for $f_{n-k}$ and $f_k$, respectively. Then the result follows straightforwardly from the sum in $\M(L)$.
\end{proof}

In particular, if $a_1,\ldots ,a_n\in\Cd L$ are pairwise disjoint and $\tbigvee_{i=1}^{n}a_i=1$, we have $(\tbigvee_{i=1}^{j}a_i)^{c}=\tbigvee_{i=j+1}^{n}a_i$ for any $j\in\{1,\ldots ,n-1\}$. Hence, we get the following formulas describing the canonical representation of a simple function.

\begin{Corollary}\label{SFunction_canonical}
Let $f=\tsum_{i=1}^{n}r_{i}\cdot\rchi_{a_i}$ be a measurable simple function on $L$ with $r_1<\cdots <r_n\in\Q$ and pairwise disjoint $a_1,\ldots ,a_n\in\Cd L$ such that $\tbigvee_{i=1}^{n}a_i=1$. For each $p,q\in\Q$, $f(p,\inff)$ and $f(\inff,q)$ are given by the following formulas:

\medskip
\begin{center}
\begin{tabular}{cc|cc}
\toprule[1.25pt] \addlinespace[0.5em]
\multicolumn{2}{c}{$f(p,\inff)$} &  \multicolumn{2}{c}{$f(\inff,q)$}\\[2mm]
\toprule[1.25pt]\addlinespace[0.5em]
$1$ & if $r_1<p$ & $0$ & if $q\leq r_1$\\[3mm]
      $\displaystyle\tbigvee_{i=2}^{n}a_i$ &if $r_1\leq p<r_2$ &  $a_1$ & if $r_1<q\leq r_2$\\[5mm]
      $\displaystyle\tbigvee_{i=3}^{n}a_i$ &  if $r_2\leq p<r_3$ &     $a_1\vee a_2$ &  if $r_2<q\leq r_3$\\[3mm]
      $\vdots$ & $\vdots$ &
      $\vdots$ & $\vdots$\\[3mm]
      $a_n$ &  if $r_{n-1}\leq p<r_n$ &  $\displaystyle\tbigvee_{i=1}^{n-1}a_i$ &  if $r_{n-1}<q\leq r_n$\\[4mm]
      $0$ &  if $r_n\leq p $ &       $1$ & if $r_n<q$ \\[2mm]
\bottomrule[1.25pt]
\end{tabular}
\end{center}
\end{Corollary}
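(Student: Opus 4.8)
The plan is to deduce this Corollary directly from Proposition \ref{Other_formula}, the only extra ingredient being a complementation identity for the tails of the decomposition $\tbigvee_{i=1}^{n}a_i=1$. First I would observe that any strictly increasing sequence $r_1<\cdots<r_n$ of rationals already fits the sign pattern required in Proposition \ref{Other_formula}: letting $k\in\{0,1,\ldots,n\}$ be the number of strictly negative $r_i$, we have $r_1<\cdots<r_k<0\leq r_{k+1}<\cdots<r_n$, where the extreme values $k=0$ and $k=n$ correspond to all terms being nonnegative or all negative. Thus Proposition \ref{Other_formula} applies with no loss of generality, and it only remains to rewrite the entries of its two tables under the additional hypothesis $\tbigvee_{i=1}^{n}a_i=1$.

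The key step is the identity $\big(\tbigvee_{i=1}^{j}a_i\big)^{c}=\tbigvee_{i=j+1}^{n}a_i$ for each $j\in\{1,\ldots,n-1\}$, which is the displayed remark preceding the statement. To prove it, set $b\coloneqq\tbigvee_{i=1}^{j}a_i$ and $b'\coloneqq\tbigvee_{i=j+1}^{n}a_i$. Then $b\vee b'=\tbigvee_{i=1}^{n}a_i=1$ by hypothesis, while the finite distributive law together with the pairwise disjointness of the $a_i$ gives
\[
b\wedge b'=\tbigvee_{1\leq i\leq j<i'\leq n}(a_i\wedge a_{i'})=0.
\]
Hence $b'$ is a complement of $b$; in particular $b$ is complemented and $b^{c}=b'$, as required.

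Finally I would substitute this identity into the two tables of Proposition \ref{Other_formula}. On the range $r_j\leq p<r_{j+1}$, that proposition records $f(p,\inff)$ as $\big(\tbigvee_{i=1}^{j}a_i\big)^{c}$ for the indices below the sign break and as $\tbigvee_{i=j+1}^{n}a_i$ at and above it; by the identity these two expressions coincide and equal $\tbigvee_{i=j+1}^{n}a_i$, so the column collapses to the single uniform value displayed in the Corollary (with the convention that the empty tail $\tbigvee_{i=n+1}^{n}a_i=0$ gives the value on $r_n\leq p$ and the full tail $\tbigvee_{i=1}^{n}a_i=1$ gives the value on $p<r_1$). The same substitution turns every complemented entry of the $f(\inff,q)$ column into the corresponding head $\tbigvee_{i=1}^{j}a_i$ on the range $r_j<q\leq r_{j+1}$, and the remaining endpoint rows are transcribed directly from Proposition \ref{Other_formula}.

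I do not expect a genuine obstacle here: the entire content is the complementation identity, and once the complements are rewritten as tails of the join, the sign break at $k$ in Proposition \ref{Other_formula} disappears and the two tables merge into the single uniform pattern of the statement. The only point deserving care is confirming that the meet $b\wedge b'$ vanishes, which relies on distributivity holding for these finite joins in the $\sigma$-frame $L$.
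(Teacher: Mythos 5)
Your proposal is correct and takes essentially the same route as the paper: there the corollary is presented as an immediate consequence of Proposition \ref{Other_formula} together with the identity $\big(\tbigvee_{i=1}^{j}a_i\big)^{c}=\tbigvee_{i=j+1}^{n}a_i$ (stated in the sentence preceding the corollary), which is exactly the complementation identity you prove. Your additional details --- verifying via distributivity and pairwise disjointness that the head and tail joins are complements, and checking that the sign-break cases $k=0$ and $k=n$ are covered --- simply make explicit what the paper leaves implicit.
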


\medskip
\begin{Remark}
It follows from the above corollary that each simple function $f\colon\reals\to L$ is uniquely determined by its canonical form. Indeed, if $\tsum_{i=1}^{n}r_{i}\cdot\rchi_{a_i}$ and $\tsum_{i=1}^{m}s_{i}\cdot\rchi_{b_i}$ are canonical forms of $f$, then $n=m$ and $r_j=s_j$ for all $1\leq j\leq n$. Besides,
\begin{gather*}
a_1=f(\inff,r_2)=b_1,\\
a_j=\big(\tbigvee_{i=1}^{j-1}a_i\vee\tbigvee_{i=j+1}^{n}a_i\big)^{c}=\big(f(\inff,r_{j})\vee f(r_{j},\inff)\big)^{c}=\big(\tbigvee_{i=1}^{j-1}b_i\vee\tbigvee_{i=j+1}^{n}b_i\big)^{c}=b_j,\\
\textrm{ and }a_n=f(r_{n-1},\inff)=b_n.
\end{gather*}
\end{Remark}

Let
\[\SM(L)\coloneqq\{f\in\M(L)\mid f\textrm{ is simple}\}.\]

\begin{Proposition}\label{Prop_SimpleFunctions}
If $f,g\in\SM(L)$ and $\lambda\in\Q$, then $\lambda\cdot f$, $-f$, $f\cdot g$, $f+g$ and $|f|$ are simple functions. Consequently, $\SM(L)$ is a subring of $\M(L)$.
\end{Proposition}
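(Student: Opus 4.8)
The plan is to verify closure under each operation directly at the level of the finite linear combinations $\sum_{i=1}^{n}r_i\cdot\rchi_{a_i}$, exploiting that $\M(L)$ is already known to be a commutative ring (from \cite{Bernardes2023}) and that the elementary identities for characteristic functions are collected in Lemma \ref{Characteristic_prop}. Throughout I would write $f=\sum_{i=1}^{n}r_i\cdot\rchi_{a_i}$ and $g=\sum_{j=1}^{m}s_j\cdot\rchi_{b_j}$ with $r_i,s_j\in\Q$ and $a_i,b_j\in\Cd L$. Most of the operations then reduce to formal ring manipulations, and only the modulus requires a genuine computation.

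Scalar multiplication, negation and addition are immediate. Since scalar multiplication distributes over finite sums and $\lambda\cdot(r_i\cdot\rchi_{a_i})=(\lambda r_i)\cdot\rchi_{a_i}$, I would get $\lambda\cdot f=\sum_i(\lambda r_i)\cdot\rchi_{a_i}$, again a rational combination of characteristic functions; the case $\lambda=-1$ handles $-f$. For the sum, $f+g=\sum_i r_i\cdot\rchi_{a_i}+\sum_j s_j\cdot\rchi_{b_j}$ is by definition a finite rational combination of characteristic functions, hence simple.

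The product is the first place needing a real calculation. Using distributivity of the ring multiplication over addition, the compatibility of scalar multiplication with the product (so that $(r_i\cdot\rchi_{a_i})\cdot(s_j\cdot\rchi_{b_j})=(r_is_j)\cdot(\rchi_{a_i}\cdot\rchi_{b_j})$), and the identity $\rchi_{a_i}\cdot\rchi_{b_j}=\rchi_{a_i\wedge b_j}$ of Lemma \ref{Characteristic_prop}(2), I would obtain
\[
f\cdot g=\sum_{i=1}^{n}\sum_{j=1}^{m}(r_is_j)\cdot\rchi_{a_i\wedge b_j}.
\]
As $\Cd L$ is a sublattice, each $a_i\wedge b_j$ is complemented and each $r_is_j$ is rational, so $f\cdot g$ is simple.

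For the modulus I would first replace $f$ by its canonical form $\sum_{i=1}^{n}r_i\cdot\rchi_{a_i}$ via Proposition \ref{FSimple_equiv}, so that the $a_i$ are pairwise disjoint with $\bigvee_i a_i=1$ and $r_1<\cdots<r_n$. Using $(f\vee g)(p,\inff)=f(p,\inff)\vee g(p,\inff)$ together with Corollary \ref{SFunction_canonical} (or Proposition \ref{Other_formula}), a direct check on the generators shows $f^{+}=\sum_{r_i>0}r_i\cdot\rchi_{a_i}$ and $f^{-}=\sum_{r_i<0}(-r_i)\cdot\rchi_{a_i}$, whence $|f|=f^{+}+f^{-}=\sum_{i=1}^{n}|r_i|\cdot\rchi_{a_i}$ is simple. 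Finally, since $\pmb{0}=\rchi_{0_L}$ (Lemma \ref{Characteristic_prop}(1)) lies in $\SM(L)$ and the set is closed under addition, negation and multiplication, it is a subring of $\M(L)$ (also containing $\pmb{1}=\rchi_{1_L}$). The main obstacle is precisely the modulus: it is the only operation not given by a single ring identity, forcing the passage to the canonical form and the explicit computation of $f^{+}$ and $f^{-}$ through the join formula, whereas every other closure follows formally from the ring structure and Lemma \ref{Characteristic_prop}.
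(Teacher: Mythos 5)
Your proof is correct, and for the scalar multiple, negation, product, modulus, and the subring conclusion it is essentially the paper's own argument: everything reduces to formal manipulations of rational combinations of characteristic functions via the ring structure of $\M(L)$ and Lemma \ref{Characteristic_prop}, with the modulus handled, exactly as in the paper, by passing to the canonical form and identifying $f^{+}$ and $f^{-}$ with the subsums of nonnegative and negative coefficients. The one place you genuinely diverge is the sum. You treat $f+g$ as immediate from Definition \ref{Def_Simple_function}, which is valid: a sum of two finite rational combinations of characteristic functions is literally such a combination. The paper instead expands over a common refinement: using the canonical forms and $\pmb{1}=\rchi_{1_L}=\tsum_{j=1}^{m}\rchi_{b_j}$ (Lemma \ref{Characteristic_prop}(3)), it writes $r_i\cdot\rchi_{a_i}=\tsum_{j=1}^{m}r_i\cdot\rchi_{a_i\wedge b_j}$ and obtains
\[
f+g=\tsum_{i=1}^{n}\tsum_{j=1}^{m}(r_i+s_j)\cdot\rchi_{a_i\wedge b_j},
\]
a representation whose terms $a_i\wedge b_j$ are pairwise disjoint. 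For the closure statement itself this extra computation buys nothing, but it is not wasted effort: the proofs of Proposition \ref{SimpleProp05} and of the weakened monotonicity proposition following Proposition \ref{SimpleProp07} explicitly invoke the expression of $g+h$ obtained in the proof of Proposition \ref{Prop_SimpleFunctions}. So your shortcut proves this proposition correctly and more economically, but the refined disjoint representation is part of what the paper's proof is designed to deliver, and omitting it would leave those later arguments without their stated reference.
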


\begin{proof}
Suppose that $f=\tsum_{i=1}^{n}r_i\cdot\rchi_{a_i}$ and $g=\tsum_{j=1}^{m}s_j\cdot\rchi_{b_j}$ are the canonical representations of $f$ and $g$, respectively.
As a consequence of $\M(L)$ being an algebra over $\Q$, we have that
\[
\lambda\cdot f=\tsum_{i=1}^{n}\lambda\cdot(r_i\cdot\rchi_{a_i})=\tsum_{i=1}^{n}(\lambda r_i)\cdot\rchi_{a_i}
\]
and, in particular, for $\lambda=-1$,
\[
-f=-\big(\tsum_{i=1}^{n}r_i\cdot\rchi_{a_i}\big)=\tsum_{i=1}^{n}(-r_i)\cdot\rchi_{a_i}.
\]

Moreover, by Lemma \ref{Characteristic_prop}, we have
\[
f\cdot g=\tsum_{i=1}^{n}\tsum_{j=1}^{m}(r_is_j)\cdot(\rchi_{a_i}\cdot\rchi_{b_j})=\tsum_{i=1}^{n}\tsum_{j=1}^{m}(r_is_j)\cdot\rchi_{a_i\wedge b_j}.
\]

As $\pmb{1}=\rchi_{1_L}=\rchi_{\vee_{j=1}^{m}b_j}=\sum_{j=1}^{m}\rchi_{b_j}$, for each $i\in\{1,\ldots ,n\}$,
\[
r_i\cdot\rchi_{a_i}=r_i\cdot\rchi_{a_i}\cdot\big(\tsum_{j=1}^{m}\rchi_{b_j}\big)=\tsum_{j=1}^{m}r_i\cdot\rchi_{a_i\wedge b_j}.
\]
Similarly, $s_j\cdot\rchi_{b_j}=\tsum_{i=1}^{n}s_j\cdot\rchi_{a_i\wedge b_j}$ for each $j\in\{1,\ldots ,m\}$.
Therefore,
\[
\begin{split}
f+g&=\tsum_{i=1}^{n}r_i\cdot\rchi_{a_i}+\tsum_{j=1}^{m}s_j\cdot\rchi_{b_j}=\tsum_{i=1}^{n}\tsum_{j=1}^{m}r_i\cdot\rchi_{a_i\wedge b_j}+\tsum_{j=1}^{m}\tsum_{i=1}^{n}s_j\cdot\rchi_{a_i\wedge b_j}\\
&=\tsum_{i=1}^{n}\tsum_{j=1}^{m}(r_i+s_j)\cdot\rchi_{a_i\wedge b_j}.
\end{split}
\]

Finally, suppose with no loss of generality  that $r_1<\cdots < r_k<0\leq r_{k+1}<\cdots< r_n$. It is easy to check that
\[
f^{-}=-\big(\tsum_{i=1}^{k}r_i\cdot\rchi_{a_i}\big)\,\,\,\textrm{ and }\,\,\, f^{+}=\tsum_{i=k+1}^{n}r_i\cdot\rchi_{a_i}.
\]
Hence, $|f|=f^{+}+f^{-}$ is also a simple function.
\end{proof}

\begin{Remark}
For each $\sigma$-frame $L$, we have
\[\SM(L)\subseteq\M(L)\cap\SM(\Cong(L))\subseteq\F(L)\subseteq\eF(L).\]
\end{Remark}

\section{Decomposition of nonnegative measurable
functions}\label{DecompositionNonnegativeFunctions_Section}

A well-known result in measure theory states that any nonnegative measurable function is a pointwise limit of an increasing sequence of nonnegative simple functions (\cite[Theorem 1.12]{EvansGariepy2015}). In this section, we establish a counterpart of this result for point-free measurable functions. This is one of the most important results about the class of measurable simple functions. For that we need the following technical lemma.

\begin{Lemma}\label{DecompositionNonnegativeFunction_aux}
Let $f\colon \ereals\to L$ be a measurable function on $L$ such that $f(r,\inff)\in\Cd L$ for all $r\in\Q$.
Consider the sequence $(f_k\colon \ereals\to L)_{k\in\N}$ in $\SM(L)$ defined by
\begin{align*}
&a_1\coloneqq f(1,\inff),\,\,\,f_1\coloneqq\rchi_{a_1}\\
k\geq 2\colon \,\,&a_k\coloneqq(f-f_{k-1})(\tfrac{1}{k},\inff),\,\,\,f_k\coloneqq\tsum_{i=1}^{k}\tfrac{1}{i}\cdot\rchi_{a_i}.
\end{align*}
For each $k\in\N$, $f_k(p,\inff)$ and $f_k(\inff,q)$ $(p,q\in\Q)$ are given by

\medskip
\begin{center}
\begin{tabular}{cc|cc}
\toprule[1.25pt] \addlinespace[0.5em]
\multicolumn{2}{c}{$f_k(p,\inff)$} &  \multicolumn{2}{c}{$f_k(\inff,q)$}\\[2mm]
\toprule[1.25pt]\addlinespace[0.5em]
$1$ & if $p< t_0$ & $0$ &if $q\leq t_0$\\[2mm]
$f(t_1,\inff)$ & if $t_0\leq p< t_1$ & $f(t_1,\inff)^{c}$ & if $t_0<q\leq t_1$\\[2mm]
$f(t_2,\inff)$ & if $t_1\leq p<t_2$  & $f(t_2,\inff)^{c}$ & if $t_1<q\leq t_2$\\[2mm]
$\vdots$ & $\vdots$ & $\vdots$ &$\vdots$\\[2mm]
$f(t_j,\inff)$ & if $t_{j-1}\leq p< t_j$ &
$f(t_j,\inff)^{c}$ & if $t_{j-1}<q\leq t_j$\\[2mm]
$\vdots$ &$\vdots$ & $\vdots$ &$\vdots$\\[2mm]
$f(t_n,\inff)$ & if $t_{n-1}\leq p< t_n $ & $f(t_n,\inff)^{c}$ & if $t_{n-1}<q\leq t_n$\\[2mm]
$0$ &$\textrm{if }p\geq t_n$
& $1$ & if $q>t_n$ \\[2mm]
\bottomrule[1.25pt]
\end{tabular}
\end{center}

\medskip\noindent
for some $n\in\N$, $t_0, t_1,\ldots ,t_n\in\Q$ satisfying
\[
t_0\coloneqq0<t_1\coloneqq\tfrac{1}{k}<t_2<\cdots<t_{n-2}<t_{n-1}\coloneqq\tsum_{i=1}^{k-1}\tfrac{1}{i}<t_n\coloneqq\tsum_{i=1}^{k}\tfrac{1}{i}
\]
and $t_j-t_{j-1}\leq\tfrac{1}{k}$ for every $j\in\{1,\ldots ,n\}$.
\end{Lemma}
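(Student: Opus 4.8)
The plan is to induct on $k$. The guiding picture is that $f_k$ rounds $f$ downwards to a finite rational grid: at each point $f_{k-1}$ equals $s_{j-1}$ exactly where $f\in(s_{j-1},s_j]$, and the greedy step $f_k=f_{k-1}+\tfrac1k\rchi_{a_k}$ (with $a_k=(f-f_{k-1})(\tfrac1k,\inff)$ the region where the remainder still exceeds $\tfrac1k$) refines this grid by inserting the single new point $s_{j-1}+\tfrac1k$ into each gap $(s_{j-1},s_j)$ that is wider than $\tfrac1k$. This is exactly what creates the grid $t_0<\cdots<t_n$ and the bound $t_j-t_{j-1}\le\tfrac1k$. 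For the base case $k=1$ we have $f_1=\rchi_{a_1}$ with $a_1=f(1,\inff)$, and Example \ref{Characteristic_function} gives the table at once, with $n=1$, $t_0=0$, $t_1=1$. For the inductive step I would assume the statement for $f_{k-1}$, whose grid is $0=s_0<s_1=\tfrac1{k-1}<\cdots<s_m=\tsum_{i=1}^{k-1}\tfrac1i$ with gaps $s_j-s_{j-1}\le\tfrac1{k-1}$.

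First I would record how one greedy step acts on level sets. Since $\tfrac1k\rchi_{a_k}$ takes only the values $0$ (on $a_k^{c}$) and $\tfrac1k$ (on $a_k$), the sum formula of Section \ref{background_MeasFunctions}, together with the scaling and negation rules and the directed-join relations $f_{k-1}(p,\inff)=\tbigvee_{r>p}f_{k-1}(r,\inff)$ and $f_{k-1}(\inff,q)=\tbigvee_{s<q}f_{k-1}(\inff,s)$, collapses to the one-step identities
\[
f_k(p,\inff)=f_{k-1}(p,\inff)\vee\big(a_k\wedge f_{k-1}(p-\tfrac1k,\inff)\big),\qquad f_k(\inff,q)=f_{k-1}(\inff,q-\tfrac1k)\vee\big(a_k^{c}\wedge f_{k-1}(\inff,q)\big).
\]
Next I would compute $a_k$ in closed form. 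Expanding $a_k=(f-f_{k-1})(\tfrac1k,\inff)$ by the negation and sum formulas (legitimate here because $f_{k-1}$ is finite) and substituting the inductive description of $f_{k-1}(\inff,\cdot)$ gives
\[
a_k=\tbigvee_{j=1}^{m}\big(f(s_{j-1}+\tfrac1k,\inff)\wedge f(s_j,\inff)^{c}\big)\vee f(s_m+\tfrac1k,\inff),
\]
which lies in $\Cd L$ since every $f(r,\inff)$ does; in particular $f_k\in\SM(L)$.

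Finally I would substitute the inductive formulas for $f_{k-1}$ and this expression for $a_k$ into the one-step identities and simplify range by range in $p$ (and dually in $q$). Every simplification is an instance of the antitonicity of $f(\cdot,\inff)$: for $r\ge r'$ we have $f(r,\inff)\le f(r',\inff)$, hence $f(r,\inff)\wedge f(r',\inff)^{c}=0$ and $f(r,\inff)\wedge f(r',\inff)=f(r,\inff)$, which makes the finite join defining $a_k$ collapse to a single surviving term on each range. Reading off the outcome shows that $f_k(p,\inff)$ is constant, equal to some $f(t,\inff)$, on each cell of the grid obtained from $s_0<\cdots<s_m$ by inserting $s_{j-1}+\tfrac1k$ in every gap with $s_j-s_{j-1}>\tfrac1k$. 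This grid has smallest positive point $t_1=s_0+\tfrac1k=\tfrac1k$, penultimate point $t_{n-1}=s_m=\tsum_{i=1}^{k-1}\tfrac1i$ and largest point $t_n=s_m+\tfrac1k=\tsum_{i=1}^{k}\tfrac1i$, as required. The gap bound is the one genuinely quantitative point and is clean: an unsplit gap has $s_j-s_{j-1}\le\tfrac1k$ by the splitting criterion, while a split gap yields the two lengths $\tfrac1k$ and $(s_j-s_{j-1})-\tfrac1k\le\tfrac1{k-1}-\tfrac1k=\tfrac1{k(k-1)}\le\tfrac1k$.

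The main obstacle is entirely the bookkeeping in this last step: for each range of $p$ one must determine which term of the join defining $a_k$ survives and how it combines with $f_{k-1}(p,\inff)$ and $f_{k-1}(p-\tfrac1k,\inff)$. The subtlety is that when the gaps are short $p-\tfrac1k$ may drop several cells below $p$, so the collapse of the joins must be justified uniformly rather than cell by cell; once the closed form of $a_k$ and the two one-step identities are in hand, however, each step is a direct application of the antitonicity and complementation relations above, and the $(\inff,q)$ column follows from the $(p,\inff)$ column by the symmetric computation.
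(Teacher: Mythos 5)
Your proposal is correct and follows essentially the same route as the paper's proof: induction on $k$ with the characteristic-function example as base case, the same closed-form expression for $a_k$ as a finite join of elements $f(s_{j-1}+\tfrac1k,\inff)\wedge f(s_j,\inff)^{c}$, the one-step sum identities extracted from the addition formula, and a range-by-range collapse of the joins via antitonicity and complementation, including the same treatment of the quantitative gap bound (split gaps give lengths $\tfrac1k$ and at most $\tfrac1{k(k-1)}$). The only cosmetic difference is that the paper deduces the $(p,\inff)$ column from the $(\inff,q)$ column via $f_k(p,\inff)=\tbigvee_{r>p}f_k(\inff,r)^{*}$ rather than by a symmetric computation, and your "uniform rather than cell by cell" concern is exactly what the paper handles with $m\coloneqq\max\{i\mid t_i<q-\tfrac1k\}$.
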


\begin{proof}
We shall verify that the sequence $(f_k)_{k\in\N}$ is well-defined and given by the formulas above. Moreover, since $f_k(p,\inff)=\tbigvee_{r>p}f_k(r,\inff)=\tbigvee_{r>p}f_k(\inff,r)^{*}$, we just need to prove the formula for $f_k(\inff,q)$ by induction over $k\in\N$.

As $a_1\in\Cd L$, it follows from Example \ref{Characteristic_function} that the claim holds for $k=1$.
Now, suppose that the claim is true for $k-1\geq 1$. In other words, suppose that $a_1,\ldots,a_{k-1}\in\Cd L$ and
\[
f_{k-1}(\inff,q)=
\begin{cases}
0 &\textrm{if }q\leq 0\eqqcolon t_0\\[2mm]
f(t_1,\inff)^{c} &\textrm{if }t_0<q\leq\tfrac{1}{k-1}\eqqcolon t_1\\[2mm]
f(t_2,\inff)^{c} &\textrm{if }t_1<q\leq t_2\\[2mm]
\vdots &\,\,\,\vdots\\[2mm]
f(t_j,\inff)^{c} &\textrm{if }t_{j-1}<q\leq t_j\\[2mm]
\vdots &\,\,\,\vdots\\[2mm]
f(t_n,\inff)^{c} &\textrm{if }t_{n-1}<q\leq\tsum_{i=1}^{k-1}\tfrac{1}{i}\eqqcolon t_n\\[2mm]
1 &\textrm{if }q>t_n,
\end{cases}
\]
with $t_j-t_{j-1}\leq\tfrac{1}{k-1}$ for every $j\in\{1,\ldots ,n\}$.
Then, for each $k\in\N$,
\[
a_k\coloneqq(f-f_{k-1})(\tfrac{1}{k},\inff)=\tbigvee_{i=1}^{n}\big(f(t_i,\inff)^{c}\wedge f(t_{i-1}+\tfrac{1}{k},\inff)\big)\vee f(t_n+\tfrac{1}{k},\inff)
\]
is a complemented element in $L$ (since $\Cd L$ is a sublattice of $L$) and we can define
\[f_k\coloneqq\tsum_{i=1}^{k}\tfrac{1}{i}\cdot\rchi_{a_i}=f_{k-1}+\tfrac{1}{k}\cdot\rchi_{a_k}.\]

Note that
\[
\rchi_{a_k}(\inff, k(q-r))=
\begin{cases}
1 &\textrm{if }  r< q-\tfrac{1}{k}\\
a_k^{c} &\textrm{if } q-\tfrac{1}{k}\leq r< q\\
0 &\textrm{if }r\geq q.
\end{cases}
\]
Hence, by the algebraic operations described in Section \ref{background_MeasFunctions},
\begin{align*}
f_k(\inff,q)&=\tbigvee_{r\in\Q}f_{k-1}(\inff,r)\wedge\rchi_{a_k}(\inff,k(q-r))\\
&=\tbigvee_{0<r<q}f_{k-1}(\inff,r)\wedge\rchi_{a_k}(\inff,k(q-r)).
\end{align*}

\noindent
I. For $q\leq 0$, it is clear that $f_k(\inff,q)=0$.

\noindent
II. Suppose that $t_0=0<q\leq t_1=\tfrac{1}{k-1}$.
\smallskip

(i) If $0<q\leq\tfrac{1}{k}$, we have $q-\tfrac{1}{k}\leq0<q$. So
\[f_k(\inff,q)=f(t_1,\inff)^{c}\wedge a_k^{c}=(f(t_1,\inff)\vee a_k)^{c}=f(\tfrac{1}{k},\inff)^{c}\]
in view of the fact that
\[f(t_1,\inff)\vee a_k=\tbigvee_{i=1}^{n}\big(f(t_1,\inff)\vee f(t_{i-1}+\tfrac{1}{k},\inff)\big)=f(t_1,\inff)\vee f(t_0+\tfrac{1}{k},\inff).\]

(ii) Otherwise, when $\tfrac{1}{k}<q\leq t_1$, we get $0<q-\tfrac{1}{k}<q\leq t_1$ and
\[f_k(\inff,q)=(f(t_1,\inff)^{c}\wedge a_k^{c})\vee(f(t_1,\inff)^{c}\wedge 1)=f(t_1,\inff)^{c}.\]

\noindent
III. Now fix $j=2,\ldots ,n$ and consider $t_{j-1}<q\leq t_j$. As $0=t_0<q-\tfrac{1}{k}$, there is $m\coloneqq\max\{i\in\{0,\ldots ,n\}\mid t_i<q-\tfrac{1}{k}\}.$
\smallskip

(i) If $t_{j-1}<q\leq (t_{j-1}+\tfrac{1}{k})\wedge t_j$,
note that $m\leq j-2$ and $q-\tfrac{1}{k}\leq t_{m+1}\wedge t_{j-1}$. Thus,
\[
\begin{split}
f_k(\inff,q)&=f(t_{m+1},\inff)^{c}\vee(f(t_j,\inff)^{c}\wedge a_k^{c})\\
&=[f(t_{m+1},\inff)\wedge(f(t_j,\inff)\vee a_k)]^{c}=f(t_j\wedge(t_{j-1}+\tfrac{1}{k}),\inff)^{c}.
\end{split}
\]

The proof of the last equality is rather technical. Succinctly we have
\[
\begin{split}
f(t_j,\inff)\vee a_k&=\tbigvee_{i=1}^{n}\Big([f(t_j,\inff)\vee f(t_i,\inff)^{c}]\wedge[f(t_j,\inff)\vee f(t_{i-1}+\tfrac{1}{k},\inff)]\Big)\\
&=\tbigvee_{i=1}^{n}\Big([f(t_j,\inff)\vee f(t_i,\inff)^{c}]\wedge f(t_j\wedge(t_{i-1}+\tfrac{1}{k}),\inff) \Big)\\
&=f(t_j\wedge(t_{j-1}+\tfrac{1}{k}),\inff)\vee\tbigvee_{i=1}^{j-1}\big(f(t_i,\inff)^{c}\wedge f(t_j\wedge(t_{i-1}+\tfrac{1}{k}),\inff)\big).
\end{split}
\]
Since $t_{m+1}\leq t_j\wedge(t_{j-1}+\tfrac{1}{k})$, $f(t_j\wedge(t_{j-1}+\tfrac{1}{k}),\inff)\leq f(t_{m+1},\inff)$. And
\[
f(t_{m+1},\inff)\wedge\tbigvee_{i=1}^{j-1}\big(f(t_i,\inff)^{c}\wedge f(t_j\wedge(t_{i-1}+\tfrac{1}{k}),\inff)\big)=0\\
\]
in consequence of $f(t_j\wedge(t_{i-1}+\tfrac{1}{k}),\inff)\leq f(t_{j-1}\wedge(t_{i-1}+\tfrac{1}{k}),\inff)$,
$f(t_{m+1},\inff)\wedge f(t_i,\inff)^{c}=0$ for $0\leq i\leq m+1$,
and $t_{j-1}< q\leq t_{m+1}+\tfrac{1}{k}\leq t_{i-1}+\tfrac{1}{k}$ for $m+2\leq i\leq j-1$.
\smallskip

(ii) If $t_{j-1}+\tfrac{1}{k}< t_j$, take $t_{j-1}+\tfrac{1}{k}<q\leq t_j$. Then $0<t_1<\cdots<t_{j-1}<q-\tfrac{1}{k}<q$ and
\[
f_k(\inff,q)=f(t_j,\inff)^{c}.
\]

\noindent
IV. When $t_n<q\leq t_n+\tfrac{1}{k}$, setting $m\coloneqq\max\{i\in\{0,\ldots ,n\}\mid t_i<q-\tfrac{1}{k}\}$, we have $0<t_1<\cdots<t_m<q-\tfrac{1}{k}\leq t_{m+1}\leq t_n<q$ and
\[
\begin{split}
f_k(\inff,q)=f(t_{m+1},\inff)^{c}\vee(f(t_n,\inff)^{c}\wedge a_k^{c})\vee a_k^{c}&=[f(t_{m+1},\inff)\wedge a_k]^{c}\\
&=f(t_n+\tfrac{1}{k},\inff)^{c}.
\end{split}
\]
The last equality follows from $f(t_{m+1},\inff)\wedge f(t_i,\inff)^{c}=0$ for $0\leq i\leq m+1$, and $t_n< q\leq t_{m+1}+\tfrac{1}{k}\leq t_{i-1}+\tfrac{1}{k}$ for $m+2\leq i\leq n$.
\smallskip

\noindent
V. Finally, for $q>t_n+\tfrac{1}{k}$, we have $q-\tfrac{1}{k}>t_n$ and there is $s\in\Q$ such that $q-\tfrac{1}{k}>s>t_n$. Therefore,
\[f_k(\inff,q)\geq f_{k-1}(\inff,s)\wedge\rchi_{a_k}(\inff,k(q-s))=1.\]

This concludes the proof that $f_k(\inff,q)$ satisfies the required formula. The remaining facts are straightforward.
\end{proof}

With the sequence given above, we can tackle the problem of approximating a nonnegative measurable function by simple functions.

\begin{Theorem}\label{DecompositionNonnegativeMeasFunction}
Let $f\colon \ereals\to L$ be a nonnegative measurable function such that $f(r,\inff)\in\Cd L$ for all $r\in\Q$. If any countable join in $\{f(r,\inff)\mid r\in\Q\}$ is complemented in $L$, then we can write
\[
f=\lim_{k\rightarrow+\infty}\tsum_{i=1}^{k}\tfrac{1}{i}\cdot\rchi_{a_i}
\]
for some $a_i\in\Cd L$.
\end{Theorem}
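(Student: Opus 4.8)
The plan is to take the explicit sequence $(f_k)_{k\in\N}$ in $\SM(L)$ and the elements $a_i\in\Cd L$ already produced in Lemma \ref{DecompositionNonnegativeFunction_aux}, and to prove that $f=\sup_{k\in\N}f_k$. Since $f_k=f_{k-1}+\tfrac1k\cdot\rchi_{a_k}$ and $\tfrac1k\cdot\rchi_{a_k}\ge\pmb{0}$, the sequence is increasing; so by the remark on monotone sequences it suffices to show that $\sup_{k\in\N}f_k$ exists and equals $f$, for then $\lim_{k\to+\infty}f_k=\sup_{k\in\N}f_k=f$.

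First I would establish existence of the supremum via Proposition \ref{Countable_joins}. By the table in Lemma \ref{DecompositionNonnegativeFunction_aux} every value $f_k(p,\inff)$ is one of $0$, $1$, or $f(t_j,\inff)$ for some rational $t_j$; hence $\tbigvee_{k\in\N}f_k(p,\inff)$ is, up to the constants $0,1$, a countable join of elements of $\{f(r,\inff)\mid r\in\Q\}$, which is complemented by hypothesis. Proposition \ref{Countable_joins} then guarantees that $\sup_{k\in\N}f_k$ exists in $\eM(L)$, with $(\sup_{k}f_k)(p,\inff)=\tbigvee_{k}f_k(p,\inff)$ and $(\sup_k f_k)(\inff,q)=\tbigvee_{r<q}\big(\tbigvee_k f_k(r,\inff)\big)^{c}$.

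The core computation is $\tbigvee_{k\in\N}f_k(p,\inff)=f(p,\inff)$ for every $p\in\Q$. For $p<0$ both sides equal $1$ by nonnegativity of $f$. For $p\ge0$ I would exploit the two key features of the partition $t_0<\cdots<t_n$ in the Lemma: its mesh satisfies $t_j-t_{j-1}\le\tfrac1k$, and its right endpoint $t_n=\tsum_{i=1}^k\tfrac1i$ tends to $+\infty$ because the harmonic series diverges. Thus for all large $k$ the point $p$ lies in some subinterval $[t_{j-1},t_j)$ with $p<t_j\le p+\tfrac1k$, so that $f_k(p,\inff)=f(t_j,\inff)$ obeys $f(p+\tfrac1k,\inff)\le f_k(p,\inff)\le f(p,\inff)$, using that $s\mapsto f(s,\inff)$ is decreasing. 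Taking joins over $k$ and invoking $f(p,\inff)=\tbigvee_{r>p}f(r,\inff)$ from relation ($R'_3$) yields $\tbigvee_k f_k(p,\inff)=f(p,\inff)$. This is the step I expect to be the main obstacle, since it is where the specific shape of the sequence from Lemma \ref{DecompositionNonnegativeFunction_aux} (fine mesh together with divergent endpoint) is genuinely used.

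It then remains to check $\tbigvee_{r<q}f(r,\inff)^{c}=f(\inff,q)$, which together with the previous paragraph gives $(\sup_k f_k)(\inff,q)=f(\inff,q)$. For the inequality $\le$, relation ($R'_2$) gives $f(r,\inff)\vee f(\inff,q)=1$ for $r<q$, whence $f(r,\inff)^{c}\le f(\inff,q)$. For the reverse inequality I would write $f(\inff,q)=\tbigvee_{s<q}f(\inff,s)$ by ($R'_4$) and use ($R'_1$), namely $f(s,\inff)\wedge f(\inff,s)=0$, to obtain $f(\inff,s)\le f(s,\inff)^{c}\le\tbigvee_{r<q}f(r,\inff)^{c}$ for each $s<q$. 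Since $\sup_k f_k$ and $f$ now agree on all generators $(p,\inff)$ and $(\inff,q)$, they coincide in $\eM(L)$, and the increasing sequence $(f_k)$ converges to $f$, as claimed.
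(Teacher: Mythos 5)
Your proposal is correct and follows essentially the same route as the paper's own proof: the same sequence from Lemma \ref{DecompositionNonnegativeFunction_aux}, existence of $\sup_{k\in\N}f_k$ via the complementedness hypothesis and Proposition \ref{Countable_joins}, and the same core sandwich argument combining the mesh bound $t_j-t_{j-1}\leq\tfrac{1}{k}$ with the divergence of the harmonic series to get $\tbigvee_{k}f_k(p,\inff)=f(p,\inff)$. The only cosmetic difference is your explicit final check on the generators $(\inff,q)$, which the paper omits because agreement on all $(p,\inff)$ already forces equality in $\eM(L)$ by the two equivalent descriptions of the partial order.
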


\begin{proof}
Consider the sequence $(f_k)_{k\in\N}$ in $\SM(L)$ given by
\begin{align*}
&a_1\coloneqq f(1,\inff),\,\,\,f_1\coloneqq\rchi_{a_1}\\
k\geq 2:\,&a_k\coloneqq(f-f_{k-1})(\tfrac{1}{k},\inff),\,\,\,f_k\coloneqq\tsum_{i=1}^{k}\tfrac{1}{i}\cdot\rchi_{a_i},
\end{align*}
(already used in Lemma \ref{DecompositionNonnegativeFunction_aux}).
Each $a_k$ is a finite join of elements of the form $a^{c}\wedge b$ for some $a,b\in\Cd L$. The sequence $(f_k)_{k\in\N}$ is increasing. And because $f$ is nonnegative, we have $f_k\leq f$ for each $k\in\N$.
Moreover, for every $p\in\Q$, $\tbigvee_{k\in\N}f_k(p,\inff)=\tbigvee_{k\in\N}f(p_k,\inff)$ for some $p<p_k\in\Q$ ($k\in\N$). Hence, since countable joins of $\{f(r,\inff)\mid r\in\Q\}$ are complemented in $L$, $\tbigvee_{k\in\N}f_k(p,\inff)$ is complemented in $L$. As a consequence, $\sup_{k\in\N}f_k$ exists in $\eM(L)$ and
\[
\lim_{k\rightarrow+\infty} f_k=\sup_{k\in\N}f_k.
\]
We want to show that
\[f=\lim_{k\rightarrow+\infty} f_k.\]

Because $f_k\leq f$ for each $k\in\N$, it is clear that
\[\lim_{k\rightarrow+\infty} f_k=\sup_{k\in\N}f_k\leq f.\]

Conversely, if $p<0$, as $f_k(p,\inff)=1$ for each $k\in\N$ and $f\geq\pmb{0}$ implies that $f(p,\inff)=1$, then
\[
(\lim_{k\rightarrow+\infty} f_k)(p,\inff)=\tbigvee_{k\in\N}f_k(p,\inff)=1=f(p,\inff).
\]
If $p\geq 0$, since $(\tsum_{i=1}^{n}\tfrac{1}{i})_{n\in\N}$ is not a convergent sequence, there is $u\in\N$ such that $p<\tsum_{i=1}^{u}\tfrac{1}{i}$. Moreover, for each $s>p$, there is some $m\in\N$ such that $s-p>\tfrac{1}{m}$ by the Archimedean property. Set
\[u_s\coloneqq\max\{u,m\}.\]
We have
\[\tfrac{1}{u_s}\leq\tfrac{1}{m}<s-p\,\,\,\textrm{ and }p<\tsum_{i=1}^{u}\tfrac{1}{i}\leq\tsum_{i=1}^{u_s}\tfrac{1}{i}.\]

The function $f_{u_s}$ is a piecewise constant function. Let us denote by $0\eqqcolon t_0<t_1<\cdots<t_n\coloneqq\tsum_{i=1}^{u_s}\tfrac{1}{i}$ the discontinuity points of $f_{u_s}$. Note that $p\geq t_0$ and $p<t_n$. Now, let \[i-1\coloneqq\max\{j\in\{0,\ldots ,n-1\}\mid t_j\leq p\}.\]
The interval $[t_{i-1},t_i)$ is such that $p<t_i< s$. Indeed, if $[p,s)\subseteq[t_{i-1},t_i)$, we have $t_{i-1}\leq p$, $s\leq t_i$ and
\[\tfrac{1}{m}<s-p\leq t_i-t_{i-1}\leq\tfrac{1}{u_s}\leq\tfrac{1}{m},\]
which is a contradiction.
Thus, $p<t_i< s$ and from the formula of $f_{u_s}(p,\inff)$ described in Lemma \ref{DecompositionNonnegativeFunction_aux}, we conclude that
\[f(s,\inff)\leq f(t_i,\inff)= f_{u_s}(p,\inff).\]
As a result, and because $\{u_s\mid s>p, s\in\Q\}\subseteq\N$, we obtain
\[f(p,\inff)=\tbigvee_{s>p}f(s,\inff)\leq\tbigvee_{s>p}f_{u_s}(p,\inff)\leq\tbigvee_{k\in\N}f_k(p,\inff)=(\lim_{k\rightarrow+\infty} f_k)(p,\inff).\qedhere\]
\end{proof}

Since any nonnegative $f\in\eF(L)=\eM(\Cong(L))$ that is measurable on $L$ satisfies the conditions required in the previous theorem, we obtain the following corollary for any nonnegative measurable function.

\begin{Corollary}\label{DecompositionNonnegativeMeasFunction}
Let $f\colon \ereals\to\Cong(L)$ be a nonnegative real-valued function on $L$. If $f$ is measurable on $L$, then we can write
\[f=\lim_{k\rightarrow+\infty}\tsum_{i=1}^{k}\tfrac{1}{i}\cdot\rchi_{\theta_{S_i}}\]
for some complemented $\theta_{S_i}\in\Cong(L)$.
\end{Corollary}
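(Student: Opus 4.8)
The plan is to obtain this Corollary as a direct instance of the preceding theorem, applied with the $\sigma$-frame taken to be the congruence frame $\Cong(L)$ (which in fact is a frame). Since by hypothesis $f\in\eF(L)=\eM(\Cong(L))$ is nonnegative, $f$ is already a nonnegative measurable extended real function on $\Cong(L)$, so the only thing to verify is that the two structural hypotheses of the theorem hold: that $f(r,\inff)$ is complemented in $\Cong(L)$ for every $r\in\Q$, and that every countable join of elements of $\{f(r,\inff)\mid r\in\Q\}$ is again complemented in $\Cong(L)$. Both will follow purely from the assumption that $f$ is measurable on $L$.

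For the first hypothesis, I would recall that measurability of $f$ on $L$ means exactly $f(r,\inff)\in\nabla[L]$ for all $r\in\Q$. Writing $f(r,\inff)=\nabla_{a_r}$ with $a_r\in L$, the fact recorded in Section 2 that $\nabla_{a_r}$ and $\Delta_{a_r}$ are complementary in $\Cong(L)$ immediately shows that each $f(r,\inff)$ is complemented. For the second hypothesis, I would use that $\nabla\colon L\to\Cong(L)$ is a $\sigma$-frame homomorphism, hence preserves countable joins: for any countable $(a_n)_{n\in\N}$ in $L$ one has $\tbigvee_{n\in\N}\nabla_{a_n}=\nabla_{\tbigvee_{n\in\N}a_n}$ computed in $\Cong(L)$. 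Consequently any countable join drawn from $\{f(r,\inff)\mid r\in\Q\}\subseteq\nabla[L]$ stays inside $\nabla[L]$, and is therefore complemented by the same complementarity of $\nabla$ and $\Delta$. I expect this to be the only delicate point: one must make sure the join is computed in $\Cong(L)$ itself (so that the identity $\tbigvee\nabla_{a_n}=\nabla_{\tbigvee a_n}$ is legitimately applied), and that it is this ambient complementedness in $\Cong(L)$—not complementedness within $L$—that the theorem requires.

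Having checked both hypotheses, the preceding theorem delivers a decomposition $f=\lim_{k\rightarrow+\infty}\tsum_{i=1}^{k}\tfrac{1}{i}\cdot\rchi_{a_i}$ with each $a_i$ complemented in $\Cong(L)$. To conclude, I would simply rename these scalars in the language of $\sigma$-sublocales: a complemented element of the congruence frame is precisely a congruence $\theta_{S_i}$ associated with a complemented $\sigma$-sublocale $S_i$, and substituting this notation yields the stated form $f=\lim_{k\rightarrow+\infty}\tsum_{i=1}^{k}\tfrac{1}{i}\cdot\rchi_{\theta_{S_i}}$.
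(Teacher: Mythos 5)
Your proposal is correct and follows exactly the paper's (very terse) argument: the paper likewise obtains the corollary by noting that a nonnegative $f$ measurable on $L$ satisfies the hypotheses of the preceding theorem applied to $\Cong(L)$, since $f(r,\inff)\in\nabla[L]$, each $\nabla_a$ is complemented by $\Delta_a$, and $\nabla$ preserves countable joins so such joins stay in $\nabla[L]$. You have simply spelled out the verification that the paper leaves implicit.
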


In other words, any nonnegative measurable function $f$ on $L$ can be written as a limit of an increasing sequence $(f_k)_{k\in\N}$ of nonnegative simple functions in $\SM(\Cong(L))$.
However, we point out that in general $f_k\coloneqq\tsum_{i=1}^{k}\tfrac{1}{i}\cdot\rchi_{\theta_{S_i}}$ is not measurable on $L$ because $f_k\in\SM(\Cong(L))\cap\SM(L)$ if and only if the congruences $\theta_{S_1},\ldots,\theta_{S_k}$ are clopen.

\section{Integral of a nonnegative simple function}\label{Integral_Simple_Nonnegative_Section}

We will now address the problem of defining the integral of localic functions. We will do it for localic simple functions. We start with the case of nonnegative simple functions.
\medskip

Recall from \cite{Simpson2012} that  a map $\mu\colon L\to[0,\infty]$ on a join-$\sigma$-complete lattice $L$ is  a {\em  measure} on $L$ if
\begin{enumerate}
\item[(M1)] $\mu(0_{L})=0$;
\item[(M2)] $\forall x,y\in L$,\  $x\leq y\Rightarrow\mu(x)\leq\mu(y)$;
\item[(M3)] $\forall x,y\in L$,\  $\mu(x)+\mu(y)=\mu(x\vee y)+\mu(x\wedge y)$;
\item[(M4)] $\forall (x_i)_{i\in\N}\subseteq L$,\ $\forall i\in\N, \ x_i\leq x_{i+1}\Rightarrow\displaystyle\mu(\tbigvee_{i\in\N}x_i)=\sup_{i\in\N}\mu(x_i)$.
\end{enumerate}
In the literature, these functions are  usually referred to as {\em  $\sigma$-continuous valuations} (\cite{KeimelOthers2003,Simpson2012}).

From now on, let $L$ be a $\sigma$-frame and let $\mu$ be a measure on $\SL(L)$. Recall that the lattice $\Cong(L)$ is dual to $\SL(L)$. For each $S\in \SL(L)$ we will denote the corresponding congruence by $\theta_S$ (hence $S=L/\theta_S$).

\begin{Definition}[\bf{Integral of a nonnegative simple function}]\label{Integral_Simple_nonnegative}
If $g\in\SM(\Cong(L))$ is a nonnegative function with canonical representation
\[g=\tsum_{i=1}^{n}r_i\cdot\rchi_{\theta_{S_i}^{c}},\]
the {\em integral of $g$ with respect to $\mu$} (briefly, {\em $\mu$-integral}) is the value
\[\int_L g\,d\mu\equiv\int g\,d\mu\coloneqq\tsum_{i=1}^{n}r_i\mu(S_i).\]
Moreover, for each $S\in\SL(L)$, the {\em integral of $g$ over $S$ with respect to $\mu$} is given by
\[\int_{S}g\,d\mu\coloneqq\tsum_{i=1}^{n}r_i\mu(S_i\wedge S).\]
\end{Definition}

By convention, whenever $r_i=0$ and $\mu(S_i)=\infty$ (or $\mu(S_i\wedge S)=\infty$) we set $0\,\cdot\,\infty\coloneqq0$.
Because $g$ is simple and nonnegative, we have $0\leq r_1<\cdots<r_n$ (by Corollary \ref{SFunction_canonical}), and since $\mu(S_i\wedge S)\in[0,\infty]$ for every $i=1,\ldots ,n$, not only is the $\mu$-integral well-defined but we also get that
\[0\leq\int_{S}g\,d\mu\leq\infty.\]

We will omit ``with respect to $\mu$'' or drop the prefix $\mu$ if there is no ambiguity.
When a simple function $g$ is not expressed in its canonical form, it will be helpful to evaluate the integral of $g$ through a weaker representation.

\begin{Proposition}\label{NonSimpleProp01}
Let $g=\tsum_{i=1}^{n}r_i\cdot\rchi_{\theta_{S_i}^{c}}$ be a representation of a nonnegative simple function with pairwise disjoint $\theta_{S_1}^{c}, \theta_{S_2}^{c},\ldots ,\theta_{S_n}^{c}$ in $\Cd\Cong(L)$. Then $\mu(S_i\wedge S)=0$ whenever $r_i<0$, and for any $S\in\SL(L)$,
\[\int_{S}g\,d\mu=\tsum_{i=1}^{n}r_i\mu(S_i\wedge S).\]
\end{Proposition}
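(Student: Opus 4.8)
The plan is to reduce the given (non-canonical) representation to the canonical form, where the integral is by definition the weighted sum of measures, and then verify that the passage to canonical form does not change the value $\tsum_{i=1}^{n}r_i\mu(S_i\wedge S)$. The hypothesis gives pairwise disjoint complemented congruences $\theta_{S_1}^{c},\ldots,\theta_{S_n}^{c}$, which is exactly condition (2) of Proposition \ref{FSimple_equiv}; so I am one step short of the canonical form, missing only the reindexing that orders the scalars $r_i$ and discards the pieces where $\theta_{S_i}^{c}=0$ (equivalently $S_i=1_{\SL(L)}$). First I would address the claim that $\mu(S_i\wedge S)=0$ whenever $r_i<0$: since $g$ is nonnegative and its values are controlled by the partition $(\theta_{S_i}^{c})_i$, a negative coefficient $r_i$ forces the corresponding piece $\theta_{S_i}^{c}$ to vanish in $\Cong(L)$, i.e. $\theta_{S_i}^{c}=0_{\Cong(L)}$, so that $S_i=1_{\SL(L)}$ and hence $S_i\wedge S=S$; but nonnegativity on that piece forces it to contribute nothing, which I would argue yields $\mu(S_i\wedge S)=0$ for that index.

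The cleanest way to see that negativity kills the piece is to examine the canonical form directly. By Proposition \ref{FSimple_equiv}, from the pairwise-disjoint representation I can first drop any index with $\theta_{S_i}^{c}=0$ (such a term is $r_i\cdot\rchi_{0}=r_i\cdot\pmb{0}=\pmb{0}$ by Lemma \ref{Characteristic_prop}(1) and contributes $r_i\mu(S_i\wedge S)=r_i\mu(S)$, so I must check these contribute zero), then complete to a partition of unity by adjoining the complement of the join (adding a $0$-coefficient term, which contributes nothing), and finally merge equal scalars and sort them increasingly. Once in canonical form $g=\tsum_j s_j\cdot\rchi_{\theta_{T_j}^{c}}$ with $0\le s_1<\cdots<s_m$ (nonnegativity gives $s_1\ge 0$ by Corollary \ref{SFunction_canonical}), Definition \ref{Integral_Simple_nonnegative} computes $\int_S g\,d\mu=\tsum_j s_j\mu(T_j\wedge S)$. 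The crux is then a bookkeeping identity showing this equals $\tsum_i r_i\mu(S_i\wedge S)$.

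The key algebraic tool for the bookkeeping is the additivity/modularity of $\mu$ in conditions (M1)--(M3), which gives finite additivity on disjoint complemented elements: when I merge two pieces with a common scalar, $\theta_{S_i}^{c}$ and $\theta_{S_j}^{c}$ disjoint means $S_i\vee S_j$ in $\SL(L)$ satisfies $(S_i\vee S_j)\wedge S=(S_i\wedge S)\vee(S_j\wedge S)$ with $(S_i\wedge S)\wedge(S_j\wedge S)=0_{\SL(L)}$, so by (M1) and (M3), $\mu((S_i\wedge S)\vee(S_j\wedge S))=\mu(S_i\wedge S)+\mu(S_j\wedge S)$, matching the merge $s\cdot\rchi_{\theta_{S_i}^{c}}+s\cdot\rchi_{\theta_{S_j}^{c}}=s\cdot\rchi_{\theta_{S_i}^{c}\vee\theta_{S_j}^{c}}$ from Lemma \ref{Characteristic_prop}(3). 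The main obstacle is precisely the negative-coefficient bookkeeping: I must confirm that the combination of nonnegativity with the disjoint representation forces $\mu(S_i\wedge S)=0$ on every piece carrying a negative (or otherwise ``discarded'') scalar, so that those terms silently drop out of both sides of the asserted equality. I expect this to follow by comparing $g(\inff,q)$ for small negative $q$ against $\pmb{0}(\inff,q)=0$ using the value formulas of Proposition \ref{Other_formula}, forcing the relevant $\theta_{S_i}^{c}$ to be $0$ and hence, via $S_i=1_{\SL(L)}$ together with the argument that these pieces cannot carry mass, $\mu(S_i\wedge S)=0$.
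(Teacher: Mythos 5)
Your overall plan is sound and is essentially the intended one (the paper states this proposition without proof, but the same mechanism appears later in its proof of Proposition \ref{SimpleProp03}): reduce the given pairwise-disjoint representation to the canonical form of Proposition \ref{FSimple_equiv}, match the merging of equal scalars against finite additivity of $\mu$ coming from (M1) and (M3), and use nonnegativity through the value formulas of Proposition \ref{Other_formula} to show that negative coefficients force their pieces to vanish. However, there is a genuine error at exactly the step you flag as the crux: you translate $\theta_{S_i}^{c}=0_{\Cong(L)}$ as ``$S_i=1_{\SL(L)}$, hence $S_i\wedge S=S$''. The duality goes the other way. Since the correspondence $S\mapsto\theta_S$ is order-reversing, with $1_{\SL(L)}=L/0_{\Cong(L)}$ and $0_{\SL(L)}=L/1_{\Cong(L)}$, the identity $\theta_{S_i}^{c}=0_{\Cong(L)}$ is equivalent to $\theta_{S_i}=1_{\Cong(L)}$, i.e.\ to $S_i=0_{\SL(L)}$. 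With the correct reading, both of the points you leave open become immediate: any term with $\theta_{S_i}^{c}=0_{\Cong(L)}$ (in particular, by your own Proposition \ref{Other_formula} argument, every term with $r_i<0$) satisfies $S_i\wedge S=0_{\SL(L)}$, so $\mu(S_i\wedge S)=0$ by (M1), and such terms can be dropped without changing either side of the asserted identity.

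Under your reading, by contrast, the proof cannot be completed: if $S_i$ were $1_{\SL(L)}$, then $\mu(S_i\wedge S)=\mu(S)$, which is in general nonzero, and the phrase ``nonnegativity on that piece forces it to contribute nothing'' is not an argument --- nonnegativity has already been fully spent in forcing $\theta_{S_i}^{c}=0_{\Cong(L)}$, and nothing remains to force $\mu(S)=0$. Indeed, as you read it, the first assertion of the proposition would be false (take $S=1_{\SL(L)}$ with $\mu(L)>0$), which is itself a signal that the dictionary is inverted. So the gap is not in the strategy but in the translation between $\Cong(L)$ and $\SL(L)$; fix that single translation and your argument goes through, in agreement with the paper's treatment of the localized statement in Proposition \ref{SimpleProp03}, where $r_i<0$ forces $S_i\le S^{c}$ and hence $\mu(S_i\wedge S)=0$ (your case is $S=1_{\SL(L)}$, giving $S_i=0_{\SL(L)}$).
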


\begin{Remark}
In other words, if $g=\tsum_{i=1}^{n}r_i\cdot\rchi_{\theta_{S_i}}$, where $\theta_{S_1},\ldots ,\theta_{S_n}$ are pairwise disjoint in $\Cd\Cong(L)$, then
\[\int_{S}g\,d\mu=\tsum_{i=1}^{n}r_i\mu(S_i^{c}\wedge S).\]
\end{Remark}

It is worth noting that the integral is defined such that
\[\int\rchi_{\theta_S}d\mu=\mu(S^{c})\]
because, while it seems more natural to think of $\rchi_{\theta_S}$ as the characteristic function in $\F(L)$ associated with a complemented $S\in\SL(L)$, if this were the case, taking $S_1\leq S_2$ in $\Cd\Cong(L)$, one would expect the characteristic function associated with $S_1$ to be less or equal than the one associated with $S_2$, but $\rchi_{\theta_{S_2}}\leq\rchi_{\theta_{S_1}}$. Thus, we set
\[\rchi_S\coloneqq\rchi_{\theta_S^{c}},\]
and through this identification the integral of a nonnegative simple function with canonical representation $\tsum_{i=1}^{n}r_i\cdot\rchi_{\theta_{S_i}^{c}}$ becomes
\[\int\tsum_{i=1}^{n}r_i\cdot\rchi_{S_i}\,d\mu=\tsum_{i=1}^{n}r_i\mu(S_i).\]

\begin{Example}
Recalling that $0_{\Cong(L)}\in\Cong(L)$ represents $L=1_{\SL(L)}\in\SL(L)$ and $1_{\Cong(L)}\in\Cong(L)$ represents $\{1_L\}=0_{\SL(L)}\in\SL(L)$, the integrals of $\pmb{0}$ and $\pmb{1}$ are given, respectively, by
\begin{gather*}
\int\pmb{0}\,d\mu=\int\rchi_{0_{\Cong(L)}}\,d\mu=\mu(1_{\SL(L)}^{c})=\mu(\{1_L\})=0\,\,\textrm{ and }\\
\int\pmb{1}\,d\mu=\int\rchi_{1_{\Cong(L)}}\,d\mu=\mu(0_{\SL(L)}^{c})=\mu(L).
\end{gather*}
\end{Example}

As expected, the integral of a nonnegative simple function over a $\sigma$-sublocale $S$ is monotone with respect to $S$:

\begin{Proposition}\label{NonSimpleProp02}
If $g\in\SM(\Cong(L))$ is nonnegative and $S,T\in\SL(L)$ are such that $S\leq T$, then
\[\int_{S}g\,d\mu\leq\int_{T}g\,d\mu.\]
\end{Proposition}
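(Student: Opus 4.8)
The plan is to reduce the inequality to the monotonicity axiom of $\mu$, applied termwise to the canonical representation of $g$.

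First I would fix the canonical form $g=\tsum_{i=1}^{n}r_i\cdot\rchi_{\theta_{S_i}^{c}}$, so that by Definition \ref{Integral_Simple_nonnegative},
\[\int_{S}g\,d\mu=\tsum_{i=1}^{n}r_i\mu(S_i\wedge S)\qquad\textrm{and}\qquad\int_{T}g\,d\mu=\tsum_{i=1}^{n}r_i\mu(S_i\wedge T).\]
Since $g$ is nonnegative, Corollary \ref{SFunction_canonical} gives $0\leq r_1<\cdots<r_n$, so every coefficient satisfies $r_i\geq 0$.

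Next I would exploit the coframe structure of $\SL(L)$: because $S\leq T$ and binary meets are monotone, $S_i\wedge S\leq S_i\wedge T$ for each $i\in\{1,\ldots,n\}$. The monotonicity axiom (M2) of $\mu$ then yields $\mu(S_i\wedge S)\leq\mu(S_i\wedge T)$ in $[0,\infty]$, and multiplying by the nonnegative scalar $r_i$ preserves this inequality. Summing over $i$ gives the claim.

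The only delicate point is the extended-real arithmetic, since some $\mu(S_i\wedge S)$ may equal $\infty$. But each $r_i\geq 0$, and the fixed convention $0\cdot\infty=0$ makes the terms with $r_i=0$ vanish identically on both sides, so the termwise inequalities hold and summing finitely many of them in $[0,\infty]$ is unproblematic. I do not anticipate any genuine obstacle here; the result is simply the monotonicity of $\mu$ transported through a nonnegative linear combination.
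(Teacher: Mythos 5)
Your proof is correct and follows essentially the same route as the paper: fix a representation of $g$, express both integrals via the formula $\tsum_{i=1}^{n}r_i\mu(S_i\wedge\,\cdot\,)$, and apply the monotonicity axiom (M2) termwise, using that the coefficients are nonnegative. The only cosmetic difference is that the paper works with an arbitrary pairwise-disjoint representation (leaning on Proposition \ref{NonSimpleProp01}), whereas you use the canonical form, which makes the nonnegativity of the $r_i$ automatic via Corollary \ref{SFunction_canonical} --- if anything, a slightly tidier choice.
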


\begin{proof}
Write $g=\tsum_{i=1}^{n}r_i\cdot\rchi_{\theta_{S_i}}$ where $\theta_{S_1}, \theta_{S_2},\ldots ,\theta_{S_n}$ are pairwise disjoint in $\Cd\Cong(L)$. Then
\[\int_{S}g\,d\mu=\tsum_{i=1}^{n}r_i\mu(S_i^{c}\wedge S)\leq\tsum_{i=1}^{n}r_i\mu(S_i^{c}\wedge T)=\int_{T}g\,d\mu.\qedhere\]
\end{proof}

Furthermore, whenever $S\in\SL(L)$ is complemented, the integral of $g$ over $S$ is equal to the integral of $g\cdot\rchi_S$.

\begin{Proposition}\label{NonSimpleProp03}
Given a nonnegative $g\in\SM(\Cong(L))$ and a complemented $S\in\SL(L)$,
\[\int_{S}g\,d\mu=\int g\cdot\rchi_{\theta_{S}^{c}}\,d\mu.\]
\end{Proposition}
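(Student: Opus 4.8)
The plan is to reduce the identity to a single computation with the product of simple functions. First I would fix a representation $g=\tsum_{i=1}^{n}r_i\cdot\rchi_{\theta_{S_i}^{c}}$ in which the congruences $\theta_{S_1}^{c},\ldots,\theta_{S_n}^{c}$ are pairwise disjoint elements of $\Cd\Cong(L)$ (the canonical form will do), so that Proposition \ref{NonSimpleProp01} applies and $\int_{S}g\,d\mu=\tsum_{i=1}^{n}r_i\mu(S_i\wedge S)$. The goal is then to show that $g\cdot\rchi_{\theta_S^{c}}$ admits the representation $\tsum_{i=1}^{n}r_i\cdot\rchi_{\theta_{S_i\wedge S}^{c}}$ and that its integral over all of $L$ equals $\tsum_{i=1}^{n}r_i\mu(S_i\wedge S)$, which is exactly $\int_{S}g\,d\mu$.

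Next I would compute the product. Since $\SM(\Cong(L))$ is a ring (Proposition \ref{Prop_SimpleFunctions}) and both factors are nonnegative, distributivity together with Lemma \ref{Characteristic_prop}(2) gives
\[
g\cdot\rchi_{\theta_S^{c}}=\Big(\tsum_{i=1}^{n}r_i\cdot\rchi_{\theta_{S_i}^{c}}\Big)\cdot\rchi_{\theta_S^{c}}=\tsum_{i=1}^{n}r_i\cdot\big(\rchi_{\theta_{S_i}^{c}}\cdot\rchi_{\theta_S^{c}}\big)=\tsum_{i=1}^{n}r_i\cdot\rchi_{\theta_{S_i}^{c}\wedge\theta_S^{c}}.
\]

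The key step, and the one that needs the most care, is the identification of $\theta_{S_i}^{c}\wedge\theta_S^{c}$ with the complement of the congruence of $S_i\wedge S$. Here I would invoke the anti-isomorphism between $\Cong(L)$ and $\SL(L)$: meets of congruences correspond to joins of $\sigma$-sublocales and complements are preserved, whence $\theta_{S_i}\vee\theta_S=\theta_{S_i\wedge S}$. Since all the congruences involved are complemented, the De Morgan law in the frame $\Cong(L)$ yields
\[
\theta_{S_i}^{c}\wedge\theta_S^{c}=(\theta_{S_i}\vee\theta_S)^{c}=\theta_{S_i\wedge S}^{c},
\]
so that $g\cdot\rchi_{\theta_S^{c}}=\tsum_{i=1}^{n}r_i\cdot\rchi_{\theta_{S_i\wedge S}^{c}}$.

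Finally I would verify that this representation meets the hypotheses of Proposition \ref{NonSimpleProp01}. Each $\theta_{S_i\wedge S}^{c}=\theta_{S_i}^{c}\wedge\theta_S^{c}$ is a meet of complemented congruences, hence complemented; and since $\theta_{S_i\wedge S}^{c}\le\theta_{S_i}^{c}$ with the $\theta_{S_i}^{c}$ pairwise disjoint, the new congruences are pairwise disjoint as well. As $g\cdot\rchi_{\theta_S^{c}}\ge\pmb{0}$, applying Proposition \ref{NonSimpleProp01} with the whole locale $1_{\SL(L)}$ gives
\[
\int g\cdot\rchi_{\theta_S^{c}}\,d\mu=\tsum_{i=1}^{n}r_i\mu\big((S_i\wedge S)\wedge 1_{\SL(L)}\big)=\tsum_{i=1}^{n}r_i\mu(S_i\wedge S)=\int_{S}g\,d\mu,
\]
which is the claimed equality. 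The only genuinely delicate point is the duality computation for $\theta_{S_i}^{c}\wedge\theta_S^{c}$; the remainder is bookkeeping with the ring structure and with the definition of the integral.
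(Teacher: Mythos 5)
Your proposal is correct and follows essentially the same route as the paper's proof: expand the product as $g\cdot\rchi_{\theta_S^{c}}=\tsum_{i=1}^{n}r_i\cdot\rchi_{\theta_{S_i}^{c}\wedge\theta_S^{c}}$, identify $\theta_{S_i}^{c}\wedge\theta_S^{c}$ via the anti-isomorphism between $\Cong(L)$ and $\SL(L)$ (the paper writes this congruence as $\theta_{S_i^{c}\vee S^{c}}$, you write it as $\theta_{S_i\wedge S}^{c}$, which is the same element), and then apply Proposition \ref{NonSimpleProp01}. Your extra verifications (complementedness and pairwise disjointness of the meets) are details the paper leaves implicit but are handled identically in spirit.
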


\begin{proof}
Write $g=\tsum_{i=1}^{n}r_i\cdot\rchi_{\theta_{S_i}^{c}}$ with $\theta_{S_1}^{c},\ldots ,\theta_{S_n}^{c}\in\Cd\Cong(L)$ pairwise disjoint. Since
$g\cdot\rchi_{\theta_S^{c}}=\tsum_{i=1}^{n}r_i\cdot\rchi_{\theta_{S_i}^{c}\wedge\theta_S^{c}}$, where $\theta_{S_1}^{c}\wedge\theta_S^{c},\ldots ,\theta_{S_n}^{c}\wedge\theta_S^{c}$ are also pairwise disjoint, and $\theta_{S_i}^{c}\wedge\theta_S^{c}=\theta_{{S_i}^{c}\vee S^{c}}$ for $i=1,\ldots,n$, we have
\[
\int g\cdot\rchi_{\theta_S^{c}}\,d\mu=\tsum_{i=1}^{n}r_i\mu((S_i^{c}\vee S^{c})^{c})=\tsum_{i=1}^{n}r_i\mu(S_i\wedge S)=\int_{S}g\,d\mu.\qedhere
\]
\end{proof}

\section{Integral of a general simple function}\label{Integral_Simple_Section}

After studying the nonnegative case, let us consider a general simple function $g\in\F(L)$. Recall that $g=g^{+}-g^{-}$, where $g^{+}$ and $g^{-}$ are both nonnegative simple functions. Thus, the idea is to define the integral of $g$ through the integrals of $g^{+}$ and $g^{-}$.

\begin{Definition} [\bf{Integral of a simple function}] \label{Integral_Simple}
Let $g\in\SM(\Cong(L))$ and $S\in\SL(L)$. If
\[\int_{S}g^{+}\,d\mu<\infty\qquad\textrm{ or }\qquad\int_{S}g^{-}\,d\mu<\infty,\]
we say that $g$ is {\em $\mu$-integrable over $S$}, and the {\em $\mu$-integral of $g$ over $S$} is
\[\int_{S}g\,d\mu\coloneqq\int_{S}g^{+}\,d\mu-\int_{S}g^{-}\,d\mu.\]
We say that $g$ is {\em $\mu$-integrable} if $g$ is $\mu$-integrable over $L=1_{\SL(L)}$, and in that case we talk about the {\em $\mu$-integral} of $g$.
\end{Definition}

Note that a nonnegative simple function is always $\mu$-integrable over any $S\in\SL(L)$. When there is no ambiguity, we will drop the prefix $\mu$.
Moreover, the previous definition generalises Definition \ref{Integral_Simple_nonnegative} in the sense that whenever $g$ is nonnegative
\[
\int_S^{(\textrm{Def. }\ref{Integral_Simple})} g\, d\mu=\int_S^{(\textrm{Def. }\ref{Integral_Simple_nonnegative})} g\, d\mu
\]
because $g=g^{+}$ and $g^{-}=\pmb{0}$.

\begin{Definition}\label{Summable}
A $g\in\SM(\Cong(L))$ is {\em summable over $S\in\SL(L)$} if
\[
\int_{S} g^{+}\,d\mu<\infty\qquad\textrm{ and }\qquad\int_{S} g^{-}\,d\mu<\infty.
\]
We say that $g$ is {\em summable} if $g$ is summable over $L=1_{\SL(L)}$.
\end{Definition}

By definition, it is clear that every summable function is integrable and its integral is finite.

\begin{Remark}
Here, our use of the term ``integrable'' admits that the integral of $g$ can be equal to $+\infty$ or $-\infty$. However, the standard approach in measure theory is that a function is integrable whenever it is summable.
\end{Remark}

The following formulas will be often helpful to determine the integral of a integrable simple function.

\begin{Proposition}\label{SimpleProp01}
If $g\in\SM(\Cong(L))$ is integrable over $S\in\Cong(L)$ and $g=\tsum_{i=1}^{n}r_i\cdot\rchi_{\theta_{S_i}^{c}}$ is a representation of $g$ with $\theta_{S_1}^{c},\ldots ,\theta_{S_n}^{c}$ pairwise disjoint in $\Cd\Cong(L)$, then
\[\int_{S} g\,d\mu=\tsum_{i=1}^{n}r_i\mu(S_i\wedge S).\]
\end{Proposition}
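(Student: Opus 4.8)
The plan is to reduce the signed case to the nonnegative case already settled in Proposition \ref{NonSimpleProp01}, by splitting the given disjoint representation of $g$ according to the sign of its coefficients.

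First I would partition the index set $\{1,\dots,n\}$ into $P\coloneqq\{i\mid r_i\geq 0\}$ and $N\coloneqq\{i\mid r_i<0\}$. The key claim is that, because the congruences $\theta_{S_1}^{c},\dots,\theta_{S_n}^{c}$ are pairwise disjoint, the positive and negative parts of $g$ are obtained simply by collecting the terms of each sign:
\[
g^{+}=\tsum_{i\in P}r_i\cdot\rchi_{\theta_{S_i}^{c}}\qquad\text{and}\qquad g^{-}=\tsum_{i\in N}(-r_i)\cdot\rchi_{\theta_{S_i}^{c}}.
\]
To justify this I would appeal to the positive/negative decomposition established in the proof of Proposition \ref{Prop_SimpleFunctions}: after adjoining the complement $(\tbigvee_i\theta_{S_i}^{c})^{c}$ with coefficient $0$ so as to obtain a partition of $1_{\Cong(L)}$, and reordering the coefficients increasingly (which changes neither $g$, nor its positive and negative parts, nor the proposed right-hand sums, since the adjoined term carries coefficient $0$), the stated formulas for $g^{+}$ and $g^{-}$ are exactly those of that proof. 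Both $g^{+}$ and $g^{-}$ are then nonnegative simple functions presented in disjoint representations, so Proposition \ref{NonSimpleProp01} applies verbatim and yields
\[
\int_{S}g^{+}\,d\mu=\tsum_{i\in P}r_i\,\mu(S_i\wedge S)\qquad\text{and}\qquad\int_{S}g^{-}\,d\mu=\tsum_{i\in N}(-r_i)\,\mu(S_i\wedge S).
\]

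Finally I would invoke Definition \ref{Integral_Simple}. Since $g$ is integrable over $S$, at least one of the two integrals above is finite, so the difference $\int_{S}g^{+}\,d\mu-\int_{S}g^{-}\,d\mu$ is a well-defined element of $\eR$; subtracting the two displayed sums and recombining over $P\cup N=\{1,\dots,n\}$ gives $\tsum_{i=1}^{n}r_i\,\mu(S_i\wedge S)$, as required.

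The main obstacle I anticipate is the bookkeeping in the first step. Proposition \ref{Prop_SimpleFunctions} records the positive/negative decomposition only for the canonical form (strictly increasing coefficients with $\tbigvee_i a_i=1$), whereas here the representation is merely pairwise disjoint and the $r_i$ may repeat or be unordered. The care needed is to verify that passing to the canonical form leaves both sides of the identity unchanged, so that the sign-based splitting of the sum is legitimate; this is routine but must be stated explicitly. A secondary point, easily handled, is ensuring that no $\infty-\infty$ arises when recombining the two sums, which is precisely guaranteed by the integrability hypothesis.
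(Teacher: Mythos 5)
Your proposal is correct and takes essentially the same route as the paper's own proof: both split $g$ into $g^{+}$ and $g^{-}$ by the sign of the coefficients (the paper simply sorts $r_1\leq\cdots\leq r_n$ and cuts at $k=\max\{i\mid r_i<0\}$ instead of using index sets $P$ and $N$), apply Proposition \ref{NonSimpleProp01} to each nonnegative part, and subtract via Definition \ref{Integral_Simple}. Your explicit bookkeeping verifying the $g^{\pm}$ decomposition for a merely pairwise disjoint (non-canonical) representation is a point the paper leaves as an unstated ``with no loss of generality''.
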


\begin{proof}
If $r_i\geq0$ for all $i\in\{1,\ldots ,n\}$, then $g$ is nonnegative and the result follows from Proposition \ref{NonSimpleProp01}.
On the other hand, if there is some $i\in\{1,\ldots ,n\}$ such that $r_i<0$, let  $r_1\leq\cdots \leq r_n$ (with no loss of generality) and set $k\coloneqq\max\{i\mid r_i<0\}$. Then
$g^{-}=\tsum_{i=1}^{k}(-r_i)\cdot\rchi_{\theta_{S_i}^{c}}$
and $g^{+}=\tsum_{i=k+1}^{n}r_i\cdot\rchi_{\theta_{S_i}^{c}}$.
Hence, once again by Proposition \ref{NonSimpleProp01},
\[
\int_{S}g^{+}\,d\mu-\int_{S}g^{-}\,d\mu=\tsum_{i=k+1}^{n}r_i\mu(S_i\wedge S)-\tsum_{i=1}^{k}(-r_i)\mu(S_i\wedge S)=\tsum_{i=1}^{n}r_i\mu(S_i\wedge S).\]
\end{proof}

\begin{Proposition}\label{SimpleProp02}
Let $g\in\SM(\Cong(L))$ and let $S\in\SL(L)$ be complemented. If $g$ is integrable over $S$, then $g\cdot\rchi_{\theta_S^{c}}$ is integrable and
\[\int_{S}g\,d\mu=\int g\cdot\rchi_{\theta_S^{c}}\,d\mu.\]
\end{Proposition}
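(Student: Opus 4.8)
The plan is to reduce the general case to the nonnegative case already settled in Proposition \ref{NonSimpleProp03}, exploiting the decomposition $g=g^{+}-g^{-}$ into nonnegative simple functions. First note that $g\cdot\rchi_{\theta_S^{c}}$ is again simple by Proposition \ref{Prop_SimpleFunctions}, so it has well-defined positive and negative parts and the notion of integrability of Definition \ref{Integral_Simple} applies to it. The crucial observation is that multiplication by the characteristic function $\rchi_{\theta_S^{c}}$ commutes with the formation of positive and negative parts, that is,
\[(g\cdot\rchi_{\theta_S^{c}})^{+}=g^{+}\cdot\rchi_{\theta_S^{c}}\qquad\textrm{and}\qquad(g\cdot\rchi_{\theta_S^{c}})^{-}=g^{-}\cdot\rchi_{\theta_S^{c}}.\]

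To establish this, I would write $g=\tsum_{i=1}^{n}r_i\cdot\rchi_{\theta_{S_i}^{c}}$ with $r_1\le\cdots\le r_n$ and pairwise disjoint $\theta_{S_i}^{c}$, and set $k\coloneqq\max\{i\mid r_i<0\}$, so that $g^{-}=\tsum_{i=1}^{k}(-r_i)\cdot\rchi_{\theta_{S_i}^{c}}$ and $g^{+}=\tsum_{i=k+1}^{n}r_i\cdot\rchi_{\theta_{S_i}^{c}}$, exactly as in the proof of Proposition \ref{Prop_SimpleFunctions}. By Lemma \ref{Characteristic_prop}(2), $g\cdot\rchi_{\theta_S^{c}}=\tsum_{i=1}^{n}r_i\cdot\rchi_{\theta_{S_i}^{c}\wedge\theta_S^{c}}$, and the elements $\theta_{S_i}^{c}\wedge\theta_S^{c}$ remain pairwise disjoint. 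Since the positive and negative parts of such a disjoint representation are read off directly from the signs of the coefficients, the two displayed identities follow at once, once more via Lemma \ref{Characteristic_prop}(2).

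With this in hand, Proposition \ref{NonSimpleProp03} applied to the nonnegative functions $g^{+}$ and $g^{-}$ gives $\int_S g^{+}\,d\mu=\int g^{+}\cdot\rchi_{\theta_S^{c}}\,d\mu=\int(g\cdot\rchi_{\theta_S^{c}})^{+}\,d\mu$, and likewise $\int_S g^{-}\,d\mu=\int(g\cdot\rchi_{\theta_S^{c}})^{-}\,d\mu$. The integrability of $g$ over $S$ means that one of $\int_S g^{+}\,d\mu$, $\int_S g^{-}\,d\mu$ is finite; by these identities the corresponding integral of $(g\cdot\rchi_{\theta_S^{c}})^{\pm}$ is finite, so $g\cdot\rchi_{\theta_S^{c}}$ is integrable in the sense of Definition \ref{Integral_Simple}. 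Subtracting the two identities then yields
\[\int g\cdot\rchi_{\theta_S^{c}}\,d\mu=\int_S g^{+}\,d\mu-\int_S g^{-}\,d\mu=\int_S g\,d\mu,\]
as required.

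The only genuinely delicate point is the commutation identity for the positive and negative parts; everything else is a formal consequence of Proposition \ref{NonSimpleProp03} and Definition \ref{Integral_Simple}. I expect that point to be routine once the pairwise disjoint representation of $g\cdot\rchi_{\theta_S^{c}}$ is written down, since disjointness of the supports guarantees that no cancellation occurs between the positive and negative contributions and hence that the sign of each coefficient alone determines which part a summand belongs to.
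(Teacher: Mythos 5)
Your proposal is correct and follows essentially the same route as the paper: decompose $g=g^{+}-g^{-}$, observe that $(g\cdot\rchi_{\theta_S^{c}})^{+}=g^{+}\cdot\rchi_{\theta_S^{c}}$ and $(g\cdot\rchi_{\theta_S^{c}})^{-}=g^{-}\cdot\rchi_{\theta_S^{c}}$, and apply Proposition \ref{NonSimpleProp03} to each part. The only difference is cosmetic: the paper simply asserts the commutation identity, whereas you justify it via the pairwise disjoint representation, which is a welcome extra detail rather than a divergence in method.
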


\begin{proof}
If $g$ is nonnegative, the result follows from Proposition \ref{NonSimpleProp03}. In the case of a general simple function $g$, note that $(g\cdot\rchi_{\theta_S^{c}})^{+}=g^{+}\cdot\rchi_{\theta_S^{c}}$ and $(g\cdot\rchi_{\theta_S^{c}})^{-}=g^{-}\cdot\rchi_{\theta_S^{c}}$. Thus, because $(g\cdot\rchi_{\theta_S^{c}})^{+}$ and $(g\cdot\rchi_{\theta_S^{c}})^{-}$ are nonnegative,
\begin{gather*}
\int(g\cdot\rchi_{\theta_S^{c}})^{+}d\mu=\int g^{+}\cdot\rchi_{\theta_S^{c}}\,d\mu=\int_S g^{+}\,d\mu\,\,\textrm{ and }\\
\int(g\cdot\rchi_{\theta_S^{c}})^{-}d\mu=\int g^{-}\cdot\rchi_{\theta_S^{c}}\,d\mu=\int_S g^{-}\,d\mu.\qedhere
\end{gather*}
\end{proof}

We have seen that the integral of a nonnegative simple function over an $S\in\SL(L)$  is always nonnegative. In the following, we will weaken the assumption that ``a simple function $g$ is nonnegative'' so that the integral of $g$ over complemented $\sigma$-sublocales is still nonnegative.

\begin{Proposition}\label{SimpleProp03}
If $g\in\SM(\Cong(L))$ is integrable over a complemented $\sigma$-sublocale $S\in\SL(L)$ and $\theta_S^{c}\wedge g(\inff,0)=0$, then
\[
\int_{S}g\,d\mu\geq 0.
\]
\end{Proposition}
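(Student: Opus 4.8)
The plan is to evaluate $\int_S g\,d\mu$ through the explicit formula of Proposition~\ref{SimpleProp01} and to show that the hypothesis $\theta_S^{c}\wedge g(\inff,0)=0$ forces every \emph{negative} coefficient of $g$ to contribute nothing.

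First I would take the canonical representation $g=\tsum_{i=1}^{n}r_i\cdot\rchi_{\theta_{S_i}^{c}}$, with $r_1<\cdots<r_n$ and pairwise disjoint $\theta_{S_1}^{c},\ldots,\theta_{S_n}^{c}\in\Cd\Cong(L)$ (Proposition~\ref{FSimple_equiv}), for complemented $\sigma$-sublocales $S_1,\ldots,S_n$. Reading the value at $q=0$ off the table in Corollary~\ref{SFunction_canonical} gives
\[
g(\inff,0)=\tbigvee_{i\,:\,r_i<0}\theta_{S_i}^{c},
\]
since $0$ falls in the slot $(r_k,r_{k+1}]$ with $k$ the largest index satisfying $r_k<0$ (and $g(\inff,0)=0_{\Cong(L)}$ when there is no such index).

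Next I would use that $\Cong(L)$ is a frame to distribute the meet appearing in the hypothesis:
\[
0_{\Cong(L)}=\theta_S^{c}\wedge g(\inff,0)=\tbigvee_{i\,:\,r_i<0}\big(\theta_S^{c}\wedge\theta_{S_i}^{c}\big),
\]
so that $\theta_S^{c}\wedge\theta_{S_i}^{c}=0_{\Cong(L)}$ for every $i$ with $r_i<0$. Writing $\theta_S^{c}=\theta_{S^{c}}$ and $\theta_{S_i}^{c}=\theta_{S_i^{c}}$ and translating to the coframe $\SL(L)=\Cong(L)^{op}$ — where meets become joins and $0_{\Cong(L)}$ corresponds to $1_{\SL(L)}=L$ — this identity reads $S^{c}\vee S_i^{c}=L$. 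Taking complements in $\SL(L)$ (all the $\sigma$-sublocales involved are complemented) yields $S\wedge S_i=0_{\SL(L)}$, and hence $\mu(S_i\wedge S)=0$ by (M1), for each $i$ with $r_i<0$.

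Finally, since $g$ is integrable over $S$, Proposition~\ref{SimpleProp01} gives $\int_S g\,d\mu=\tsum_{i=1}^{n}r_i\mu(S_i\wedge S)$. The terms with $r_i<0$ vanish by the previous step, while the remaining terms have $r_i\ge0$ and $\mu(S_i\wedge S)\in[0,\infty]$, so each is nonnegative; therefore $\int_S g\,d\mu\ge0$. I expect the only delicate point to be the bookkeeping in passing between the frame $\Cong(L)$ and the coframe $\SL(L)$: one must respect the order reversal and the behaviour of complements so that the frame-level identity $\theta_S^{c}\wedge\theta_{S_i}^{c}=0_{\Cong(L)}$ becomes exactly $S\wedge S_i=0_{\SL(L)}$, which is what lets (M1) conclude $\mu(S_i\wedge S)=0$.
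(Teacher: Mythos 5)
Your proof is correct, and its skeleton matches the paper's: reduce to the sum formula of Proposition \ref{SimpleProp01} and show that $\mu(S_i\wedge S)=0$ for every index with $r_i<0$. Where you genuinely differ is in how that vanishing is obtained. The paper works with an arbitrary pairwise-disjoint representation, so Proposition \ref{Other_formula} expresses $g(\inff,q)$ (for $r_k<q<0$, $k$ the last negative index) as the \emph{meet} $\tbigwedge_{j>k}\theta_{S_j}$, i.e.\ the complement of a join; to extract the conclusion it then needs the lattice computation $\theta_{S_i}\vee\theta_S^{c}=(\theta_{S_i}\vee\theta_S^{c})\wedge\big(\theta_{S_i}\vee\tbigwedge_{j>i}\theta_{S_j}\big)=\theta_{S_i}$, giving the containment $S_i\le S^{c}$. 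You instead insist on the canonical form, whose covering property $\tbigvee_{i}\theta_{S_i}^{c}=1$ turns $g(\inff,0)$ into the \emph{join} $\tbigvee_{i\,:\,r_i<0}\theta_{S_i}^{c}$ (Corollary \ref{SFunction_canonical}); a single application of frame distributivity then yields $\theta_S^{c}\wedge\theta_{S_i}^{c}=0$ outright, and passing to $S\wedge S_i=0_{\SL(L)}$ is just the anti-isomorphism between $\Cong(L)$ and $\SL(L)$ plus De Morgan for complemented elements. Your route is shorter and replaces the paper's technical lattice manipulation by distributivity, at no real cost since Proposition \ref{FSimple_equiv} always supplies the canonical form; the paper's version proves marginally more along the way (the containment $S_i\le S^{c}$, valid for any pairwise-disjoint representation). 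One small bookkeeping point: your slot argument ``$0$ falls in $(r_k,r_{k+1}]$'' presupposes that some $r_i\ge 0$ exists; when every $r_i<0$ the table instead gives $g(\inff,0)=1$, which still equals $\tbigvee_{i\,:\,r_i<0}\theta_{S_i}^{c}$ by the covering property, so the formula you rely on holds in all cases.
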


\begin{proof}
Write $g=\tsum_{i=1}^{n}r_i\cdot\rchi_{\theta_{S_i}^{c}}$ where $\theta_{S_1}^{c},\ldots ,\theta_{S_n}^{c}$ are pairwise disjoint in $\Cd\Cong(L)$. By Proposition \ref{SimpleProp01},
\[\int_{S} g\,d\mu=\tsum_{i=1}^{n}r_i\mu(S_i\wedge S).\]

If $r_i\geq 0$ for  $i=1,\ldots ,n$, then the result is trivial. On the other hand, if there is some $i\in\{1,\ldots ,n\}$ such that $r_i< 0$, let $k\coloneqq\max\{i\mid r_i< 0\}$ and
take $q\in\Q$ such that $r_i\leq r_k< q< 0$. Then $\theta_S^{c}\wedge g(\inff,q)\leq \theta_S^{c}\wedge g(\inff,0)=0$, and by Proposition \ref{Other_formula} we get
\[\theta_S^{c}\wedge\tbigwedge_{j=i+1}^{n}\theta_{S_j}\leq\theta_S^{c}\wedge\tbigwedge_{j=k+1}^{n}\theta_{S_j}=\theta_S^{c}\wedge\big(\tbigvee_{j=k+1}^{n}\theta_{S_j}^{c}\big)^{c}=\theta_{S}^{c}\wedge g(\inff,q)=0.\]
Finally, as $\theta_{S_i}\vee\theta_{S_j}=1$ for any $j\neq i$, we have $\theta_{S_i}\vee\tbigwedge_{j=i+1}^{n}\theta_{S_j}=1$ and
\[\theta_{S_i}\vee\theta_S^{c}=(\theta_{S_i}\vee\theta_S^{c})\wedge\big(\theta_{S_i}\vee\tbigwedge_{j=i+1}^{n}\theta_{S_j}\big)=\theta_{S_i}\vee\big(\theta_S^{c}\wedge\tbigwedge_{j=i+1}^{n}\theta_{S_j}\big)=\theta_{S_i}.\]
Hence $S_i\leq S^{c}$ whenever $r_i< 0$, which implies $\mu(S\wedge S_i)=\mu(0_{\SL(L)})=0$.
\end{proof}

Next, we present some elementary properties of the integral.

\begin{Proposition}\label{SimpleProp04}
Let $g\in\SM(\Cong(L))$ and $\lambda\in\Q$. If $g$ is integrable over $S\in\SL(L)$, then $\lambda\cdot g$ is also integrable over $S$ and
\[\int_{S}\lambda\cdot g\,d\mu=\lambda\int_{S}g\,d\mu.\]
\end{Proposition}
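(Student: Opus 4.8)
The plan is to reduce the statement to the formula from Proposition \ref{SimpleProp01} and then split into cases according to the sign of $\lambda$. First I would fix a representation $g=\tsum_{i=1}^{n}r_i\cdot\rchi_{\theta_{S_i}^{c}}$ with $\theta_{S_1}^{c},\ldots,\theta_{S_n}^{c}$ pairwise disjoint in $\Cd\Cong(L)$ (as always available by Proposition \ref{FSimple_equiv}). By Proposition \ref{Prop_SimpleFunctions}, $\lambda\cdot g=\tsum_{i=1}^{n}(\lambda r_i)\cdot\rchi_{\theta_{S_i}^{c}}$ is again simple with the same pairwise-disjoint complemented congruences, so once integrability of $\lambda\cdot g$ over $S$ is established, Proposition \ref{SimpleProp01} gives
\[
\int_{S}\lambda\cdot g\,d\mu=\tsum_{i=1}^{n}(\lambda r_i)\mu(S_i\wedge S)=\lambda\tsum_{i=1}^{n}r_i\mu(S_i\wedge S)=\lambda\int_{S}g\,d\mu.
\]

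The remaining work is to verify that $\lambda\cdot g$ is indeed integrable over $S$, since the computation above presupposes that Proposition \ref{SimpleProp01} applies. Here I would dispose of the trivial case $\lambda=0$ separately: then $\lambda\cdot g=\pmb{0}$ is nonnegative, hence automatically integrable, and both sides of the claimed equality vanish. For $\lambda\neq0$ I would distinguish $\lambda>0$ and $\lambda<0$. When $\lambda>0$ one has $(\lambda\cdot g)^{+}=\lambda\cdot g^{+}$ and $(\lambda\cdot g)^{-}=\lambda\cdot g^{-}$, so the positive and negative parts of $\lambda\cdot g$ are just nonnegative scalar multiples of those of $g$; since $g$ is integrable over $S$, one of $\int_S g^{+}\,d\mu$ and $\int_S g^{-}\,d\mu$ is finite, and scaling by the positive constant $\lambda$ preserves finiteness, so $\lambda\cdot g$ is integrable over $S$. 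When $\lambda<0$ the positive and negative parts swap: $(\lambda\cdot g)^{+}=(-\lambda)\cdot g^{-}$ and $(\lambda\cdot g)^{-}=(-\lambda)\cdot g^{+}$, and again finiteness of at least one of the two integrals is inherited because $-\lambda>0$.

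The main obstacle, such as it is, lies precisely in this bookkeeping of the integrability hypothesis under sign change, rather than in any deep computation: one must be careful that the defining condition of integrability in Definition \ref{Integral_Simple} (\emph{at least one} of the two integrals finite, allowing the value $\pm\infty$) is genuinely preserved, and that the convention $0\cdot\infty\coloneqq0$ does not interfere. Once integrability of $\lambda\cdot g$ over $S$ is secured in each case, the linearity of the finite sum over $\Q$ together with Proposition \ref{SimpleProp01} closes the argument immediately, with no need to revisit the positive/negative decomposition explicitly in the final computation.
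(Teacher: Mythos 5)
Your proof is correct and follows essentially the same route as the paper's: the same case split on the sign of $\lambda$, the same identities $(\lambda\cdot g)^{+}=\lambda\cdot g^{+}$, $(\lambda\cdot g)^{-}=\lambda\cdot g^{-}$ for $\lambda>0$ and the swapped identities $(\lambda\cdot g)^{+}=(-\lambda)\cdot g^{-}$, $(\lambda\cdot g)^{-}=(-\lambda)\cdot g^{+}$ for $\lambda<0$, used to transfer integrability from $g$ to $\lambda\cdot g$. The only cosmetic difference is that you obtain the final formula by invoking Proposition \ref{SimpleProp01} for $\lambda\cdot g$, whereas the paper recomputes it through the $g^{+}$/$g^{-}$ decomposition; the underlying computation is identical.
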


\begin{proof}
Write $g=\tsum_{i=1}^{n}r_i\cdot\rchi_{\theta_{S_i}^{c}}$ with $\theta_{S_1}^{c},\ldots ,\theta_{S_n}^{c}\in\Cd\Cong(L)$ pairwise disjoint. Recall that
\[\lambda\cdot g=\tsum_{i=1}^{n}(\lambda r_i)\cdot\rchi_{\theta_{S_i}^{c}}.\]
If $\lambda=0$, then $\lambda\cdot g=\pmb{0}$ and the claim holds trivially.

Suppose that $\lambda>0$. If $g$ is nonnegative, then
\[
\int_{S}\lambda\cdot g\,d\mu=\tsum_{i=1}^{n}\lambda r_i\mu(S_i\wedge S)=\lambda\tsum_{i=1}^{n}r_i\mu(S_i\wedge S)=\lambda\int_{S}g\,d\mu.
\]
If $g$ is a general simple function, since $(\lambda\cdot g)^{+}=\lambda\cdot g^{+}$ and $(\lambda\cdot g)^{-}=\lambda\cdot g^{-}$, we have
\[\int_{S}(\lambda\cdot g)^{+}\,d\mu=\lambda\int_{S} g^{+}\,d\mu\,\,\textrm{ and }\,\,\int_{S}(\lambda\cdot g)^{-}\,d\mu=\lambda\int_{S} g^{-}\,d\mu.\]
So $g$ is integrable over $S$ if and only if $\lambda\cdot g$ is integrable over $S$, and
\[
\int_{S}\lambda\cdot g\,d\mu=\lambda\int_{S} g^{+}\,d\mu-\lambda\int_{S} g^{-}\,d\mu=\lambda\big(\int_{S} g^{+}\,d\mu-\int_{S} g^{-}\,d\mu\big)=\lambda\int_{S} g\,d\mu.
\]

Finally, for $\lambda<0$  the arguments are similar to the ones used for a positive scalar and a general simple function, taking note that $\lambda\cdot g=(-\lambda)\cdot(-g)$ with $-\lambda>0$, $(\lambda\cdot g)^{+}=(-\lambda)\cdot g^{-}$ and $(\lambda\cdot g)^{-}=(-\lambda)\cdot g^{+}$.
\end{proof}

\begin{Proposition}\label{SimpleProp05}
Let $g,h\in\SM(\Cong(L))$ be integrable over $S\in\SL(L)$. If $g+h$ is also integrable over $S$, then
\[\int_{S}g\,d\mu+\int_{S}h\,d\mu=\int_{S}(g+h)\,d\mu.\]
\end{Proposition}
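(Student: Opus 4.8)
The plan is to pass to a common refinement of $g$ and $h$, read off all three integrals from Proposition \ref{SimpleProp01}, and then convert the desired equality into an identity about finite sums of measures. First I would fix canonical representations $g=\tsum_{i=1}^{n}r_i\cdot\rchi_{\theta_{S_i}^{c}}$ and $h=\tsum_{j=1}^{m}s_j\cdot\rchi_{\theta_{T_j}^{c}}$, so that the $\theta_{S_i}^{c}$ (resp. the $\theta_{T_j}^{c}$) are pairwise disjoint in $\Cd\Cong(L)$ with join $1_{\Cong(L)}$. Repeating the computation from the proof of Proposition \ref{Prop_SimpleFunctions} gives
\[g+h=\tsum_{i=1}^{n}\tsum_{j=1}^{m}(r_i+s_j)\cdot\rchi_{\theta_{S_i}^{c}\wedge\theta_{T_j}^{c}},\]
and the family $\{\theta_{S_i}^{c}\wedge\theta_{T_j}^{c}\}_{i,j}$ is again pairwise disjoint in $\Cd\Cong(L)$.

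Since $g$, $h$ and $g+h$ are all integrable over $S$, Proposition \ref{SimpleProp01} applies to each of these representations. Using that $\SL(L)=\Cong(L)^{op}$ identifies $\theta_{S_i}^{c}\wedge\theta_{T_j}^{c}$ with $\theta_{S_i\wedge T_j}^{c}$ (De Morgan for complemented congruences together with $\theta_{S_i}\vee\theta_{T_j}=\theta_{S_i\wedge T_j}$), I obtain
\[\int_S g\,d\mu=\tsum_i r_i\mu(S_i\wedge S),\qquad\int_S h\,d\mu=\tsum_j s_j\mu(T_j\wedge S),\]
and
\[\int_S(g+h)\,d\mu=\tsum_{i,j}(r_i+s_j)\mu(S_i\wedge T_j\wedge S).\]
The statement is thereby reduced to the measure-theoretic identity $\tsum_{i,j}(r_i+s_j)\mu(S_i\wedge T_j\wedge S)=\tsum_i r_i\mu(S_i\wedge S)+\tsum_j s_j\mu(T_j\wedge S)$.

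To prove that identity I would establish the partition formula $\mu(S_i\wedge S)=\tsum_j\mu(S_i\wedge T_j\wedge S)$ for each fixed $i$, together with its mirror image in $j$. Here finite distributivity in the coframe $\SL(L)$ and $\tbigvee_j T_j=1_{\SL(L)}$ give $S_i\wedge S=\tbigvee_j(S_i\wedge T_j\wedge S)$ as a join of a pairwise disjoint finite family; and finite additivity of $\mu$ over disjoint elements — a consequence of (M1) and (M3), since $x\wedge y=0_{\SL(L)}$ forces $\mu(x\vee y)=\mu(x)+\mu(y)$ — turns this into the claimed sum of measures. Substituting the two partition formulas and collecting the $r_i$- and $s_j$-terms separately then yields the identity.

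The delicate point is arithmetic in $\eR$: the coefficients $r_i+s_j$ may have mixed signs while some $\mu(S_i\wedge T_j\wedge S)$ are $+\infty$, so neither the termwise split $(r_i+s_j)\mu=r_i\mu+s_j\mu$ nor the regrouping of the double sum is automatic. This is exactly where the three integrability hypotheses enter: integrability of $g+h$ (resp. of $g$, of $h$) guarantees that at most one of $\int_S(g+h)^{\pm}$ (resp. $\int_S g^{\pm}$, $\int_S h^{\pm}$) is infinite, which is what prevents an $\infty-\infty$ from arising. I would therefore carry out the final step as in the proof of Proposition \ref{SimpleProp01}, splitting each integral into its nonnegative- and negative-coefficient parts so that every grouped sum lives in $[0,\infty]$, and forming differences only at the very end; this extended-real bookkeeping, rather than any conceptual difficulty, is the main obstacle.
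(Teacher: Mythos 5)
Your proposal takes essentially the same route as the paper's proof, which is a one-liner: rewrite $g$, $h$ and $g+h$ over the common refinement $\{\theta_{S_i}^{c}\wedge\theta_{T_j}^{c}\}_{i,j}$ from the proof of Proposition \ref{Prop_SimpleFunctions}, evaluate all three integrals by Proposition \ref{SimpleProp01}, and invoke the measure axioms. Your identification $\theta_{S_i}^{c}\wedge\theta_{T_j}^{c}=\theta_{S_i\wedge T_j}^{c}$, the partition formulas $\mu(S_i\wedge S)=\tsum_{j}\mu(S_i\wedge T_j\wedge S)$ and $\mu(T_j\wedge S)=\tsum_{i}\mu(S_i\wedge T_j\wedge S)$ (via distributivity in $\SL(L)$ and finite additivity of $\mu$ on disjoint elements, the latter from (M1) and (M3)), and the resulting reduction to a finite-sum identity are exactly the ``straightforward consequence of $\mu$ being a measure'' that the paper leaves implicit.

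The flaw is in your final paragraph: the three integrability hypotheses do \emph{not} rule out $\infty-\infty$. Take a complemented $R\in\SL(L)$ with $\mu(R\wedge S)=\infty$ (such data exist: the map with $\mu(0_{\SL(L)})=0$ and $\mu(T)=\infty$ for all $T\neq 0_{\SL(L)}$ satisfies (M1)--(M4), and one can take $R=S=1_{\SL(L)}$), and set $g\coloneqq\rchi_{\theta_R^{c}}$, $h\coloneqq-g$. Then $\int_S g^{-}\,d\mu=0=\int_S h^{+}\,d\mu$, so $g$ and $h$ are integrable over $S$, and $g+h=\pmb{0}$ is integrable with integral $0$; yet $\int_S g\,d\mu=+\infty$ and $\int_S h\,d\mu=-\infty$, so the left-hand side of the proposition is undefined, and your termwise regrouping breaks precisely at the cell where $r_i+s_j=0$ but $r_i=1$, $s_j=-1$ and $\mu(S_i\wedge T_j\wedge S)=\infty$. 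This is really a defect of the statement itself --- the paper's one-line proof skips over it as well --- so it is not something your strategy could have repaired; the proposition must be read with the implicit extra hypothesis that $\int_S g\,d\mu+\int_S h\,d\mu$ is a well-defined extended real. Under that hypothesis your bookkeeping plan does succeed: writing $r^{+}\coloneqq\max\{r,0\}$ and $r^{-}\coloneqq\max\{-r,0\}$ for $r\in\Q$, summing the scalar identity $(r_i+s_j)^{+}+r_i^{-}+s_j^{-}=(r_i+s_j)^{-}+r_i^{+}+s_j^{+}$ against $\mu(S_i\wedge T_j\wedge S)$ gives
\[
\int_S (g+h)^{+}\,d\mu+\int_S g^{-}\,d\mu+\int_S h^{-}\,d\mu=\int_S (g+h)^{-}\,d\mu+\int_S g^{+}\,d\mu+\int_S h^{+}\,d\mu
\]
as an identity in $[0,\infty]$, where every regrouping is legitimate because all terms are nonnegative; a short case analysis on which of the six terms are infinite then yields the claimed equality whenever the left-hand side of the proposition is defined.
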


\begin{proof}
Recalling Proposition \ref{Prop_SimpleFunctions} and the expression of $g+h$ obtained in its proof, this is a straightforward consequence of $\mu$ being a measure.
\end{proof}

In particular, we can apply Proposition \ref{SimpleProp05} whenever $g+h$ is nonnegative for some $g,h\in\SM(\Cong(L))$. Combining Propositions \ref{SimpleProp05} and \ref{SimpleProp04}, we get the monotonicity of the integral.

\begin{Proposition}\label{SimpleProp07}
Let $g,h\in\SM(\Cong(L))$ with $g\leq h$. If $g$ and $h$ are integrable over $S\in\SL(L)$, then
\[\int_{S}g\,d\mu\leq\int_{S}h\,d\mu.\]
\end{Proposition}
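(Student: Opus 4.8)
The plan is to reduce the statement to the already-established nonnegativity of the integral of a nonnegative simple function, using the linearity supplied by Propositions \ref{SimpleProp04} and \ref{SimpleProp05}. The key observation is that the difference $h-g$ is again a simple function (by Proposition \ref{Prop_SimpleFunctions}) and that the hypothesis $g\leq h$ forces $h-g\geq\pmb{0}$, since $\SM(\Cong(L))$ is a partially ordered ring in which addition respects the order.

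First I would record that $h-g$ is nonnegative, hence integrable over $S$ (as noted after Definition \ref{Integral_Simple}, a nonnegative simple function is always integrable), and that
\[
\int_{S}(h-g)\,d\mu\geq 0.
\]
This nonnegativity is exactly the bound established right after Definition \ref{Integral_Simple_nonnegative}: writing the canonical form of $h-g$ as $\tsum_{i}s_i\cdot\rchi_{\theta_{T_i}^{c}}$, Corollary \ref{SFunction_canonical} gives $s_i\geq 0$, so every summand $s_i\mu(T_i\wedge S)$ is nonnegative.

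Next I would invoke linearity to relate $\int_S h$, $\int_S g$ and $\int_S(h-g)$. By Proposition \ref{SimpleProp04} with $\lambda=-1$, the function $-g$ is integrable over $S$ with $\int_S(-g)\,d\mu=-\int_S g\,d\mu$. Since $h$ and $-g$ are both integrable over $S$ and their sum $h+(-g)=h-g$ is integrable over $S$, Proposition \ref{SimpleProp05} applies and yields
\[
\int_{S}h\,d\mu+\int_{S}(-g)\,d\mu=\int_{S}(h-g)\,d\mu.
\]
Combining the two displays gives $\int_{S}h\,d\mu-\int_{S}g\,d\mu=\int_{S}(h-g)\,d\mu\geq 0$, which is the claim.

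The main obstacle I anticipate is the bookkeeping around infinite values: the integrals $\int_S g\,d\mu$ and $\int_S h\,d\mu$ may a priori equal $\pm\infty$, so I must ensure the subtraction $\int_{S}h\,d\mu-\int_{S}g\,d\mu$ is well-defined and that the rearrangement above is legitimate in $[-\infty,+\infty]$. This is exactly guarded by the integrability hypotheses, which force at least one of the positive and negative parts to have finite integral — precisely the condition required to apply Propositions \ref{SimpleProp04} and \ref{SimpleProp05}. The only remaining point needing a line of justification is the order-compatibility $g\leq h\Rightarrow h-g\geq\pmb{0}$, which follows from the ring and order structure of $\SM(\Cong(L))$.
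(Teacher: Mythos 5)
Your proof is correct and follows essentially the same route as the paper's: the paper likewise observes that $h-g$ is nonnegative (hence integrable over $S$) and writes $0\leq\int_{S}(h-g)\,d\mu=\int_{S}h\,d\mu-\int_{S}g\,d\mu$, with the splitting justified by Propositions \ref{SimpleProp04} and \ref{SimpleProp05} exactly as you do. Your version merely makes explicit the details (the canonical-form argument for nonnegativity of $\int_{S}(h-g)\,d\mu$ and the $\lambda=-1$ instance of Proposition \ref{SimpleProp04}) that the paper leaves implicit.
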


\begin{proof} As $h-g$ is nonnegative, and therefore integrable over $S$,
\[0\leq\int_S h-g\,d\mu=\int_S h\,d\mu-\int_S g\,d\mu.\qedhere\]
\end{proof}

A more technical proof allows us to weaken the assumption that $g\leq h$ when we are taking the integral over a complemented $\sigma$-sublocale $S$. We point out that $h-g$ does not need to be integrable over $S$ in the next proposition.

\begin{Proposition}
If $g,h\in\SM(\Cong(L))$ are integrable over a complemented $S\in\SL(L)$ such that $\theta_S^{c}\wedge(h-g)(\inff,0)=0$, then
\[\int_{S}g\,d\mu\leq\int_{S}h\,d\mu.\]
\end{Proposition}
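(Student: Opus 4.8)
The plan is to avoid forming $h-g$ directly (that is exactly the object that may fail to be integrable over $S$) and instead to exploit the reduction to integrals over all of $L$ furnished by Proposition~\ref{SimpleProp02}. Since $S$ is complemented and $g,h$ are integrable over $S$, that proposition tells us that $g\cdot\rchi_{\theta_S^{c}}$ and $h\cdot\rchi_{\theta_S^{c}}$ are integrable (over $L$) with $\int_S g\,d\mu=\int g\cdot\rchi_{\theta_S^{c}}\,d\mu$ and $\int_S h\,d\mu=\int h\cdot\rchi_{\theta_S^{c}}\,d\mu$. Hence it suffices to prove $\int g\cdot\rchi_{\theta_S^{c}}\,d\mu\le\int h\cdot\rchi_{\theta_S^{c}}\,d\mu$, and for this I would establish the much stronger pointfree inequality $g\cdot\rchi_{\theta_S^{c}}\le h\cdot\rchi_{\theta_S^{c}}$ and then invoke the monotonicity of the integral, Proposition~\ref{SimpleProp07}. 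The gain is that $(h-g)\cdot\rchi_{\theta_S^{c}}$ will be \emph{nonnegative}, so it is automatically integrable and no $\infty-\infty$ indeterminacy ever arises; this is precisely why $h-g$ itself need not be integrable over $S$.

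Next, for the computational heart, I would put $g$ and $h$ over a common refinement, writing $g=\tsum_{k}r_k\cdot\rchi_{c_k}$ and $h=\tsum_{k}s_k\cdot\rchi_{c_k}$ for pairwise disjoint $c_1,\dots,c_m\in\Cd\Cong(L)$ with $\tbigvee_k c_k=1$. Then $h-g=\tsum_k(s_k-r_k)\cdot\rchi_{c_k}$, and by the canonical-form description (Corollary~\ref{SFunction_canonical}, or Proposition~\ref{Other_formula}) its value at $(\inff,0)$ is $(h-g)(\inff,0)=\tbigvee\{c_k\mid s_k-r_k<0\}$. Using $\rchi_{c_k}\cdot\rchi_{\theta_S^{c}}=\rchi_{c_k\wedge\theta_S^{c}}$ from Lemma~\ref{Characteristic_prop} together with ring arithmetic, one gets $(h-g)\cdot\rchi_{\theta_S^{c}}=\tsum_k(s_k-r_k)\cdot\rchi_{c_k\wedge\theta_S^{c}}$, whose pieces are again pairwise disjoint; reading off its value at $(\inff,0)$ (after adjoining the zero piece $\theta_S$, so that the pieces join to $1$) yields $\big((h-g)\cdot\rchi_{\theta_S^{c}}\big)(\inff,0)=\theta_S^{c}\wedge\tbigvee\{c_k\mid s_k-r_k<0\}=\theta_S^{c}\wedge(h-g)(\inff,0)$, which is $0$ by hypothesis. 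Since a simple function $\phi$ satisfies $\phi\ge\pmb{0}$ exactly when $\phi(\inff,0)=0$, this gives $(h-g)\cdot\rchi_{\theta_S^{c}}\ge\pmb{0}$, that is, $g\cdot\rchi_{\theta_S^{c}}\le h\cdot\rchi_{\theta_S^{c}}$.

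Finally, applying Proposition~\ref{SimpleProp07} to the integrable functions $g\cdot\rchi_{\theta_S^{c}}\le h\cdot\rchi_{\theta_S^{c}}$ and translating back through Proposition~\ref{SimpleProp02} gives $\int_S g\,d\mu\le\int_S h\,d\mu$. I expect the only genuine obstacle to lie in the bookkeeping of the middle step: one must justify the identity $\big((h-g)\cdot\rchi_{\theta_S^{c}}\big)(\inff,0)=\theta_S^{c}\wedge(h-g)(\inff,0)$ with care, since the product formulas of Section~\ref{background_MeasFunctions} presuppose nonnegativity of the factors while $h-g$ may change sign. Handling this through an explicit common-refinement representation, rather than through the product formula, is what makes the argument go through, and frame distributivity in $\Cong(L)$ is what converts the single hypothesis $\theta_S^{c}\wedge(h-g)(\inff,0)=0$ into the vanishing of every negative piece $c_k\wedge\theta_S^{c}$.
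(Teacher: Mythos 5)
Your proof is correct, and it shares its combinatorial heart with the paper's own argument: both pass to a common refinement of the two canonical representations and use frame distributivity to convert the single hypothesis $\theta_S^{c}\wedge(h-g)(\inff,0)=0$ into the vanishing, inside $S$, of every piece on which $h-g$ carries a negative coefficient. The overall architecture, however, is genuinely different. The paper never forms the function $(h-g)\cdot\rchi_{\theta_S^{c}}$; it stays at the level of numbers, applying Proposition~\ref{SimpleProp01} to the refined representations of $g$ and $h$ over $S$ and comparing the two integrals term by term (if $S\wedge S_i\wedge T_j\neq 0$ then $s_j\geq r_i$, and otherwise both terms are zero). You instead upgrade the hypothesis to an honest inequality of \emph{functions}, $g\cdot\rchi_{\theta_S^{c}}\leq h\cdot\rchi_{\theta_S^{c}}$, and then delegate everything to the localization result (Proposition~\ref{SimpleProp02}) and the monotonicity result (Proposition~\ref{SimpleProp07}). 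What each route buys: the paper's term-by-term comparison is more self-contained, needing only the integral formula of Proposition~\ref{SimpleProp01} and no ordered-ring machinery; your route isolates a stronger, reusable intermediate statement (the hypothesis literally means ``$g\leq h$ on $S$'') and is more modular, but it quietly relies on $\M(\Cong(L))$ being a lattice-ordered ring — distributivity of the product over the difference, and translation-invariance of the order to pass from $(h-g)\cdot\rchi_{\theta_S^{c}}\geq\pmb{0}$ to $g\cdot\rchi_{\theta_S^{c}}\leq h\cdot\rchi_{\theta_S^{c}}$ — facts available from the cited background but worth flagging explicitly. Your precaution of deriving $(h-g)\cdot\rchi_{\theta_S^{c}}=\tsum_k(s_k-r_k)\cdot\rchi_{c_k\wedge\theta_S^{c}}$ from the refinement representation and bilinearity, rather than from the product formulas of Section~\ref{background_MeasFunctions} (which presuppose nonnegative factors), is exactly the right move, and your nonnegativity criterion ($\phi\geq\pmb{0}$ iff $\phi(\inff,0)=0$) follows directly from the definition of the order; the argument is sound as written.
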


\begin{proof}
Write $g=\tsum_{i=1}^{n}r_i\cdot\rchi_{\theta_{S_i}^{c}}$ and $h=\tsum_{j=1}^{m}s_j\cdot\rchi_{\theta_{T_j}^{c}}$ in their canonical representations. Recalling the proof of Proposition \ref{Prop_SimpleFunctions},
\begin{gather}
h-g=\tsum_{j=1}^{m}\tsum_{i=1}^{n}(s_j-r_i)\cdot\rchi_{\theta_{S_i}^{c}\wedge\theta_{T_j}^{c}},\notag\\
\qquad g=\tsum_{i=1}^{n}\tsum_{j=1}^{m}r_i\cdot\rchi_{\theta_{S_i}^{c}\wedge\theta_{T_j}^{c}}\,\,\textrm{ and }\,\,h=\tsum_{j=1}^{m}\tsum_{i=1}^{n}s_j\cdot\rchi_{\theta_{S_i}^{c}\wedge\theta_{T_j}^{c}},\tag{$*$}\label{ast}
\end{gather}
where $\theta_{S_i}^{c}\wedge\theta_{T_j}^{c}$ are pairwise disjoint elements with $\tbigvee_{i=1}^{n}\tbigvee_{j=1}^{m}(\theta_{S_i}^{c}\wedge\theta_{T_j}^{c})=1$.

For the previous representation of $h-g$ to be canonical, all that is left is to ensure that the coefficients are sorted in strict ascending order and to remove the terms $\theta_{S_i}^{c}\wedge\theta_{T_j}^{c}=0$. After taking those steps (recall Proposition \ref{FSimple_equiv}), the canonical representation of $h-g$ will be a finite sum of the form
\[
h-g=\tsum a_{l}\cdot\rchi_{\theta_{R_l}^{c}},
\]
where $a_l=s_j-r_i$ for some pair $(i,j)$ and $\theta_{R_l}^{c}\geq\theta_{S_i}^{c}\wedge\theta_{T_j}^{c}$.

If $a_l\geq 0$ for all $l$, then $g\leq h$ and the claimed follows from the previous proposition. If there is $l$ such that $a_l=s_j-r_i< 0$, set $k\coloneqq\max\{l\mid a_l< 0\}$.
Since $\Q$ is dense in $\R$, there is $q\in\Q$ such that $s_j-r_i=a_l\leq a_k< q< 0$. By Corollary \ref{SFunction_canonical}, $(h-g)(\inff,q)=\tbigvee_{n=1}^{k}\theta_{R_n}^{c}\geq \theta_{R_l}^{c}\geq\theta_{S_i}^{c}\wedge\theta_{T_j}^{c}$.
Thus,
\[
\theta_S^{c}\wedge\theta_{S_i}^{c}\wedge\theta_{T_j}^{c} \leq\theta_S^{c}\wedge (h-g)(\inff,q)\leq \theta_{S}^{c}\wedge (h-g)(\inff,0)=0.
\]
This shows that $s_j-r_i<0$ implies that $S^{c}\vee S_i^{c}\vee T_j^{c}=1$. In other words, $S\wedge S_i\wedge T_j\neq0$ implies that $s_j\geq r_i$. Hence, representations \eqref{ast} of $g$ and $h$ and Proposition \ref{SimpleProp01} yield the desired inequality.
\end{proof}

Finally, it follows from Propositions \ref{SimpleProp05} and \ref{SimpleProp07} that

\begin{Proposition}\label{SimpleProp06}
If $g,h\in\SM(\Cong(L))$ are summable over $S\in\SL(L)$, then $g+h$ is also summable over $S$.
\end{Proposition}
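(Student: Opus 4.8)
The plan is to reduce everything to the nonnegative case already handled by Propositions \ref{SimpleProp05} and \ref{SimpleProp07}, exploiting the fact (noted after Definition \ref{Integral_Simple}) that a nonnegative simple function is automatically integrable over any $\sigma$-sublocale. Since $g$ and $h$ are summable over $S$, all four integrals $\int_S g^{+}\,d\mu$, $\int_S g^{-}\,d\mu$, $\int_S h^{+}\,d\mu$ and $\int_S h^{-}\,d\mu$ are finite, and I must show that $\int_S(g+h)^{+}\,d\mu$ and $\int_S(g+h)^{-}\,d\mu$ are finite as well.

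First I would establish the two order estimates
\[
(g+h)^{+}\le g^{+}+h^{+}\qquad\textrm{and}\qquad(g+h)^{-}\le g^{-}+h^{-}.
\]
These follow purely from the lattice-ordered structure of $\SM(\Cong(L))$: from $g\le g^{+}$, $h\le h^{+}$ and the monotonicity of addition one gets $g+h\le g^{+}+h^{+}$, while $g^{+}+h^{+}\ge\pmb{0}$ because it is a sum of nonnegative functions; hence $(g+h)^{+}=(g+h)\vee\pmb{0}\le g^{+}+h^{+}$. For the negative parts I would apply the very same argument to $-g$ and $-h$, using $(g+h)^{-}=\big((-g)+(-h)\big)^{+}$ together with $(-g)^{+}=g^{-}$ and $(-h)^{+}=h^{-}$.

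Next, observe that $(g+h)^{+}$ and $g^{+}+h^{+}$ are both nonnegative, hence integrable over $S$. Monotonicity (Proposition \ref{SimpleProp07}) then gives $\int_S(g+h)^{+}\,d\mu\le\int_S(g^{+}+h^{+})\,d\mu$, and additivity for nonnegative summands (Proposition \ref{SimpleProp05}) yields $\int_S(g^{+}+h^{+})\,d\mu=\int_S g^{+}\,d\mu+\int_S h^{+}\,d\mu<\infty$. Combining the two inequalities gives $\int_S(g+h)^{+}\,d\mu<\infty$; the identical argument with the minus signs gives $\int_S(g+h)^{-}\,d\mu\le\int_S g^{-}\,d\mu+\int_S h^{-}\,d\mu<\infty$. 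Thus both parts of $g+h$ have finite integral over $S$, i.e.\ $g+h$ is summable over $S$.

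The only delicate point is the first step: justifying $(g+h)^{\pm}\le g^{\pm}+h^{\pm}$ in $\SM(\Cong(L))$. This is the point-free analogue of the elementary inequality $(a+b)^{+}\le a^{+}+b^{+}$ for reals, and it rests on the monotonicity of addition with respect to the order on measurable functions, together with $g\le g^{+}$ and $g^{+}+h^{+}\ge\pmb{0}$. Once these lattice-theoretic facts are in hand, the remainder is a routine combination of the monotonicity and additivity of the integral already established.
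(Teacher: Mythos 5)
Your proof is correct and follows essentially the same route as the paper: dominate $(g+h)^{+}$ and $(g+h)^{-}$ by nonnegative (hence automatically integrable) simple functions and then invoke Propositions \ref{SimpleProp07} and \ref{SimpleProp05}. The only difference lies in the negative part, where the paper derives the bound $(g+h)^{-}\leq (g+h)^{+}+g^{-}+h^{-}$ from the ring identity $(g+h)^{+}+g^{-}+h^{-}=g^{+}+h^{+}+(g+h)^{-}$, while your symmetric bound $(g+h)^{-}\leq g^{-}+h^{-}$, obtained by applying the positive-part argument to $-g$ and $-h$, is slightly sharper and equally valid.
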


\begin{proof}
Set $f\coloneqq g+h$. Then $f^{+}-f^{-}=g^{+}-g^{-}+h^{+}-h^{-}$ and
\[
f^{+}+g^{-}+h^{-}=g^{+}+h^{+}+f^{-}.
\]
As $g$ and $h$ are summable over $S$, $f^{+}=(g+h)\vee\,\pmb{0}\leq g^{+}+h^{+}$ and $f^{-}\leq f^{+}+g^{-}+h^{-}$, it follows from fact that $g^{+}+h^{+}$ and $f^{+}+g^{-}+h^{-}$ are nonnegative, and thus integrable over $S$, that $f$ is summable over $S$:
\begin{align*}
\int_S f^{+}\,d\mu&\leq \int_S g^{+}\,d\mu+\int_S h^{+}\,d\mu<\infty\textrm{ and }\\
\int_S f^{-}\,d\mu&\leq \int_S f^{+}\,d\mu+\int_S g^{-}\,d\mu+\int_S h^{-}\,d\mu<\infty.\qedhere
\end{align*}
\end{proof}

\medskip
As a result, the integral is linear on the class of summable simple functions, in the sense that for any $r,s\in\Q$ and any $g,h\in\SM(\Cong(L))$ summable over $S\in\SL(L)$,
\[\int_S (r\cdot g+s\cdot h)\,d\mu=r\int_S g\,d\mu\,+\,s\int_S h\,\mu.\]

\section{The indefinite integral}

Given a simple function $g\in\F(L)$, the map $\eta\colon \SL(L)\to[0,\infty]$ defined by
\[
\eta(S)\coloneqq\int_{S}g\,d\mu
\]
is called the {\em indefinite integral} of $g$. In this section, we want to show that whenever $g$ is nonnegative, $\eta$ is a measure on $\SL(L)$.

\begin{Lemma}\label{IntSF_Modular}
For any integrable $g\in\SM(\Cong(L))$ and any $S, T\in\SL(L)$,
\[\int_{S}g\,d\mu+\int_{T}g\,d\mu=\int_{S\vee T}g\,d\mu+\int_{S\wedge T}g\,d\mu.\]
\end{Lemma}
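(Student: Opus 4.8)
The plan is to reduce the modular identity for the integral to the modularity axiom (M3) of the measure $\mu$, applied termwise to a fixed pairwise-disjoint representation of $g$, and then to bootstrap from the nonnegative case to the general one via $g=g^{+}-g^{-}$.

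First I would treat the case where $g$ is nonnegative. By Proposition \ref{FSimple_equiv} I may write $g=\tsum_{i=1}^{n}r_i\cdot\rchi_{\theta_{S_i}^{c}}$ with $\theta_{S_1}^{c},\ldots,\theta_{S_n}^{c}$ pairwise disjoint in $\Cd\Cong(L)$. Since $g$ is integrable over $L$ and each of $S,\,T,\,S\vee T,\,S\wedge T$ lies below $L$, the integral is defined over each of them (by monotonicity, Proposition \ref{NonSimpleProp02}), and Proposition \ref{SimpleProp01} gives $\int_{U}g\,d\mu=\tsum_{i=1}^{n}r_i\mu(S_i\wedge U)$ for every such $U$. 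Summing with the fixed weights $r_i\geq 0$, the claimed identity then reduces to the termwise equalities
\[
\mu(S_i\wedge S)+\mu(S_i\wedge T)=\mu\big(S_i\wedge(S\vee T)\big)+\mu\big(S_i\wedge(S\wedge T)\big).
\]
The key point is that $\SL(L)$, being a coframe, is in particular a distributive lattice, so $(S_i\wedge S)\vee(S_i\wedge T)=S_i\wedge(S\vee T)$, while trivially $(S_i\wedge S)\wedge(S_i\wedge T)=S_i\wedge(S\wedge T)$. Applying (M3) to $x=S_i\wedge S$ and $y=S_i\wedge T$ yields each termwise equality, and summing (in $[0,\infty]$, so with no arithmetic pitfalls) closes this case.

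To pass to a general integrable $g$, I would use $g=g^{+}-g^{-}$ with $g^{+},g^{-}$ nonnegative simple functions and apply the case just settled to each. The subtlety I expect to be the main obstacle is the arithmetic with values in $[0,\infty]$: since $g$ is integrable over $L$, one of $\int_{L}g^{+}\,d\mu$, $\int_{L}g^{-}\,d\mu$ is finite, say $\int_{L}g^{-}\,d\mu<\infty$ (the other case being symmetric), and by monotonicity this forces $\int_{U}g^{-}\,d\mu<\infty$ for every $U\leq L$. With all occurrences of the negative part finite, I may freely regroup
\[
\int_{S}g\,d\mu+\int_{T}g\,d\mu=\Big(\int_{S}g^{+}\,d\mu+\int_{T}g^{+}\,d\mu\Big)-\Big(\int_{S}g^{-}\,d\mu+\int_{T}g^{-}\,d\mu\Big),
\]
apply the already-established nonnegative identity to each parenthesized sum, and regroup back into $\int_{S\vee T}g\,d\mu+\int_{S\wedge T}g\,d\mu$. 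Because the subtracted quantities are finite, no indeterminate $\infty-\infty$ ever arises and every regrouping is legitimate, which gives the lemma.
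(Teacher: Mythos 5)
Your proof is correct, and its core coincides with the paper's: fix a pairwise-disjoint representation $g=\tsum_{i=1}^{n}r_i\cdot\rchi_{\theta_{S_i}^{c}}$, express each integral as $\tsum_{i=1}^{n}r_i\mu(S_i\wedge U)$ via Proposition \ref{SimpleProp01}, and apply (M3) termwise together with finite distributivity of the coframe $\SL(L)$. The structural difference is in how the general case is reached. The paper performs this computation once, directly for an arbitrary integrable simple $g$ (Proposition \ref{SimpleProp01} is already stated for such $g$), after noting that integrability over $L$ yields integrability over every $\sigma$-sublocale; you instead restrict the termwise computation to nonnegative $g$ and then bootstrap via the decomposition $g=g^{+}-g^{-}$. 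Your detour is sound but not needed: nothing in the termwise argument requires $r_i\geq 0$, since integrability makes each sum $\tsum_{i=1}^{n}r_i\mu(S_i\wedge U)$ well-defined. What your version buys is that the extended-real bookkeeping becomes explicit: you record that, say, $\int_{U}g^{-}\,d\mu<\infty$ for every $U$ by monotonicity (Proposition \ref{NonSimpleProp02}), so no $\infty-\infty$ arises in any regrouping. The paper's direct manipulation of possibly infinite sums is legitimate for exactly the same reason (if $\int_{L}g^{-}\,d\mu<\infty$ then $\mu(S_i\wedge U)<\infty$ whenever $r_i<0$), but it leaves this point tacit, so your argument can be read as the same proof with that implicit step spelled out at the cost of an extra reduction.
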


\begin{proof}
First, note that if $g$ is integrable, then $g$ is integrable over any $\sigma$-sublocale of $L$. Now, write $g=\tsum_{i=1}^{n}r_i\cdot\rchi_{\theta_{S_i}^{c}}$ with $\theta_{S_1}^{c},\ldots ,\theta_{S_n}^{c}\in\Cd\Cong(L)$ pairwise disjoint. Since $\mu$ is modular (M3), we have
\[
\begin{split}
\int_{S}g\,d\mu+\int_{T}g\,d\mu&=\tsum_{i=1}^{n}r_i\mu(S_i\wedge S)+\tsum_{i=1}^{n}r_i\mu(S_i\wedge T)\\
&=\tsum_{i=1}^{n}r_i[\mu(S_i\wedge S)+\mu(S_i\wedge T)]\\
&=\tsum_{i=1}^{n}r_i[\mu(S_i\wedge(S\wedge T))+\mu(S_i\wedge(S\vee T))]\\
&=\tsum_{i=1}^{n}r_i\mu(S_i\wedge(S\wedge T))+\tsum_{i=1}^{n}r_i\mu(S_i\wedge(S\vee T))\\
&=\int_{S\wedge T}g\,d\mu+\int_{S\vee T}g\,d\mu.\qedhere
\end{split}
\]
\end{proof}

\begin{Lemma}\label{IntSF_SigmaContinuous}
Let $g\in\SM(\Cong(L))$ be integrable. If $(B_k)_{k\in\N}$ is an increasing sequence in $\SL(L)$ and $B=\tbigvee_{k\in\N}B_k$, then
\[\int_{B}g\,d\mu=\lim_{k\rightarrow+\infty}\int_{B_k}g\,d\mu.\]
In particular, if $g$ is nonnegative,
\[\int_{B}g\,d\mu=\sup_{k\in\N}\int_{B_k}g\,d\mu.\]
\end{Lemma}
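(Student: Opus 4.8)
The plan is to reduce everything to the behaviour of $\mu$ on the increasing sequences $(S_i\wedge B_k)_k$, exploiting a pairwise-disjoint complemented representation of $g$ together with the $\sigma$-continuity axiom (M4). First I would fix a representation $g=\sum_{i=1}^{n}r_i\cdot\rchi_{\theta_{S_i}^{c}}$ with $\theta_{S_1}^{c},\ldots,\theta_{S_n}^{c}$ pairwise disjoint in $\Cd\Cong(L)$, available by Proposition \ref{FSimple_equiv}. Since $g$ is integrable it is integrable over every $\sigma$-sublocale of $L$ (as noted at the start of the proof of Lemma \ref{IntSF_Modular}), so Proposition \ref{SimpleProp01} applies to each $B_k$ and to $B$, giving $\int_{B_k}g\,d\mu=\sum_{i=1}^{n}r_i\mu(S_i\wedge B_k)$ and $\int_{B}g\,d\mu=\sum_{i=1}^{n}r_i\mu(S_i\wedge B)$.

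The crux is the identity $S_i\wedge B=\bigvee_{k\in\N}(S_i\wedge B_k)$ in $\SL(L)$, and this is exactly where I expect the main obstacle: $\SL(L)$ is only a coframe, so finite meets need not distribute over countable joins in general. I would resolve it by passing to the dual frame $\Cong(L)$, where (since joins in $\SL(L)$ are meets in $\Cong(L)$) the identity becomes $\theta_{S_i}\vee\bigwedge_{k}\theta_{B_k}=\bigwedge_{k}(\theta_{S_i}\vee\theta_{B_k})$, and then using that $\theta_{S_i}$ is complemented (its complement being $\theta_{S_i}^{c}\in\Cd\Cong(L)$). For a complemented element $a$ in any frame one has $a\vee\bigwedge_{k}b_k=\bigwedge_{k}(a\vee b_k)$: the inequality $\le$ is automatic, and for $\ge$ set $c=\bigwedge_{k}(a\vee b_k)$, decompose $c=(c\wedge a)\vee(c\wedge a^{c})$ by finite distributivity, observe $c\wedge a\le a$, and note that $c\wedge a^{c}\le(a\vee b_k)\wedge a^{c}=b_k\wedge a^{c}\le b_k$ for every $k$, so that $c\wedge a^{c}\le\bigwedge_{k}b_k$; combining the two pieces gives $c\le a\vee\bigwedge_{k}b_k$. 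Translating back yields the required distributivity. Because $(B_k)_k$ is increasing, $(S_i\wedge B_k)_k$ is increasing, so (M4) gives $\mu(S_i\wedge B)=\sup_{k}\mu(S_i\wedge B_k)$ for each $i$.

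It then remains to pass from this per-$i$ statement to the sum. For nonnegative $g$ every $r_i\ge 0$ and each $(\mu(S_i\wedge B_k))_k$ is increasing, so $\sum_{i}r_i\mu(S_i\wedge B_k)$ is increasing in $k$ and its supremum is $\sum_{i}r_i\sup_{k}\mu(S_i\wedge B_k)=\sum_{i}r_i\mu(S_i\wedge B)$; this is precisely the ``in particular'' assertion $\int_{B}g\,d\mu=\sup_{k}\int_{B_k}g\,d\mu$. For a general integrable $g$ I would split $g=g^{+}-g^{-}$ and apply the nonnegative case to $g^{+}$ and $g^{-}$ separately, so that $\int_{B_k}g^{+}\,d\mu$ and $\int_{B_k}g^{-}\,d\mu$ increase to $\int_{B}g^{+}\,d\mu$ and $\int_{B}g^{-}\,d\mu$ respectively. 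Integrability guarantees that at least one of $\int_{L}g^{+}\,d\mu$, $\int_{L}g^{-}\,d\mu$ is finite, so subtracting the bounded convergent sequence from the other produces no $\infty-\infty$ ambiguity and gives $\lim_{k\to+\infty}\int_{B_k}g\,d\mu=\int_{B}g^{+}\,d\mu-\int_{B}g^{-}\,d\mu=\int_{B}g\,d\mu$ in $[-\infty,+\infty]$. The only genuinely delicate point is the coframe distributivity, which the complementedness of the $\theta_{S_i}$ rescues.
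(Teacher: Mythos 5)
Your proof is correct and follows essentially the same route as the paper's: the same pairwise-disjoint complemented representation via Proposition \ref{SimpleProp01}, the same key identity $S_i\wedge B=\bigvee_{k\in\N}(S_i\wedge B_k)$ justified by complementedness of the $S_i$, and the same appeal to (M4). The only differences are that you spell out the frame-distributivity argument in $\Cong(L)$ and the $g^{+}/g^{-}$ splitting to rule out an $\infty-\infty$ ambiguity, both of which the paper's proof leaves implicit.
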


\begin{proof}
Write $g=\tsum_{i=1}^{n}r_i\cdot\rchi_{\theta_{S_i}^{c}}$ with $\theta_{S_1}^{c},\ldots ,\theta_{S_n}^{c}\in\Cd\Cong(L)$ pairwise disjoint. For each $k\in\N$, $g$ is integrable over $B_k$ and
\[\int_{B_k}g\,d\mu=\tsum_{i=1}^{n}r_i\mu(S_i\wedge B_k).\]
Moreover, $(S_i\wedge B_k)_{k\in\N}$ is increasing, $\mu$ is $\sigma$-continuous (M4), and each $S_i$ being complemented implies that
\[\tbigvee_{k\in\N}(S_i\wedge B_k)= S_i\wedge\tbigvee_{k\in\N}B_k=S_i\wedge B.\]
Therefore,
\[
\begin{split}
\lim_{k\rightarrow+\infty}\int_{B_k}g\,d\mu&=\lim_{k\rightarrow+\infty}\tsum_{i=1}^{n}r_i\mu(S_i\wedge B_k)=\tsum_{i=1}^{n}r_i\lim_{k\rightarrow+\infty}\mu(S_i\wedge B_k)\\
&=\tsum_{i=1}^{n}r_i\sup_{k\in\N}\mu(S_i\wedge B_k)=\tsum_{i=1}^{n}r_i\mu\big(\tbigvee_{k\in\N}(S_i\wedge B_k)\big)\\
&=\tsum_{i=1}^{n}r_i\mu(S_i\wedge B)=\int_{B}g\,d\mu.\qedhere
\end{split}
\]
\end{proof}

\begin{Theorem}
The indefinite integral of a nonnegative function is a measure on $\SL(L)$.
\end{Theorem}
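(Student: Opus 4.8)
The plan is to verify the four defining axioms (M1)--(M4) of a measure for the map $\eta\colon\SL(L)\to[0,\infty]$, $\eta(S)=\int_S g\,d\mu$, where $g$ is the given nonnegative simple function. The essential observation is that the three preceding results of this section were each tailored to one of the measure axioms, so the proof amounts to recognising this correspondence and confirming that their hypotheses are met by $g$; there is no need to re-expand the canonical form of $g$ and re-derive everything term by term from the axioms of $\mu$.

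First I would record that $\eta$ is a legitimate map into $[0,\infty]$: since $g$ is nonnegative it is integrable over every $S\in\SL(L)$ (as noted after Definition~\ref{Integral_Simple}), and $0\le\int_S g\,d\mu\le\infty$ for every such $S$ (as observed following Definition~\ref{Integral_Simple_nonnegative}). I would then dispatch the axioms one by one. For (M1), writing $g=\tsum_{i=1}^{n}r_i\cdot\rchi_{\theta_{S_i}^{c}}$, we obtain $\eta(0_{\SL(L)})=\tsum_{i=1}^{n}r_i\,\mu(S_i\wedge 0_{\SL(L)})=\tsum_{i=1}^{n}r_i\,\mu(0_{\SL(L)})=0$, using (M1) for $\mu$. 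Axiom (M2), the monotonicity $S\le T\Rightarrow\eta(S)\le\eta(T)$, is precisely the content of Proposition~\ref{NonSimpleProp02}. Axiom (M3), the modular law $\eta(S)+\eta(T)=\eta(S\vee T)+\eta(S\wedge T)$, is exactly Lemma~\ref{IntSF_Modular}. Finally, axiom (M4), the $\sigma$-continuity $\eta(\tbigvee_{k\in\N}B_k)=\sup_{k\in\N}\eta(B_k)$ for an increasing sequence $(B_k)_{k\in\N}$ in $\SL(L)$, is the nonnegative (sup-valued) case of Lemma~\ref{IntSF_SigmaContinuous}.

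I do not expect any single step to pose a genuine obstacle, since the real work was carried out in Lemmas~\ref{IntSF_Modular} and \ref{IntSF_SigmaContinuous}. The only point needing care is to confirm that each cited result applies: Proposition~\ref{NonSimpleProp02} is stated for nonnegative $g$, both lemmas require $g$ to be integrable over the relevant $\sigma$-sublocales, and the sup-form of (M4) relies on nonnegativity. Because a nonnegative simple function is automatically integrable over every $\sigma$-sublocale, all these hypotheses hold, and collecting (M1)--(M4) yields that $\eta$ is a measure on $\SL(L)$.
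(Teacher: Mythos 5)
Your proof is correct and follows exactly the paper's own argument: (M1) checked directly from the definition, and (M2), (M3), (M4) obtained from Proposition~\ref{NonSimpleProp02}, Lemma~\ref{IntSF_Modular} and Lemma~\ref{IntSF_SigmaContinuous}, respectively. Your added verification that the hypotheses of the cited results hold (nonnegative simple functions being integrable over every $\sigma$-sublocale) is a welcome refinement but does not change the route.
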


\begin{proof}
Let $g$ be a nonnegative simple function and $\eta$ its indefinite integral.
By the definition of the integral over a $\sigma$-sublocale, (M1) holds trivially. Further, (M2), (M3) and (M4) follow from Proposition \ref{NonSimpleProp02}, Lemma \ref{IntSF_Modular} and Lemma \ref{IntSF_SigmaContinuous}, respectively.
\end{proof}

\section{Point-free setting versus classic setting}\label{Classic_vs_Pointfree_Section}

In this section we show that our definition of integral for simple functions extends the classic Lebesgue integral.
\medskip

Let $(X,\mathcal{A})$ be a {\em measurable space}, i.e., a set $X$ equipped with a $\sigma$-algebra $\mathcal{A}\subseteq\powerset$.
Consider a simple function $\tilde{f}\colon X\to\R$ with codomain in $\Q$, that is, a function of the form
\[\tilde{f}=\tsum_{i=1}^{n}r_i\mathds{1}_{A_i},\]
for some $n\in\N$ and $r_1,\ldots ,r_n\in\Q$, where $\mathds{1}_{A_i}\colon X\to\{0,1\}$ is the indicator (characteristic) function of $A_i$ and $A_1,\ldots ,A_n\in\mathcal{A}$ are pairwise disjoint subsets of $X$ such that  $\tbigcup_{i=1}^{n}A_i=X$. With no loss of generality, we may suppose that $r_1< r_2<\cdots < r_n$. The following table lists the values of $\tilde{f}^{-1}(]-\infty,r[)$ and
$\tilde{f}^{-1}(]r, +\infty[)$ for each $r\in\Q$:

\medskip
\begin{center}
\begin{tabular}{cc|cc}
\toprule[1.25pt] \addlinespace[0.5em]
\multicolumn{2}{c}{$\tilde{f}^{-1}(]-\infty,r[)$} &  \multicolumn{2}{c}{$\tilde{f}^{-1}(]r, +\infty[)$}\\[2mm]
\toprule[1.25pt] \addlinespace[0.5em]
      $\emptyset$ &  if $r\leq r_1$ & $X$ & if $r< r_1$ \\[2mm]
      $A_1$ &  if $r_1<r\leq r_2$ &  $\displaystyle\tbigcup_{i=2}^{n}A_i$ & if $r_1\leq r< r_2$ \\[4mm]
     $A_1\cup A_2$ & if $r_2<r\leq r_3$ &  $\displaystyle\tbigcup_{i=3}^{n}A_i$ &  if $r_2\leq r<r_3$ \\[4mm]
      $\vdots$ & $\vdots$ &      $\vdots$ & $\vdots$\\[2mm]
      $\displaystyle\tbigcup_{i=1}^{n-1}A_i$ &  if $r_{n-1}<r\leq r_n$ &$ A_n$ &  if $r_{n-1}\leq r<r_n$ \\[4mm]
      $X$ & if $r>r_n$ &    $\emptyset$ & if $r\geq r_n$\\[2mm]
      \bottomrule[1.25pt]
\end{tabular}
\end{center}

\medskip
Now consider the $\sigma$-frame $L=\mathcal{A}$. The localic counterpart of the classic simple function $\tilde{f}$ is the localic function
\[f\colon \reals\to\mathcal{A}\in \M(\mathcal{A})\]
determined by the formulas $f(\inff,q)\coloneqq\tilde{f}^{-1}(]-\infty,q[)$ and $f(p,\inff)\coloneqq\tilde{f}^{-1}(]p,+\infty[)$ for all $p,q\in\Q$. In other words, $f$ is the localic simple function
\[f=\tsum_{i=1}^{n}r_i\cdot\rchi_{A_i}.\]
Through the isomorphism $\nabla:\mathcal{A}\to\nabla[\mathcal{A}]$ that embeds $\mathcal{A}$ in $\Cong(L)$, we can identify $f\in\M(\mathcal{A})$ with 
\[\nabla\circ f=\tsum_{i=1}^{n}r_i\cdot\rchi_{\nabla_{A_i}}\in\F(\mathcal{A}),\]
and regard $f$ as an element of $\F(\mathcal{A})$.

This shows that every simple function $\tilde{f}\colon X\to\R$ with codomain in $\Q$ has a localic counterpart in $\M(\mathcal{A})\subseteq\F(\mathcal{A})$. It is then easy to check that simple functions $\tilde{f}\colon X\to\R$ with codomain in $\Q$ are in a one-to-one correspondence  with the simple functions $f\colon \reals\to\Cong(\mathcal{A})$ that are measurable on $\mathcal{A}$. Note that, however, there may exist simple functions in $\F(\mathcal{A})$ which are not in $\M(\mathcal{A})$.

 Next, let
$\lambda\colon \mathcal{A}\to[0,\infty]$ be a measure on $(X,\mathcal{A})$.
By definition, $\lambda$ is a $\sigma$-additive map such that $\lambda(\emptyset)=0$. As $\mathcal{A}$ is a $\sigma$-algebra, $\lambda$ satisfies the axioms (M1)-(M4) (\cite{Simpson2012}), so it is a measure on the join-$\sigma$-complete lattice
\[L=\mathcal{A}\cong\mathfrak{o}[\mathcal{A}]\subseteq\SL(\mathcal{A}).\]

\begin{Remark}
Since $\lambda$ is a measure on $\mathcal{A}$ and every element of $\mathcal{A}$ is complemented, we can apply Theorem 1 of \cite{Simpson2012} to obtain a measure
\[\lambda^{\diamond}\colon \SL(\mathcal{A})\to[0,\infty]\]
on $\SL(\mathcal{A})$ extending $\lambda$, in the sense that $\lambda^{\diamond}(\mathfrak{o}(A))=\lambda(A)$ for every $A\in\mathcal{A}$.
\end{Remark}

Suppose that $\tilde{f}$ is nonnegative, i.e., $0<r_1<\cdots<r_n$. Recall that the (Lebesgue) $\lambda$-integral of $\tilde{f}$ is the value
\[
\int_{X}\tilde{f}\,d\lambda=\tsum_{i=1}^{n}r_i\lambda(A_i).
\]
Then the Lebesgue $\lambda$-integral of $\tilde{f}$ is equal to the localic $\lambda^{\diamond}$-integral of $f$:
\[
\begin{split}
\int_{\mathcal{A}} f\,d\lambda^{\diamond}&=\int_{\mathcal{A}}\,\tsum_{i=1}^{n}r_i\cdot\rchi_{\nabla_{A_i}}d\lambda^{\diamond}
=\tsum_{i=1}^{n}r_i\lambda^{\diamond}(\mathfrak{c}(A_i)^{c})\\
&=\tsum_{i=1}^{n}r_i\lambda^{\diamond}(\mathfrak{o}(A_i))=\tsum_{i=1}^{n}r_i\lambda(A_i)=\int_X\tilde{f}\,d\lambda.
\end{split}
\]

If $\tilde{f}$ is a general simple function, we say that $\tilde{f}$ is $\lambda$-integrable (\cite{EvansGariepy2015}) if
\[\int_X\tilde{f}^{+}\,d\lambda<\infty\,\,\textrm{ or }\,\,\int_X\tilde{f}^{-}\,d\lambda<\infty.\]
Since $\tilde{f}^{+}$ and $\tilde{f}^{-}$ are nonnegative simple functions whose localic counterparts are $f^{+}$ and $f^{-}$, respectively, we have
\[\int_X\tilde{f}^{+}\,d\lambda=\int_{\mathcal{A}} f^{+}\,d\lambda^{\diamond}\,\,\textrm{ and }\,\,\int_X\tilde{f}^{-}\,d\lambda=\int_{\mathcal{A}} f^{-}\,d\lambda^{\diamond}.\]
Thus, not only $\tilde{f}$ is $\lambda$-integrable if and only if $f$ is $\lambda^{\diamond}$-integrable, but we also get that
\[\int_X\tilde{f}\,d\lambda=\int_X\tilde{f}^{+}\,d\lambda-\int_X\tilde{f}^{-}\,d\lambda=\int_{\mathcal{A}} f^{+}\,d\lambda^{\diamond}-\int_{\mathcal{A}} f^{-}\,d\lambda^{\diamond}=\int_{\mathcal{A}} f\,d\lambda^{\diamond}\]
whenever they are integrable.

\section{Integral of more general functions}

We close with a brief comment on the possibility of extending our definition of integral to a class of localic functions broader than the simple ones.

Let $L$ be a $\sigma$-frame and let $\mu$ be a measure on $\SL(L)$.
In Corollary \ref{DecompositionNonnegativeMeasFunction}, we proved that if $f\in\eF(L)$ is a nonnegative function measurable on $L$, then
\[f=\lim_{n\rightarrow+\infty} g_n=\sup_{n\in\N} g_n,\]
for some increasing sequence $(g_n)_{n\in\N}$ in $\SM(\Cong(L))$ with $\pmb{0}\leq g_n\leq f$ for all $n\in\N$. This suggests that just as we can approximate $f$ by a sequence of simple functions, we can also approximate the integral of $f$ by the integral of simple functions. This motivates the following definition.

\begin{Definition}[\bf{Integral of a nonnegative function}]\label{Integral_nonnegative}
Given a nonnegative $f\in\eF(L)$, the {\em $\mu$-integral of $f$ over $S\in\SL(L)$} is given by
\[
\int_{S} f\,d\mu\coloneqq\sup\Bigl\{\int_{S} g\,d\mu\mid \pmb{0}\leq g\leq f, \,g\in\SM(\Cong(L))\Bigl\}.
\]
The $\mu$-integral of $f$ over $L= 1_{\SL(L)}$ is called the {\em $\mu$-integral of $f$} and we write
\[
\int f\,d\mu\coloneqq\int_{L} f\,d\mu=\sup\Bigl\{\int g\,d\mu\mid \pmb{0}\leq g\leq f, \,g\in\SM(\Cong(L))\Bigl\}.
\]
\end{Definition}

As $f=f^{+}-f^{-}$ for any  $f\in\eF(L)$, we can extend the definition of integral of a nonnegative function to a general function (similarly as in Section \ref{Integral_Simple_Section}).

\begin{Definition}[\bf{Integral of a general function}]\label{Integral}
A function $f\in\eF(L)$ is {\em $\mu$-integrable over $S\in\SL(L)$} if
\[
\int_{S} f^{+}\,d\mu<\infty\qquad\textrm{ or }\qquad\int_{S} f^{-}\,d\mu<\infty,
\]
and its {\em $\mu$-integral over $S$} is given by
\[\int_{S} f\,d\mu\coloneqq\int_{S} f^{+}\,d\mu-\int_{S} f^{-}\,d\mu.\]
The $\mu$-integral of $f$ over $L= 1_{\SL(L)}$ is called the {\em $\mu$-integral of $f$}.
\end{Definition}

It is straightforward to see that Definition \ref{Integral} extends Definition \ref{Integral_nonnegative}, and they both extend Definitions \ref{Integral_Simple_nonnegative} and \ref{Integral_Simple}.
A detailed study of these integrals is left out to a subsequent paper \cite{Bernardes_inpreparation}.

Here, we only point out that some of the elementary properties of the integral of an $f\in\eF(L)$ may not look as neat as in the case of an $f\in\SM(\Cong(L))$. For instance, the indefinite integral of a nonnegative $f\in\eF(L)$ might not necessarily be a measure on $\SL(L)$ because the modularity (axiom (M3)) may fail. Even so, this definition generalises the classic Lebesgue integral in the sense that given a measurable function $\tilde{f}\colon X\to\eR$ on a measure space $(X,\mathcal{A},\lambda)$, the $\lambda$-integral of $\tilde{f}$ is equal to the $\lambda^{\diamond}$-integral of the localic counterpart of $\tilde{f}$ for any measure $\lambda^{\diamond}$ on $\SL(L)$ extending $\lambda$.

\begin{Remark}
Since we work with the frame of reals, which is defined in terms of the ordered field of rationals, we defined a localic simple function using rational scalars (and not real scalars as it is standard in the classical theory). However, this leads to no loss of generality in the proposed point-free integration theory, as we show in \cite{Bernardes_inpreparation}.
\end{Remark}

\subsection*{Acknowledgments} Research partially supported by the Centre for Mathematics of the University of Coimbra (funded
by the Portuguese Government through FCT/MCTES, DOI: 10.54499/UIDB/00324/2020) and by
a PhD grant from FCT/MCTES (PD/BD/11883/2022). The author gratefully acknowledges her PhD
supervisor, Prof. Jorge Picado, for his advice and all suggestions that improved the presentation
of this paper.

\bibliographystyle{amsplain}
\bibliography{Lebesgue_integration_I.bib}

\end{document}